\documentclass[12 pt]{amsart}
  \usepackage[utf8]{inputenc}
  \usepackage[margin=3cm]{geometry}
\makeatletter
\def\l@subsection{\@tocline{1}{0pt}{2pc}{1pc}{}}
\def\l@subsubsection{\@tocline{2}{0pt}{2pc}{1pc}{}}
\makeatother

  \usepackage{stmaryrd}
\usepackage{amsmath,amssymb,amsthm, mathrsfs, mathtools, bbold, mathbbol}
\usepackage{amscd,mathtools}
\usepackage{comment}
  \usepackage{geometry}
  \usepackage{graphicx,epstopdf}
  \usepackage{color,xcolor}
  \usepackage{enumerate}
  \usepackage{xspace}
  \usepackage{tipa}
  \usepackage[user,titleref]{zref}
 \usepackage{soul}
 \usepackage{xcolor}
  \usepackage{epsfig}

\usepackage{tikz}
\usetikzlibrary{shapes.geometric}
\usetikzlibrary{angles}

\usepackage{caption, subcaption}
\usepackage{tikz-cd}
\usetikzlibrary{calc,decorations.pathreplacing}
\usepackage{tikz}
\usetikzlibrary{calc}
\usetikzlibrary{arrows}
\usetikzlibrary{decorations.pathreplacing}
\usetikzlibrary{intersections}
\usepackage{xcolor}
\usepackage{xspace}
\usepackage{todonotes}

\usepackage{mathrsfs}
\usetikzlibrary{matrix}
\usetikzlibrary{positioning}
\DeclareMathAlphabet{\pazocal}{OMS}{zplm}{m}{n}
\tikzset{>=stealth}

  \usepackage{hyperref}

  \hypersetup{
    colorlinks=true,
    linkcolor=blue,
    filecolor=magenta,      
    urlcolor=cyan,
    citecolor=purple,
    pdftitle={Overleaf Example},
    pdfpagemode=FullScreen,
    }

\usepackage{float}

  \newcommand{\calA}{\mathcal{A}}

  \newcommand{\calN}{\mathcal{N}}

  \newcommand{\calU}{\mathcal{U}}
    \newcommand{\calV}{\mathcal{V}}

  \newcommand{\NN}{\mathbb{N}}

  \newcommand{\RR}{\mathbb{R}}

  \newcommand{\ZZ}{\mathbb{Z}}

  \newcommand{\bfa}{\textbf{a}}
  \newcommand{\bfb}{\textbf{b}}

  \newcommand{\gothic}{\mathfrak}

  \newcommand{\go}{{\gothic o}}

  \newtheorem{theorem}{Theorem}[section]
  \newtheorem{proposition}[theorem]{Proposition}
  \newtheorem{corollary}[theorem]{Corollary}
  \newtheorem{lemma}[theorem]{Lemma}

  \newtheorem{question}[theorem]{Question}
  \newtheorem{claim}[theorem]{Claim}
  
  \newtheorem*{claim*}{Claim}

  \newtheorem{introthm}{Theorem}

  \theoremstyle{definition}
  \newtheorem{definition}[theorem]{Definition}

  \newtheorem*{question*}{Question}
  \newtheorem*{answer*}{Answer}
  \newtheorem*{application*}{Application}

  \theoremstyle{remark}
  \newtheorem{remark}[theorem]{Remark}
  \newtheorem*{remark*}{Remark}
  
  \newcommand{\thmref}[1]{Theorem~\ref{#1}}
  \newcommand{\corref}[1]{Corollary~\ref{#1}}

  \newcommand{\pka}{\partial_{\kappa}}

  \newcommand{\sC}{{\sf C}}   
  \newcommand{\sD}{{\sf D}}

 \newcommand{\sK}{{\sf K}}

  \newcommand{\sQ}{{\sf Q}}   
  \newcommand{\sR}{{\sf R}}

  \renewcommand{\bfa}{{\sf a}}

  \newcommand{\kk}{{\sf k}}   
       
  \newcommand{\mm}{{\sf m}}   
  \newcommand{\nn}{{\sf n}}   
     
  \newcommand{\pp}{{\sf p}}       
  \newcommand{\qq}{{\sf q}}   
  \newcommand{\rr}{{\sf r}}

  \newcommand{\yy}{{\sf y}}

    \newcommand{\ps}{{\partial_s}}

\DeclareMathOperator{\diam}{diam}

\newcommand{\CAT}{\ensuremath{\operatorname{CAT}(0)}\xspace}      

  \newcommand{\param}{{\mathchoice{\mkern1mu\mbox{\raise2.2pt\hbox{$
  \centerdot$}}
  \mkern1mu}{\mkern1mu\mbox{\raise2.2pt\hbox{$\centerdot$}}\mkern1mu}{
  \mkern1.5mu\centerdot\mkern1.5mu}{\mkern1.5mu\centerdot\mkern1.5mu}}}

\DeclarePairedDelimiterX{\norm}[1]{\lvert}{\rvert}{#1}
\DeclarePairedDelimiterX{\Norm}[1]{\lVert}{\rVert}{#1}

  \newcommand{\ST}{\mathbin{\Big|}} 
  \newcommand{\from}{\colon\thinspace} 
  \newcommand{\ep}{\epsilon}

\newcommand{\p}{\partial}

\title[First passage percolation preserves sublinearly Morse boundaries]{First passage percolation preserves sublinearly Morse boundaries}
\author{Sagnik Jana}
 \address{Department of Mathematics,  University of Tennessee at Knoxville, Knoxville, TN, USA}
\email{sjana1@vols.utk.edu}

  \author{Yulan Qing}
 \address{Department of Mathematics,  University of Tennessee at Knoxville, Knoxville, TN, USA}
 \email{yqing@utk.edu}

\begin{document}
\maketitle
\begin{abstract}
Sublinearly Morse directions in proper geodesic spaces are defined by sublinearly Morse stability. In this paper we offer an alternative  characterization for sublinearly Morse geodesic lines via middle recurrence. We then study first passage percolation (FPP) on proper geodesic graphs of bounded degree. We associate an i.i.d. collection of random passage times to each edge. Under suitable conditions on the passage time distribution,  we prove that sublinearly Morse boundaries are invariant under first passage percolation.
\end{abstract}

\section{Introduction}
 A sublinearly Morse boundary \cite{QRT1,QRT2} is a topological space consists of quasi-geodesic rays in a given metric space. It is constructed to extend the Gromov boundary beyond the hyperbolic setting. If the metric space in question, denoted $X$, is proper and geodesic, then the sublinearly Morse boundary, denoted $\ps X$,  is always quasi-isometrically invariant and metrizable. In many situations $\ps X$ provides a natural group-invariant topological model compatible with probabilistic boundary theory for random walks on $X$. A closely related topological set, the Morse boundary \cite{Cordes,CS} of metric spaces is often non-metrizable and does not always realize the Poisson boundary. These distinctions make the sublinearly Morse boundary particularly well-suited for studying asymptotic geometry alongside stochastic and dynamical phenomena. It is natural to ask how sublinearly Morse directions behave under random perturbations of the geometry. One natural model for such perturbations is first passage percolation (FPP), which systematically introduce randomness to graphs. In this process, one assigns i.i.d.\ random weights to the edges, and then defines the distance between two vertices as the smallest total weight among all paths joining them. The path that achieves this minimum is called a geodesic (an optimal path)[see section \ref{fpp}]. For the Cayley graph of $\mathbb{Z}^2$ with respect to the standard generators, this model was introduced by Hammersley and Welsh \cite{HW65}. While first passage percolations on $\mathbb{Z}^2$ and, more generally, on $\mathbb{Z}^d$ has been studied extensively and proven to be hard. For instance, it is still unknown whether bi-infinite geodesics exist (see \cite{Kesten}) after first passage percolation on $\mathbb{Z}^2$. Much less is known for other metric spaces until recently. Existence of bi-infinite geodesics and other interesting results \cite{BZ12,BT17} were obtained when the graph is Gromov hyperbolic.
 
Indeed, \cite{BT17} shows that if $X$ contains a bi-infinite Morse geodesic line, then $X_\omega$ contains a bi-infinite geodesic line almost surely. However, one surprise is that  \cite{BJQ} demonstrates that FPP does not, in general, preserve the Morseness of geodesic lines (a property all geodesic lines in hyperbolic spaces enjoy). This motivates the authors of this paper to study the problem in a setting that contains more hyperbolic behavior than $\ZZ^d$ but less than that of a hyperbolic graph. Specifically, we are interested in understand what happends to \emph{sublinear Morse} directions: recently \cite{JanaQing1} establishes that if $X$ has a bi-infinite sublinearly Morse geodesic line, then $X_\omega$ contains a bi-infinite geodesic line almost surely. Therefore, the immediate motivating question for this paper is:
\begin{question}
Are sublinearly Morseness of sets almost surely preserved under first  passage percolations? 
\end{question}

We answer the question positively:
\begin{introthm}\label{introthm1}
Let $X$ be a graph with uniformly bounded degree and $\p_s X \neq \emptyset$.  Assume that the edge length distribution has finite expectation and also $\nu(\{0\}) = 0$. Then let $\omega$ be induced graphs with weight functions. Then for almost every $\omega$, such the first passage percolation induces a homeomorphism on the sublinearly Morse boundaries: $\p_s X \simeq \p_s X_\omega$.
\end{introthm}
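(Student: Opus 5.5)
The plan is to realize the homeomorphism as the identity on vertices. Since $\p_s X$ is quasi-isometry invariant, it would suffice that $\mathrm{id}\colon X\to X_\omega$ were a quasi-isometry. In general it is not: long geodesics in $X$ can have $\omega$-length far from their combinatorial length. Only a sublinear version of quasi-isometry holds along the relevant directions, so the argument has to be run through the middle-recurrence characterization of sublinearly Morse geodesics promised in the abstract, which I will take as established earlier in the paper. That characterization says, roughly, that a quasi-geodesic ray $\gamma$ is $\kappa$-Morse if and only if for every $r$, any quasi-geodesic (or path of controlled length) with endpoints on $\gamma$ at distance $r$ must spend a definite proportion of its middle portion within $\kappa(r)$ of $\gamma$. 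Middle recurrence is phrased through path lengths and neighborhoods rather than all quasi-geodesics. It should therefore transfer between $X$ and $X_\omega$ as soon as lengths are comparable up to multiplicative constants at large scales.

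\textbf{Step 1: probabilistic length comparison.} Fix a base vertex $o$. I would show that almost surely there are constants $0<c\le C$ and a sublinear function $\epsilon(r)$ such that every path of combinatorial length $n\ge \epsilon(d(o,\cdot))$ starting in the ball $B(o,r)$ has $\omega$-length at least $cn$. The lower bound uses $\nu(\{0\})=0$ together with bounded degree. Choose $\delta$ with $\nu([0,\delta])$ small compared with $1/\Delta$, where $\Delta$ bounds the degree, so that the number of paths of length $n$ is at most $\Delta^n$. A Chernoff bound then shows that the probability that some path of length $n$ from a given vertex has fewer than $n/2$ edges of weight $>\delta$ decays exponentially in $n$. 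Summing over vertices in $B(o,r)$, whose number is at most $\Delta^r$, and applying Borel--Cantelli with $n\ge \epsilon(r)\gg \log$-scale in a suitable sense, I get the lower comparison. For the upper bound along a \emph{fixed} geodesic ray or line $\gamma$ in $X$, finite expectation and the strong law of large numbers give $|\gamma[0,t]|_\omega \le C t$ for large $t$. More generally, a union bound over the countably many rays in a countable dense family, or Borel--Cantelli along $\gamma$, yields $|\gamma[s,t]|_\omega\le C|t-s|+o(t)$.

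\textbf{Step 2: transfer of sublinear Morseness.} Let $\gamma$ be a $\kappa$-Morse geodesic ray in $X$. By Step 1, $\gamma$ is a $(C,o(\cdot))$ sublinear quasi-geodesic in $X_\omega$, so $\omega$-geodesics $\beta_n$ from $o$ to $\gamma(n)$ exist almost surely. Each $\beta_n$ has $\omega$-length at most $Cn$, hence combinatorial length at most $(C/c)n$. Middle recurrence of $\gamma$ in $X$ then forces $\beta_n$ to stay $\kappa'$-close to $\gamma$ at all scales. Arzel\`a--Ascoli gives a limiting $\omega$-geodesic ray $\beta$ sublinearly tracking $\gamma$. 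Running middle recurrence in the reverse direction shows that $\beta$ is $\kappa''$-Morse in $X_\omega$, after checking that paths in $X_\omega$ with controlled $\omega$-length have controlled combinatorial length. This defines the map $\Phi\colon\p_s X\to\p_s X_\omega$. I would construct the inverse symmetrically: a $\kappa$-Morse $\omega$-geodesic is tracked by $X$-geodesics, because $X$-lengths are bounded above by $\omega$-lengths divided by $c$.

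\textbf{Step 3: well-definedness, bijectivity, and continuity.} These come from the tracking estimates. Both $\Phi$ and its inverse preserve sublinear fellow-traveling, and the topology on $\p_s$ is defined by fellow-traveling up to radius $r$ within $\kappa(r)$. Continuity therefore follows once the constants in Step 2 are uniform on neighborhoods of a point.

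The main obstacle is the almost-sure statement holding \emph{simultaneously} for all (uncountably many) directions, while Step 1's upper bound was only obtained for fixed paths. I would handle this by using only the lower bound, which is uniform over all paths, together with the upper bound along countably many geodesics: for each $n$, a geodesic from $o$ to each vertex. Every sublinearly Morse direction is approximated by these, and the loss of uniform upper control has to be absorbed into the sublinear function $\kappa$.
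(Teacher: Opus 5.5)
Your outline follows the paper's route. It uses length comparison through Propositions~\ref{BT1} and~\ref{BT2}, and the middle-recurrence characterization (Propositions~\ref{prop3.3} and~\ref{prop3.4}) to show that the image of a $\kappa$-Morse ray is sublinearly Morse (Theorem~\ref{37}). It then takes a geodesic representative as a limit of $\omega$-geodesics $[\go,\gamma(n)]_\omega$, and uses tracking estimates for bijectivity and continuity. The step you flag as the main obstacle is a genuine gap, and your proposed repair does not close it.

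Step~2 needs $\ell_\omega(\beta_n)=d_\omega(\go,\gamma(n))\le Cn$, since this is what gives $\beta_n$ bounded slope in $X$, the input to middle recurrence. It needs this on a single full-measure set of $\omega$ for \emph{all} directions simultaneously. Proposition~\ref{BT1}, or the strong law, gives it only for one fixed path. A countable family of finite segments from $\go$ does not help. Each $\sigma$ satisfies $\ell_\omega(\sigma)\le 2b\,\ell(\sigma)+r_0(\sigma)$, but nothing makes $r_0(\sigma)$ uniform, and under a first-moment hypothesis it cannot be. Take the $3$-regular tree with $\nu(\omega_e>t)=t^{-2}$ for $t\ge 1$. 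Borel--Cantelli shows that almost surely, for all large $m$, every vertex at depth $m$ has a descendant edge at depth at most $2m$ of weight at least $2^{m/4}$. Chaining such edges produces a ray $\gamma$ with $d_\omega(\go,\gamma(n))/n$ unbounded. The loss is exponential, not sublinear, so it cannot be absorbed into $\kappa$. In this example the conclusion still holds, because $X_\omega$ is again a tree; it is the method that breaks.

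Your symmetric inverse has the same defect. Proving that an $\omega$-geodesic is sublinearly Morse in $X$ requires upper bounds on $\omega$-lengths of arbitrary $X$-paths. The paper applies Proposition~\ref{BT1} pointwise in the same way (Lemmas~\ref{neighborhood}, \ref{welldefined}, \ref{onto}), and Theorem~\ref{37} is a per-ray statement, so it does not supply this uniformity either. With bounded support, $\ell_\omega\le M\ell$ is deterministic and this particular obstruction disappears.

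Two further steps fail as written.

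First, the lower bound in Step~1 is false at sublinear scale. A union bound over $\approx\Delta^r$ starting points against a probability $e^{-\theta n}$ needs $n\gtrsim r$. On a regular tree, $B(\go,r)$ contains $\approx 2^{r-n}$ disjoint paths of length $n=o(r)$. If $\nu((0,\delta])>0$, one of them almost surely has $\omega$-length at most $\delta n$. So no $c>0$ works when $\nu$ charges every interval $(0,\delta]$. What is true is Proposition~\ref{BT2}: it covers only paths with $d(\go,\sigma)\le D\,\ell(\sigma)$, with a constant $c(D)$ decreasing in $D$. This is why the paper argues by contradiction with linearly escaping points, where every relevant path is long compared with its distance to $\go$.

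Second, middle recurrence yields only a single point of a bounded-slope path near the middle third. Bounded-slope paths can make linear excursions, so middle recurrence does not by itself keep $\beta_n$ $\kappa'$-close to $\gamma$. For that you need the excursion argument of Lemma~\ref{middle rec quasi geodesic} and Proposition~\ref{prop3.4}. That argument requires the excursion subpaths of $\beta_n$ to have bounded $X$-slope, i.e.\ a two-sided length comparison on those subpaths, which reintroduces both problems above.
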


This result opens up the possibility of analysis into the interaction between sublinearly Morse sets and first passage percolations. We give several natural and immediate questions that can be investigated.

Since first passage percolation does not preserve quasi-geodesics, this paper depends on characterizations of sublinear Morseness using paths of bounded slope. This notion is established for Morse geodesic rays and is utilized  in \cite{DMS} and improved in \cite{ADT}. We first develop an proper analogue of such a notion in the context of sublinear Morseness:
\begin{definition} \label{def:middle recurrent}
We say that a path (geodesic line or quasi-geodesic) $\gamma$ is \emph{middle third recurrent} if for every $C\geq1$ there is a constant $c$ (depends on $C$ and $\gamma$) such that the following holds: 
    for any path $p$ with endpoints $a,b \in \gamma$ satisfying $\ell(p)\leq Cd(a,b)$, intersects the $c\cdot \kappa$-neighborhood of the middle third of $\gamma[a,b]$ for some sublinear function $\kappa$. That is,
\[ p \cap \calN_{\kappa} (\gamma_{\frac 13[a,b]}, c )\neq \emptyset.\]
Where, $\gamma_{\frac{1}{3}[a,b]} := \biggl\{x\in \gamma : \min \{d(x,a),d(x,b)\} \ge \frac{1}{3}\cdot d(a,b)\biggr\}$.
\end{definition}
We are able to establish the path characterization of sublinearly Morse sets:

\begin{introthm}\label{introthm2}
A geodesic ray is sublinearly Morse if and only if it has the middle third recurrent property.

\end{introthm}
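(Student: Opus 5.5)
We prove the two directions separately, working with the standard definitions from \cite{QRT1,QRT2}. A geodesic ray $\gamma$ is $\kappa$-Morse if there is a function $m_\gamma$ with the following property: every $(q,Q)$-quasi-geodesic $\beta$ whose endpoints lie on $\gamma$ stays in $\calN_\kappa(\gamma, m_\gamma(q,Q))$. Here $\calN_\kappa(\gamma, c)=\{x : d(x,\gamma)\le c\,\kappa(\|x\|)\}$, with $\|x\|=d(\gamma(0),x)$. We also use the equivalent characterization that $\gamma$ is $\kappa$-Morse if and only if it is \emph{$\kappa$-weakly contracting/strongly contracting in the sublinear sense}. Concretely: there is a $c_0$ such that any ball $B$ disjoint from $\calN_\kappa(\gamma,c_0)$ has sublinear-size projection to $\gamma$, namely $\diam(\pi_\gamma(B))\le c_0\kappa(\|B\|)$ (the $\kappa$-contracting property of \cite{QRT2}).

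\textbf{Sublinearly Morse $\Rightarrow$ middle third recurrent.} Let $\gamma$ be $\kappa$-contracting, and let $p$ be a path from $a$ to $b$ with $\ell(p)\le C d(a,b)$. Suppose $p$ misses $\calN_\kappa(\gamma_{\frac13[a,b]},c)$ for a large $c$. Since $\pi_\gamma(a)=a$ and $\pi_\gamma(b)=b$, the projection of $p$ must pass over the middle third. Take the subpath $p'$ whose projection crosses the middle third, of projected length at least $d(a,b)/3$. Cover $p'$ by consecutive balls of radius comparable to $d(x,\gamma)$, each of which avoids $\calN_\kappa(\gamma,c_0)$. By contraction, each such ball moves the projection by at most $c_0\kappa(\cdot)$ while using up path length $\gtrsim d(x,\gamma)\ge c\,\kappa(\|x\|)$. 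Summing gives
\[ \frac{d(a,b)}{3}\le \sum \operatorname{proj.\ progress} \lesssim \frac{c_0}{c}\,\ell(p)\le \frac{c_0 C}{c} d(a,b). \]
This is a contradiction once $c>3c_0C$ (up to constants). The subtle point is comparing $\kappa(\|x\|)$ along $p'$ with $\kappa$ at the middle third. Points of $p$ satisfy $\|x\|\le \|a\|+Cd(a,b)$, and points in the middle third satisfy $\|y\|\ge d(a,b)/3$ roughly, so sublinearity (in the form $\kappa(t\lambda)\le \lambda\kappa(t)$ up to constants, for $\lambda\ge1$) absorbs the factor $C$ into $c$. Points of $p'$ that are close to $\gamma$ but project far from the middle third cannot occur: closeness forces their projection to be near them.

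\textbf{Middle third recurrent $\Rightarrow$ sublinearly Morse.} Let $\beta$ be a $(q,Q)$-quasi-geodesic with endpoints on $\gamma$. Suppose some $x\in\beta$ has $d(x,\gamma)=D\gg c\,\kappa(\|x\|)$; we derive a contradiction. Follow the standard ``divergence/recurrence'' argument of \cite{DMS,ADT}. Take the maximal subsegment $\beta[x_1,x_2]\ni x$ lying outside $\calN_\kappa(\gamma, c')$, and let $a,b\in\gamma$ be near-projections of its endpoints. Build the path
\[ p=[a,x_1]\cup\beta[x_1,x_2]\cup[x_2,b]. \]
Its length is bounded by $C\,d(a,b)$ for some $C=C(q,Q)$, provided $d(a,b)$ is not too small relative to $D$. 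Middle recurrence then forces $p$ to enter $\calN_\kappa(\gamma_{\frac13[a,b]},c)$. The two connecting segments are short and sit near $a$ and $b$, so the entry point lies on $\beta[x_1,x_2]$, contradicting its choice when $c'>c$. The remaining case is when $d(a,b)$ is small compared to $D$. In this case the quasi-geodesic inequality gives $\ell(\beta[x_1,x_2])\lesssim q\,d(x_1,x_2)+Q\lesssim \kappa$-size, contradicting $D$ large. An iteration over scales (as in \cite{QRT2}) makes the constant $m_\gamma(q,Q)$ explicit.

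The main obstacle is the first direction. Specifically, it is obtaining a uniform $c$ depending only on $C$, with the ball-covering/projection sum handled carefully while $\kappa$ varies along the path. I would first try to reduce to the $\kappa$-contracting characterization and use the ``sublinearly shrinking projection'' lemma from \cite{QRT2} directly on the excursion $p'$.
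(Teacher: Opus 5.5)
\textbf{Second direction (middle recurrence $\Rightarrow$ Morse).} Your argument is the paper's. Lemma~\ref{middle rec quasi geodesic} closes up an excursion of the quasi-geodesic outside $\calN_\kappa(\gamma,d)$ with two short segments, bounds the slope, and invokes middle recurrence. The connecting segments sit too close to $a,b$ to meet the neighbourhood of the middle third, so the hit lands on the excursion. The proof of Proposition~\ref{prop3.4} then handles short excursions with the quasi-geodesic length bound and Lemma~\ref{lemma2.3}.

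One correction is needed. The dichotomy must be taken at the $\kappa$-scale of the excursion's endpoints, not ``relative to $D$''. That is, use $d(a,b)\ge K c'\max\{\kappa(\|x_1\|),\kappa(\|x_2\|)\}$ for a fixed numerical $K$; the paper uses $12$.
\begin{itemize}
\item This choice makes the slope bound (the paper's $\frac{13q+2}{10}$) independent of $c'$. You can then take $c'=M(C)$ without circularity.
\item The lengths of the connecting segments are governed by $\kappa(\|x_1\|)$ and $\kappa(\|x_2\|)$, not by $\kappa(\|x\|)$. A threshold in terms of $D=d(x,\gamma)$ therefore does not control them.
\item In the short case, the $\kappa$-scale threshold gives $\beta[x_1,x_2]$ within $O(qc'\kappa(\|x_1\|)+Q)$ of $x_1$. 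Lemma~\ref{lemma2.3} converts this into a bound in $\kappa(\|x\|)$.
\end{itemize}
Your inequality $\ell(\beta[x_1,x_2])\le q\,d(x_1,x_2)+Q$ also presumes $\beta$ is tame, as in the paper's remark after Lemma~\ref{middle rec quasi geodesic}.

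\textbf{First direction (Morse $\Rightarrow$ middle recurrence).} Here the routes genuinely differ. The paper does not prove this direction; it imports Proposition~\ref{prop3.3} from \cite{JanaQing1}, with the neighbourhood taken for the contraction function. Your projection-counting argument is a self-contained substitute, of the same kind as the \cite{ADT} estimate quoted in Section~3. It works in outline and has the advantage of making $c$ explicit (roughly $6c_0C$, enlarged for a trivial case). Three points need tightening.
\begin{itemize}
\item[(i)] Theorem~\ref{A-sublinearlyequivalence} gives weak contraction only for some possibly different $\kappa'$. This is harmless, since the theorem only asks for some sublinear function, but your ``iff with the same $\kappa$'' is stronger than what is available.
\item[(ii)] Define $p'$ as the piece of $p$ between two specific points. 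Let $u$ be the last point with $d(u,\gamma)\le c\kappa(\|u\|)$ whose nearest-point projection lies before the middle third. Let $v$ be the first later such point projecting after the middle third.
\begin{itemize}
\item Such close points cannot project into the middle third, or $p$ would meet $\calN_\kappa(\gamma_{\frac13[a,b]},c)$. Nor can they project to both sides once $d(a,b)\gg\kappa$.
\item Hence $p'$ has no close interior points, and $d(\pi(u),\pi(v))\ge d(a,b)/3$.
\item Step along $p'$ with $d(x_i,x_{i+1})=d(x_i,\gamma)$. Each step costs length $>c\kappa(\|x_i\|)$ and moves the projection by at most $c_0\kappa(\|x_i\|)$. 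The ratio $c_0/c$ therefore needs no comparison of $\kappa$-values, contrary to the ``subtle point'' you flag.
\end{itemize}
\item[(iii)] What does need care are the additive end terms, of size $O\bigl(c_0\kappa(\|a\|+Cd(a,b))\bigr)$. For the contradiction you need $d(a,b)\ge K\kappa(\|a\|)$ with $d(a,b)$ large. The case $d(a,b)<K\kappa(\|a\|)$ must be treated separately. There $p\subset B(a,Cd(a,b))$, so after enlarging $c$, Lemma~\ref{lemma2.3} places all of $p$ in $\calN_\kappa(\gamma_{\frac13[a,b]},c)$.
\end{itemize}
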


We remark that it is communicated to the authors during the preparation of this document that one of the directions of this characterization, Proposition 3.4, is also in the process of being established by Raghunath
Jaganathan Vennila as part of his research project.

\subsection*{History in the boundary construction}
The sublinearly Morse boundary is a geometric boundary construction generalizing the Gromov boundary. It is always quasi-isometrically invariant, metrizable, and often provides a group-invariant topological model for suitable random walks. By contrast, the Morse boundary, constructed in \cite{Cordes,CS}, frequently fails to be metrizable and does not always realize the Poisson boundary.

Recent work also highlights strong genericity properties for sublinearly Morse directions. In \cite{GQR22}, genericity with respect to the Patterson–Sullivan measure was established for actions admitting a strongly contracting element. The stationary measure results of \cite{GQR22} were later claimed in a different context by Choi \cite{Choi} using a pivoting method of Gou\"ezel \cite{Gou22}. Following \cite{wenyuan}, genericity on the horofunction boundary was proved for all proper statistically convex-cocompact actions \cite{QY24}. 

\subsection*{Further investigations}
There are several immediate directions of interest as a result of this study. We record them here and offer also existing related results that may be useful in pursuing these questions.
\begin{enumerate}
\item \textit{Are hyperbolic graphs mapped to hyperbolic graphs by first passage percolation if the support is finite?}

 A recent preprint by Dominic Bair and the authors of this paper \cite{BJQ} shows that, assuming the passage time distribution have infinite support, then the hyperbolicity of the graphs are almost surely not preserved. We ask further if the infinite support assumption can be removed or weakened.
\item \textit{
Does first passage percolation induced a H\" older map on the associated sublinearly Morse boundaries?}

Theorem~\ref{introthm1} establishes that a first passage percolation of suitable assumption induced a homeomorphism between the sublinearly Morse boundaries. Given that sublinearly Morse boundaries are metrizable topological spaces, we ask if stronger observations can be made. It is established in \cite{DV15} that radial maps induced H\"older map on Gromov boundaries and \cite{BJQ} showed that first passage percolation exhibits certain radial property. H\"older equivalence is a relation on metric spaces and sublinearly Morse boundaries are known to be metrizable via topological arguments. In an upcoming preprint by Manisha Garg and the authors of this paper, we construct a visual metric for the sublinearly Morse boundary. Thus this question can be now investigated under this specific metric.

\item \textit{Do FPP velocity exist along sublinearly Morse directions?} Even though the image of Morse geodesic rays are not Morse under FPP, in \cite{BM1, BM2}, the authors still studied a great deal of probabilistic properties of the Morse geodesics after FPP. They established that velocity and coalescence exists, among other results. 
\end{enumerate}
\subsection{Acknowledgement} The second-named author would like to thank Gabriel Pallier and Romain Tessera for helpful discussions. The second-named author is supported by Simons Foundation grant [SFI-MPS-TSM-00014066, Y.Q.]

\section{preliminaries}

\subsection{Quasi-isometry and quasi-isometric embeddings}

\begin{definition}[Quasi Isometric embedding] \label{Def:Quasi-Isometry} 
Let $(X , d_X)$ and $(Y , d_Y)$ be metric spaces. For constants $\kk \geq 1$ and
$\sK \geq 0$, we say a map $f \from X \to Y$ is a 
$(\kk, \sK)$--\textit{quasi-isometric embedding} if, for all points $x_1, x_2 \in X$
$$
\frac{1}{\kk} d_X (x_1, x_2) - \sK  \leq d_Y \big(f (x_1), f (x_2)\big) 
   \leq \kk \, d_X (x_1, x_2) + \sK.
$$
If, in addition, every point in $Y$ lies in the $\sK$--neighbourhood of the image of 
$f$, then $f$ is called a $(\kk, \sK)$--quasi-isometry. When such a map exists, $X$ 
and $Y$ are said to be \textit{quasi-isometric}. 
\end{definition}

A \emph{geodesic ray} in $X$ is an isometric embedding $\beta \from [0, \infty) \to X$. We fix a basepoint $\go \in X$ and always assume 
that $\beta(0) = \go$, that is, a geodesic ray is always assumed to start from 
this fixed basepoint. 
\begin{definition}[Quasi-geodesics] \label{Def:Quadi-Geodesic} 
In this paper, a \emph{quasi-geodesic ray} is a continuous quasi-isometric 
embedding $\beta \from [0, \infty) \to X$  starting from the basepoint $\go$. 
\end{definition}
The additional assumption that quasi-geodesics are continuous is not necessary for the results in this paper to hold,
but it is added for convenience and to make the exposition simpler. 

If $\beta \from [0,\infty) \to X$ is a $(\qq, \sQ)$--quasi-isometric embedding, 
and $f \from X \to Y$ is a $(\kk, \sK)$--quasi-isometry then the composition 
$f \circ \beta \from [t_{1}, t_{2}] \to Y$ is a quasi-isometric embedding, but it may 
not be continuous. However, one can adjust the map slightly to make it continuous 
(see Definition 2.2  \cite{QRT1}) such that $f \circ \beta$ is a $(\kk\qq, 2(\kk\qq + \kk \sQ + \sK))$--quasi-geodesic ray.  

Similar to above, a \emph{geodesic segment} is an isometric embedding 
$\alpha \from [t_{1}, t_{2}] \to X$ and a \emph{quasi-geodesic segment} is a continuous 
quasi-isometric embedding \[\alpha \from [t_{1}, t_{2}] \to X.\] 

\noindent \textbf{Notation}. 
Let $\beta$ be a quasi-geodesic ray. Define 
\[
\Norm{x} : = d(\go, x).
\]
For $\rr>0$, let $t_\rr$ be the first time where $\Norm{\beta(t)}=\rr$ and define:
\begin{equation}\label{notation}
\beta_\rr := \beta(t_\rr)
\qquad\text{and}\qquad
\beta|_{\rr} : = \beta{[0,t_\rr]} 
\end{equation}
which are points and segments in $X$, respectively. 

\subsubsection{Sublinearly Morse geodesic lines} \hfill

Let $\kappa \from [0, \infty) \to [1, \infty)$ be a sublinear function that is monotone increasing and concave. That is
\[
\lim_{t \to \infty} \frac{\kappa(t)}{t} = 0. \label{subfunction}
\]

The assumption that $\kappa$ is increasing and concave makes certain arguments
cleaner; otherwise, they are not really needed. One can always replace any 
sub-linear function $\kappa$, with another sub-linear function $\overline \kappa$
so that \[\kappa(t) \leq \overline \kappa(t) \leq \sC \, \kappa(t)\] for some constant $\sC$ 
 and $\overline \kappa$ is monotone increasing and concave.

\begin{lemma} \label{lemma2.3}
Suppose $\kappa$ is any sublinear function. For any $D>0$, there exists $D_1$, $D_2 > 0$ depending on $D$ and $\kappa$ such that, for $x, y \in X,$
if $d(x, y) \leq D\cdot max \{\kappa(\Norm x),\kappa(\Norm y)\}$ then $ D_1\cdot \kappa(\Norm{x}) \leq \kappa (\Norm{y})\leq D_2\cdot \kappa (\Norm x).$
\end{lemma}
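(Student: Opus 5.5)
The plan is to derive both inequalities from the triangle inequality for $\Norm{\cdot}$ together with three elementary facts about $\kappa$. As the paper allows, I replace $\kappa$ by a monotone increasing concave comparable function $\overline\kappa$; this changes $D$ and the final constants by at most the factor $\sC$. For a concave $\kappa\from[0,\infty)\to[1,\infty)$ I will use:
\begin{enumerate}
\item[(i)] $\kappa$ is subadditive, $\kappa(s+t)\le\kappa(s)+\kappa(t)$, since $\kappa(0)\ge 0$;
\item[(ii)] $\kappa(\lambda t)\le\lambda\kappa(t)$ for $\lambda\ge1$, and $\kappa(s)\le\kappa(1)\,s$ for $s\ge1$, both because $\kappa(t)/t$ is nonincreasing;
\item[(iii)] by sublinearity there is $R=R(D,\kappa)$ with $D\kappa(s)\le s/2$ for all $s\ge R$.
\end{enumerate}
By symmetry of the hypothesis in $x$ and $y$, it suffices to prove the upper bound $\kappa(\Norm y)\le D_2\,\kappa(\Norm x)$. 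The lower bound then follows by swapping $x$ and $y$, with $D_1=1/D_2$.

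\textbf{Case 1: the maximum is $\kappa(\Norm x)$.} Then $\Norm y\le\Norm x+D\kappa(\Norm x)$. Monotonicity and (i) give
\[
\kappa(\Norm y)\le\kappa(\Norm x)+\kappa\bigl(D\kappa(\Norm x)\bigr).
\]
By (ii) with $\lambda=\max\{1,D\}$, and then (ii) again using $\kappa(\Norm x)\ge1$,
\[
\kappa\bigl(D\kappa(\Norm x)\bigr)\le\max\{1,D\}\,\kappa\bigl(\kappa(\Norm x)\bigr)\le\max\{1,D\}\,\kappa(1)\,\kappa(\Norm x).
\]
Hence $\kappa(\Norm y)\le\bigl(1+\max\{1,D\}\kappa(1)\bigr)\kappa(\Norm x)$.

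\textbf{Case 2: the maximum is $\kappa(\Norm y)$.} Then $\Norm y\le\Norm x+D\kappa(\Norm y)$. If $\Norm y\le R$, then $\kappa(\Norm y)\le\kappa(R)\le\kappa(R)\,\kappa(\Norm x)$, because $\kappa\ge1$. If $\Norm y\ge R$, then (iii) gives $\Norm y\le\Norm x+\Norm y/2$, so $\Norm y\le2\Norm x$. By (ii), $\kappa(\Norm y)\le2\kappa(\Norm x)$.

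Taking $D_2$ to be the maximum of the constants from the two cases proves the upper bound.

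The only delicate point is Case 2, where the hypothesis bounds $d(x,y)$ in terms of $\kappa(\Norm y)$ itself, which makes it self-referential. It is resolved by absorbing $D\kappa(\Norm y)$ into $\Norm y/2$ via sublinearity, and by handling the bounded region $\Norm y\le R$ separately using the normalization $\kappa\ge1$. That normalization is also what makes the constants uniform near the basepoint $\go$.
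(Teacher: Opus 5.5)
Your proof is correct and is essentially the paper's argument: when $\kappa(\Norm{y})$ is the larger value, you absorb $D\kappa(\Norm{y})$ into $\Norm{y}/2$ via sublinearity to bound $\Norm{y}$ linearly by $\Norm{x}$, convert this to a bound on $\kappa$ by monotonicity and concavity (treating $\Norm{y}\le R$ separately with $\kappa\ge 1$, where the paper instead folds an additive constant into the multiplicative one), and obtain the lower bound by symmetry. Your Case 1 needs no work at all, since there $\kappa(\Norm{y})\le\max\{\kappa(\Norm{x}),\kappa(\Norm{y})\}=\kappa(\Norm{x})$ directly.
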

\begin{proof}
    The proof is identical to the proof of Lemma 3.2 \cite{QRT1}. Since $\kappa$ is sublinear, there exists a constant $R\geq0$ such that for every $t>0$, $\kappa(t) \leq \frac{t}{2D}+R$.
    If $\Norm x \leq \Norm y$ then,
    \[\Norm y- \Norm x \leq d(x,y)\leq  D \cdot \max \{\kappa(\Norm x),\kappa(\Norm y)\}  \leq \frac{1}{2} (\Norm x+ \Norm y)+2RD.\]
    Hence, $\Norm y \le 3 \Norm x+2RD.$ \newline \newline
    Therefore, \[\kappa(\Norm x) \le \kappa(\Norm y)\le (3+2RD) \cdot \kappa(\Norm x).\]
    Similarly for $\Norm x \leq \Norm y$,
    we have 
    \[(3+2RD)^{-1}\cdot \kappa(\Norm x) \le \kappa(\Norm y)\le  \kappa(\Norm x).\]
    Hence,
    \[(3+2RD)^{-1}\cdot \kappa(\Norm x) \le \kappa(\Norm y)\le (3+2RD) \cdot \kappa(\Norm x).\]
\end{proof}
\begin{definition}[$\kappa$--neighborhood]  \label{Def:Neighborhood} 
For a closed set $Z$ and a constant $\nn$ define the $(\kappa, \nn)$--neighbourhood 
of $Z$ to be 
\[
\calN_\kappa(Z, \nn) = \Big\{ x \in X \ST 
  d_X(x, Z) \leq  \nn \cdot \kappa(x)  \Big\}.
\]

\begin{figure}[h]
\begin{tikzpicture}
 \tikzstyle{vertex} =[circle,draw,fill=black,thick, inner sep=0pt,minimum size=.5 mm] 
[thick, 
    scale=1,
    vertex/.style={circle,draw,fill=black,thick,
                   inner sep=0pt,minimum size= .5 mm},
                  
      trans/.style={thick,->, shorten >=6pt,shorten <=6pt,>=stealth},
   ]

 \node[vertex] (a) at (0,0) {};
 \node at (-0.2,0) {$\go$};
 \node (b) at (10, 0) {};
 \node at (10.6, 0) {$b$};
 \node (c) at (6.7, 2) {};
 \node[vertex] (d) at (6.68,2) {};
 \node at (6.7, 2.4){$x$};
 \node[vertex] (e) at (6.68,0) {};
 \node at (6.7, -0.5){$x_{b}$};
 \draw [-,dashed](d)--(e);
 \draw [-,dashed](a)--(d);
 \draw [decorate,decoration={brace,amplitude=10pt},xshift=0pt,yshift=0pt]
  (6.7,2) -- (6.7,0)  node [black,midway,xshift=0pt,yshift=0pt] {};

 \node at (7.8, 1.2){$\nn \cdot \kappa(x)$};
 \node at (3.6, 0.7){$||x||$};
 \draw [thick, ->](a)--(b);
 \path[thick, densely dotted](0,0.5) edge [bend left=12] (c);
\node at (1.4, 1.9){$(\kappa, \nn)$--neighbourhood of $b$};
\end{tikzpicture}
\caption{A $\kappa$-neighbourhood of a geodesic ray $b$ with multiplicative constant $\nn$.}
\end{figure}
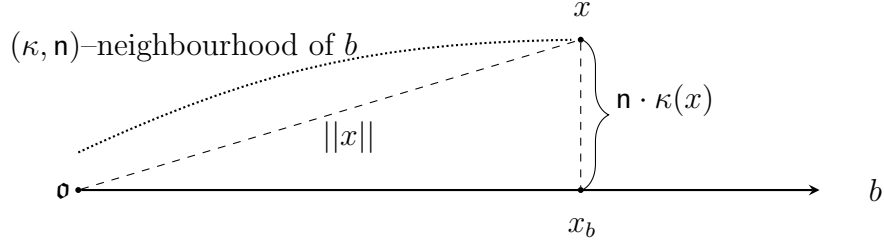
\end{definition}

\begin{definition} [$\kappa$-Morse I, $\kappa$-Morse II] \label{D:k-morse} \label{Def:Morse} 
Let $Z \subseteq X$ be a closed set, and let $\kappa$ be a concave sublinear function. 
We say that $Z$ is \emph{$\kappa$-Morse} if one of the following equivalent (see Proposition 3.10 \cite{QRT2})  condition holds:
\begin{enumerate}

\item[I.] (\emph{$\kappa$-strongly Morse}) There exists a proper function 
$\mm_Z : \mathbb{R}^2 \to \mathbb{R}$ such that for any sublinear function $\kappa'$ 
and for any $r > 0$, there exists $R$ such that for any $(\qq, \sQ)$-quasi-geodesic ray $\beta$
with $\mm_Z(\qq, \sQ)$ small compared to $r$, if 
$$d_X(\beta_R, Z) \leq \kappa'(R)
\qquad\text{then}\qquad
\beta|_r \subset \calN_\kappa \big(Z, \mm_Z(\qq, \sQ)\big)$$

\item[II.](\emph{$\kappa$-weakly Morse})  There is a function $\mm'_Z \from \RR_+^2 \to \RR_+$ so that if $\beta \from [s,t] \to X$ is a $(\qq, \sQ)$--quasi-geodesic with end points 
on $Z$ then
\[
[s,t]_{\beta}  \subset \calN_{\kappa} \big(Z,  \mm'_Z(\qq, \sQ)\big). 
\]
\end{enumerate}
\end{definition} 

\begin{remark}
By taking the maximum function of $\mm_Z, \mm'_Z,$ we may and will always assume that both conditions hold for the same $\mm_Z$, which we refer to as the $\kappa$-Morse gauge. Further,
\begin{equation} 
\mm_Z(\qq, \sQ) \geq \max(\qq, \sQ). 
\end{equation} 
\end{remark}

\begin{definition} \label{weakprojection}
Let $(X, d_X)$ be a proper geodesic metric space and $Z \subseteq X$ a closed subset, and let 
$\kappa$ be a concave sublinear function. A map $\pi_{Z} \from X \to \mathcal{P}(Z)$ is a $\kappa$-\emph{projection}  if there exist constants $D_{1}, D_{2}$, depending only on $Z$ and $\kappa$, such that for any points $x \in X$ and $z \in Z$, 
\[
\diam_X(\{z \} \cup \pi_{Z}(x)) \leq D_{1} \cdot d_X(x, z) + D_{2} \cdot \kappa(x).
\]
\end{definition}

A $\kappa$-projection differs from a nearest point projection by a uniform multiplicative error and a sublinear additive error. In particular, the nearest point projection is a $\kappa$-projection. Indeed, for $z \in Z$ and $w \in \pi_Z(x)$, we have
\[
d(z, w) \leq d(z, x) + d(x, w) \leq 2d(z, x).
\]
\begin{theorem}\label{A-sublinearlyequivalence}
Let $(X, \go)$ be a proper geodesic metric space with a fixed basepoint and let $\alpha$ be a quasi-geodesic
ray in $X$. Let $\pi$ be any $\kappa$-projection from $X$ to $\alpha$ in the sense of Definition~\ref{weakprojection}.
Then if $\alpha$ is $\kappa$-weakly Morse, then it is $\kappa'$-weakly contracting with respect to $\pi$ for some sublinear function $\kappa'$.
\end{theorem}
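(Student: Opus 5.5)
The plan is to argue by contradiction, following the strategy of the analogous result in \cite{QRT2}. Recall the target notion: $\alpha$ is $\kappa'$-weakly contracting with respect to $\pi$ if there is a constant $C_0$ such that for every $x\in X$ with $d(x,\alpha)\ge C_0\kappa'(x)$ we have
\[
\diam\bigl(\pi(B_x)\bigr)\le C_0\,\kappa'(x),
\qquad\text{where } B_x \text{ is the ball of radius } d(x,\alpha)/2 \text{ about } x.
\]

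\textbf{Step 1: a bad configuration.} Suppose the conclusion fails for $\kappa'=\kappa$ and every constant. Then there are points $x_n,y_n$ with $d(x_n,y_n)\le d(x_n,\alpha)/2$, $d(x_n,\alpha)\ge n\kappa(x_n)$, and
\[
d\bigl(\pi(x_n),\pi(y_n)\bigr)\ge n\,\kappa(x_n).
\]
Choose $a_n\in\pi(x_n)$ and $b_n\in\pi(y_n)$, and form the concatenation
\[
p_n=[a_n,x_n]\cup[x_n,y_n]\cup[y_n,b_n].
\]
Using the defining inequality of a $\kappa$-projection (Definition~\ref{weakprojection}) with $z$ a nearest point of $\alpha$, we get $d(x_n,a_n)\le (D_1+1)\,d(x_n,\alpha)+D_2\kappa(x_n)$, and the same bound for $y_n$. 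Hence all three pieces of $p_n$ have length comparable to $r_n:=d(x_n,\alpha)$.

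\textbf{Step 2: straighten into a quasi-geodesic.} The goal is a $(\qq,\sQ)$-quasi-geodesic with uniform constants whose endpoints lie on $\alpha$. If $d(a_n,b_n)\gtrsim r_n$, then $p_n$ has length at most a constant times $d(a_n,b_n)$. If $d(a_n,b_n)$ is small compared to $r_n$, the configuration is instead a short excursion; in that case I would replace the path by the single piece $[a_n,x_n]\cup[x_n,\alpha]$, or use the triangle $a_n,x_n,b_n$ directly. In either case, a standard taming argument (taking a subpath and shortcutting through points realizing distances) should produce a quasi-geodesic with uniform constants that passes within a bounded multiple of $r_n$ of the far point $x_n$. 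This step is the main obstacle: the projection is only coarse, so it is not automatic that the path is quasi-geodesic. I would first try to exploit the multiplicative bound $D_1$ together with the assumption $d(x_n,\alpha)\ge n\kappa(x_n)$ to control backtracking.

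\textbf{Step 3: contradiction and sublinearity.} The $\kappa$-weakly Morse property (condition II of Definition~\ref{D:k-morse}) then forces the point of the quasi-geodesic near $x_n$ to lie within $\mm_\alpha(\qq,\sQ)\,\kappa$ of $\alpha$. By Lemma~\ref{lemma2.3}, this gives
\[
r_n\lesssim \mm_\alpha(\qq,\sQ)\,\kappa(x_n),
\]
which contradicts $r_n\ge n\kappa(x_n)$ for $n$ large. If only a weaker bound survives because of the additive errors, I would enlarge the function to $\kappa'=C\kappa$, or to a concave majorant of $\kappa$ plus the projection error. This is still sublinear, and taking $\kappa'$ in this form is what makes the statement hold for some sublinear $\kappa'$.
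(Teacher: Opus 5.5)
The paper states this theorem without proof; it is quoted as background and not used later. So your proposal has to stand on its own, and as written it has two genuine gaps.

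\emph{First gap: the target is wrong.} Step 1 negates the conclusion for $\kappa'=\kappa$, and Step 3 falls back on $\kappa'=C\kappa$. That version is false in general. Already for bounded $\kappa$ (the Morse case) it would say Morse geodesics are strongly contracting, and Arzhantseva--Cashen--Gruber--Hume exhibit Morse geodesics that are only sublinearly contracting.

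Concretely, your bad sequence gives $d(a_n,b_n)\ge n\kappa(x_n)$ and $r_n\ge n\kappa(x_n)$, but no relation between $d(a_n,b_n)$ and $r_n$. Suppose $r_n\asymp\Norm{x_n}$ while $d(a_n,b_n)=o(r_n)$. Then none of your Step 2 fallbacks can work. A quasi-geodesic with endpoints on $\alpha$, constants independent of $n$, and a point at distance $\gtrsim r_n$ from $\alpha$ would, by condition II of Definition~\ref{D:k-morse} alone, force $r_n\lesssim\mm_\alpha(\qq,\sQ)\kappa(x_n)$, whatever the projections do. So in this regime there is nothing to contradict.

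What is missing is a dichotomy with a parameter $\ep>0$. Either $\diam(\pi(x)\cup\pi(y))\le\ep\, d(x,\alpha)\le\ep\Norm{x}$, or the projections spread by at least $\ep\, d(x,\alpha)$. In the second case, weak Morseness with constants depending on $\ep$, together with Definition~\ref{weakprojection} and Lemma~\ref{lemma2.3}, gives $\diam(\pi(x)\cup\pi(y))\le C\,\mm_\alpha(\qq_\ep,\sQ_\ep)\kappa(x)$. Then one takes
\[
\kappa'(t)=\inf_{\ep>0}\max\bigl\{\ep t,\ C\,\mm_\alpha(\qq_\ep,\sQ_\ep)\,\kappa(t)\bigr\}.
\]
This is sublinear, since for each fixed $\ep$ the second term is eventually below $\ep t$, but it is generally not $O(\kappa)$. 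That, not additive error, is why only some $\kappa'$ is obtained.

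\emph{Second gap: Step 2 fails as stated.} Even when $d(a_n,b_n)\ge\ep r_n$, the three-piece path need not be a quasi-geodesic. For an arbitrary $\kappa$-projection the statement itself fails under your definition of weak contraction. Here is an example:
\begin{itemize}
\item In $\mathbb{H}^2$, let $\alpha$ be a geodesic ray from $\go$, let $\alpha(t_x)$ be the nearest point to $x$, and put $\pi(x)=\alpha(t_x+d(x,\alpha))$.
\item Then $d(z,\pi(x))\le 3\,d(x,z)$ for every $z\in\alpha$, so $\pi$ satisfies Definition~\ref{weakprojection}, and $\alpha$ is $\kappa$-weakly Morse.
\item Take $y\in[x,\alpha(t_x)]$ with $d(x,y)=d(x,\alpha)/3$. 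Then $d(\pi(x),\pi(y))=d(x,\alpha)/3$, which is linear in $\Norm{x}$ when $t_x=d(x,\alpha)$.
\item In this example your path $p_n$ backtracks through $\alpha(t_x)$.
\end{itemize}
So any proof must use that $\pi$ is the closest-point projection, or sublinearly close to it.

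Even for the closest-point projection, a subpath from $u\in[a_n,x_n]$ to $v\in[y_n,b_n]$ can be long while $d(u,v)$ is small. Excluding that is essentially the contraction you are trying to prove. Shortcutting there can also pull the path back to $\alpha$ and lose the far point Step 3 needs. Note that Step 3 needs a point at distance $\gtrsim r_n$ from $\alpha$, not merely one within a bounded multiple of $r_n$ of $x_n$.

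What the path does satisfy, for the closest-point projection, is this: its slope is at most $3/\ep$, and it stays at distance $\gtrsim\ep r_n$ from the middle third of $\alpha$ between $a_n$ and $b_n$. This holds because $a_n$ and $b_n$ are closest points for every point of $[a_n,x_n]$ and $[y_n,b_n]$ respectively, and $[x_n,y_n]$ stays at distance at least $r_n/2$ from $\alpha$. The natural tool is therefore a bounded-slope (divergence-type) consequence of weak Morseness, proved directly from condition II. Proposition~\ref{prop3.3} is only quoted in this paper and is phrased via the contracting function, so you cannot borrow it without checking for circularity.
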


\begin{definition}[sublinear equivalence classes in $\pka X$] \label{Def:Fellow-Travel}
Let $\beta$ and $\gamma$ be two quasi-geodesic rays in $X$. If $\beta$ is in some 
$\kappa$--neighbourhood of $\gamma$ and $\gamma$ is in some 
$\kappa$--neighbourhood of $\beta$, we say that $\beta$ and $\gamma$ 
\emph{$\kappa$--fellow travel} each other. This defines an equivalence
relation on the set of quasi-geodesic rays in $X$ (to obtain transitivity, one needs to change $\nn$ of the associated $(\kappa, \nn)$--neighbourhood). 
\end{definition}

We denote the equivalence class 
that contains $\beta$ by $[\beta]$. We say two quasi-geodesic rays $\alpha, \beta$ \emph{sublinearly fellow-travel} if as $r \to \infty$, 
\[\lim_{r \to \infty} \frac{d(\alpha_r, \beta_r)}{r} = 0.\]

It is established in \cite{QRT2} that suppose $\alpha$ is a $\kappa$-Morse quasi-geodesic ray and suppose $\beta$ is a sublinearly Morse quasi-geodesic ray that sublinearly fellow travels $\alpha$ then $\beta$ $\kappa$-fellow travels $\alpha$. Thus it is a generalization of Gromov hyperbolicity in the sense that in these set of directions sublinear fellow traveling implies uniform sublinearly fellow travel. 

\begin{definition}[Sublinearly Morse boundary]
Let $\kappa$ be a sublinear function as specified in Section~\ref{subfunction} and let $X$ be a proper geodesic space.
\[\pka X : = \{ \text{ all } \kappa\text{-Morse quasi-geodesics } \} / \text{sublinear-fellow traveling}\]
\end{definition}

\subsubsection{A coarse cone topology on $\pka X$}\label{coarsetop}

 We equip $X$ with a topology which is 
a coarse version of the visual topology. In visual topology, if two geodesic rays fellow travel for a long time, then they are ``close''. In this coarse version, if two geodesic rays and all the quasi-geodesic rays in their respect equivalence classes remain close for a long time, then they are close. Now we define it formally. First, we say a quantity $\sD$ \emph{is small compared to a radius $\rr>0$} if 
\begin{equation} \label{Eq:Small} 
\sD \leq \frac{\rr}{2\kappa(\rr)}. 
\end{equation} 

\begin{definition}[Topology on $\pka X$]\label{openset}
Let $\bfa \in X$ and $\alpha_0 \in \bfa$ be 
the unique geodesic in the class $\bfa$. Define $\calU_{\kappa}(\bfa, \rr)$ to be the set of points $\bfb$ such that for any $(\qq, \sQ)$-quasi-geodesic of $\bfb$, denoted  $\beta$, 
such that $\mm_{\beta}(\qq, \sQ)$ is small compared to $\rr$, satisfies 
\[
\beta|_{\rr} \subset \calN_{\kappa}\big(\alpha_{0}, \mm_{\alpha_{0}}(\qq, \sQ)\big).\]

Let the topology of $\pka X$ be the topology induced by this neighbourhood system. The following fact shows that a $\kappa$-boundary is well defined with respect the associated group.
\end{definition}

\begin{figure}[H]
\begin{tikzpicture}[scale=0.5]
 \tikzstyle{vertex} =[circle,draw,fill=black,thick, inner sep=0pt,minimum size=.5 mm]
 
[thick, 
    scale=1,
    vertex/.style={circle,draw,fill=black,thick,
                   inner sep=0pt,minimum size= .5 mm},
                  
      trans/.style={thick,->, shorten >=6pt,shorten <=6pt,>=stealth},
   ]

  \node[vertex] (o) at (-10,0)  [label=left:$\go$] {}; 
  \node (o1) at (2, 5)[label=right:\color{red} geodesic $\beta_{0}$] {}; 
  \draw[thick, red]  (o) to [ bend right=20] (o1){};
  \node[vertex] (a) at (7,0)  [label=below:$\alpha_{0}$] {};
 \path[thick, densely dotted](-10,0.7) edge [bend left=12] (4, 3){};
 \node at (5.8, 2.5){$(\kappa, \mm_{\alpha_{0}}(\qq, \sQ))$-neighbourhood of $\alpha_{0}$};

       \node (a1) at (-1, 5) {}; 
   \node at (6,4) { $(\qq, \sQ)$--quasi-geodesic $\beta$};  
  \draw[dashed]  (-0.5, 5) to  [bend right=5] (-0.5, -1){};
  \node at (-0.5, -1){$\rr$};

   \draw[thick]  (o)--(a){};

  \pgfsetlinewidth{1pt}
  \pgfsetplottension{.75}
  \pgfplothandlercurveto
  \pgfplotstreamstart
  \pgfplotstreampoint{\pgfpoint{-10cm}{0cm}}  
  \pgfplotstreampoint{\pgfpoint{-9cm}{1cm}}   
  \pgfplotstreampoint{\pgfpoint{-8cm}{0.5cm}}
  \pgfplotstreampoint{\pgfpoint{-7cm}{1.5cm}}
  \pgfplotstreampoint{\pgfpoint{-6cm}{1cm}}
  \pgfplotstreampoint{\pgfpoint{-5cm}{2cm}}
  \pgfplotstreampoint{\pgfpoint{-4cm}{1.1cm}}
  \pgfplotstreampoint{\pgfpoint{-3cm}{1cm}}
  \pgfplotstreampoint{\pgfpoint{-2cm}{1.5cm}}
  \pgfplotstreampoint{\pgfpoint{-1cm}{1cm}}
    \pgfplotstreampoint{\pgfpoint{-0.5cm}{2cm}}
      \pgfplotstreampoint{\pgfpoint{-0.1cm}{3cm}}
        \pgfplotstreampoint{\pgfpoint{1cm}{4cm}}
          \pgfplotstreampoint{\pgfpoint{2cm}{4cm}}
  \pgfplotstreamend 
  \pgfusepath{stroke}
       
  \end{tikzpicture}
 
  \caption{ $\bfb \in \calU_{\kappa}(\bfa, \rr)$ because the quasi-geodesics of $\bfb$ such as $\beta, \beta_{0}$ stay inside the associated $(\kappa, \mm_{\alpha_{0}}(\qq, \sQ))$-neighborhood of $\alpha_{0}$ (as in Definition~\ref{Def:Neighborhood}), up to distance $\rr$. }
 \end{figure}

\subsection{First passage percolation} \label{fpp}
 We consider a connected non-oriented graph $X$, whose set of vertices (resp. edges) is denoted by $V$ (resp. E). For every function $\omega: E \rightarrow(0, \infty)$, we equip $V$ with the weighted graph metric $d_{\omega}$, where each edge $e$ has weight $\omega(e)$. In other words, for every $v_{1}, v_{2} \in V, d_{\omega}\left(v_{1}, v_{2}\right)$ is defined as the infimum over all path $\gamma=\left(e_{1}, \ldots, e_{m}\right)$ joining $v_{1}$ to $v_{2}$ of $|\gamma|_{\omega}:=\sum_{i=1}^{m} \omega\left(e_{i}\right)$. Observe that the graph metric on $V$ corresponds to the case where $\omega$ is constant equal to 1 , we shall simply denote it by $d$. We consider a probability measure on the set of all weight functions $\omega$. We let $\nu$ be a probability measure supported on $[0, \infty)$. Our model consists in choosing independently at random the weights $\omega(e)$ according to $\nu$. More formally, we equip the space $\Omega=[0, \infty)^{E}$ with the product probability that we denote by $P$. In this project we always assume that $\nu({0})=0$, but generally we do not assume that the support of $\nu$ is bounded away from zero.

 Let $d(\gamma, \go)$ denote the distance between $o$ and the set of vertices $\{\gamma(0), \gamma(1), \ldots\}$. That is 
 \[d(\gamma, \go) : = \inf\bigg\{d(v, \go) : v \in \{\gamma(0), \gamma(1), \ldots\}  \bigg\} \]
The following two results (Proposition~\ref{BT1} and ~\ref{extension}) along with the lemma ~\ref{lemma2.13} established upper and lower bounds for how distance can be perturbed under first passage percolation. Proposition ~\ref{BT1} is established in \cite{BT17}. Proposition \ref{extension} is a generalization of  Lemma 2.5 in \cite{BT17}. 

\begin{proposition}\cite{BT17}\label{BT1}
Let $X$ be a connected graph, and let $\gamma$ be a self-avoiding path. Assume $0<b=\mathbb{E} \omega_{e}<\infty$. Then for a.e. $\omega$, there exists $r_{0}=r_{0}(\omega)$ such that for all $i \leq 0 \leq j$,

$$
\ell_{\omega}(\gamma([i, j]) \leq 2 b(j-i)+r_{0}
$$
\end{proposition}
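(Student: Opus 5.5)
We prove Proposition~\ref{BT1} by a Borel--Cantelli argument along the self-avoiding path $\gamma$, using only that the edge weights along $\gamma$ are i.i.d.\ with mean $b$. Write $\omega_k := \omega(e_k)$, where $e_k$ is the edge of $\gamma$ joining $\gamma(k-1)$ and $\gamma(k)$, for $k \in \ZZ$. Since $\gamma$ is self-avoiding, the edges $e_k$ are pairwise distinct. Hence $(\omega_k)_{k\in\ZZ}$ is an i.i.d.\ family with common law $\nu$ and mean $b$. For $i \le 0 \le j$ we have
\[
\ell_\omega(\gamma[i,j]) = \sum_{k=i+1}^{j}\omega_k = S^-_{|i|} + S^+_j,
\qquad S^+_j := \sum_{k=1}^{j}\omega_k,\quad S^-_m := \sum_{k=-m+1}^{0}\omega_k .
\]
Thus the statement reduces to a bound on two independent one-sided random walks with nonnegative increments.

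The key step is the strong law of large numbers, which applies because $\mathbb{E}\omega_e = b < \infty$. It gives $S^+_j/j \to b$ and $S^-_m/m \to b$ almost surely. Fix $\omega$ in this full-measure event. Then there is $N = N(\omega)$ such that $S^+_j \le 2bj$ for all $j \ge N$ and $S^-_m \le 2bm$ for all $m \ge N$; here we use $b > 0$, so that $2b > b$. Set
\[
r_0(\omega) := 2\max\Bigl\{\max_{j<N} S^+_j,\ \max_{m<N} S^-_m\Bigr\},
\]
which is a finite random quantity. For every $j \ge 0$ this gives $S^+_j \le 2bj + r_0/2$, and similarly $S^-_m \le 2bm + r_0/2$ for every $m \ge 0$. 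Adding the two bounds yields
\[
\ell_\omega(\gamma[i,j]) \le 2b\bigl(|i| + j\bigr) + r_0 = 2b(j-i) + r_0
\]
for all $i \le 0 \le j$, which is the claim.

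The only point needing care is that $r_0$ must be uniform in both endpoints at once. This is why we split at $0$ into two one-sided sums rather than controlling all windows $[i,j]$ directly. A bound uniform over arbitrary windows not containing $0$ would fail: a single large weight far out along $\gamma$ cannot be absorbed by a window-independent constant. The split turns the problem into two almost surely finite suprema, $\sup_j (S^+_j - 2bj)$ and $\sup_m (S^-_m - 2bm)$, whose finiteness is exactly what the SLLN provides. No moment assumption beyond $\mathbb{E}\omega_e < \infty$ is needed, and the hypothesis $\nu(\{0\}) = 0$ plays no role here.
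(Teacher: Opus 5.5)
Your argument is correct: splitting $\gamma[i,j]$ at $0$ into two one-sided sums of distinct, hence i.i.d., edge weights, applying the strong law of large numbers to each, and using $b>0$ so that the limit $b$ sits strictly below $2b$, yields a constant $r_0(\omega)$ that works uniformly in both endpoints. The paper gives no proof of its own and cites \cite{BT17}, where the statement is obtained by this same law-of-large-numbers argument.
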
 
\begin{lemma}[Lemma 2.4 \cite{BT17}] \label{lemma2.13} Let $X$ be an infinite connected graph with bounded degree and assume that $\nu({0})=0.$ There exists an increasing function $\alpha:(0,\infty)\to (0,1]$ such that $\lim_{t\to0}\alpha(t) = 0,$ and such that for all finite path $\gamma$ and all $\epsilon>0,$
$$\mathbb{P}\big(\ell_\omega(\gamma)\leq \epsilon\cdot \ell(\gamma)\big) \leq \alpha(\epsilon)^{\ell(\gamma)}.$$
\end{lemma}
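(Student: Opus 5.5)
The plan is a Chernoff (exponential Markov) bound applied to the i.i.d. edge weights along $\gamma$, followed by a check that the resulting rate function tends to $0$ as $\epsilon\to 0$.

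One caveat first. The estimate can only hold with $\ell(\gamma)$ counting \emph{distinct} edges. Take a path that traverses one edge back and forth $n$ times. Then $\mathbb{P}(\ell_\omega(\gamma)\le \epsilon n)=\nu([0,\epsilon])$, which does not decay in $n$. So I read the lemma, as in \cite{BT17}, for self-avoiding paths, or equivalently for paths whose edges are distinct. For such a path with edges $e_1,\dots,e_n$, where $n=\ell(\gamma)$, the weights $\omega(e_1),\dots,\omega(e_n)$ are i.i.d. with law $\nu$, and $\ell_\omega(\gamma)=\sum_{i=1}^n\omega(e_i)$.

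\textbf{Step 1 (Chernoff bound).} Let $\varphi(\lambda)=\mathbb{E}\,e^{-\lambda\omega_e}\in(0,1]$ for $\lambda\ge 0$. For every $\lambda\ge 0$, Markov's inequality and independence give
\[
\mathbb{P}\Big(\sum_{i=1}^n \omega(e_i)\le \epsilon n\Big)=\mathbb{P}\Big(e^{-\lambda\sum_i\omega(e_i)}\ge e^{-\lambda\epsilon n}\Big)\le e^{\lambda\epsilon n}\,\varphi(\lambda)^n=\big(e^{\lambda\epsilon}\varphi(\lambda)\big)^n .
\]
Define $\alpha(\epsilon):=\inf_{\lambda\ge 0}e^{\lambda\epsilon}\varphi(\lambda)$. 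Taking the infimum over $\lambda$ gives $\mathbb{P}(\ell_\omega(\gamma)\le\epsilon\,\ell(\gamma))\le\alpha(\epsilon)^{\ell(\gamma)}$.

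\textbf{Step 2 (properties of $\alpha$).}
\begin{itemize}
\item Choosing $\lambda=0$ shows $\alpha(\epsilon)\le 1$.
\item $\alpha$ is nondecreasing in $\epsilon$, since each function $\epsilon\mapsto e^{\lambda\epsilon}\varphi(\lambda)$ is, and an infimum of nondecreasing functions is nondecreasing.
\item $\alpha(\epsilon)>0$ because $\varphi(\lambda)>0$ for every $\lambda$. If strict monotonicity is wanted, replace $\alpha$ by $\min(1,\alpha(\epsilon)+\epsilon)$, which preserves both the bound and the limit.
\end{itemize}

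\textbf{Step 3 ($\alpha(\epsilon)\to0$).} This is the only point where $\nu(\{0\})=0$ enters. Choose $\lambda=\epsilon^{-1/2}$, which gives $\alpha(\epsilon)\le e^{\sqrt\epsilon}\,\varphi(\epsilon^{-1/2})$. Since $\omega_e>0$ almost surely, $e^{-\lambda\omega_e}\to 0$ almost surely as $\lambda\to\infty$. By dominated convergence (the integrands are bounded by $1$), $\varphi(\lambda)\to 0$. Hence $\alpha(\epsilon)\to0$ as $\epsilon\to0$.

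The main obstacle is conceptual rather than computational. One has to ensure that the edges of $\gamma$ carry independent weights, which is why the lemma must be restricted to self-avoiding paths (or stated with distinct-edge length). Bounded degree is not needed for this estimate itself. It matters only later, when the bound is summed over the at most $\Delta^{n}$ paths of length $n$: choosing $\epsilon$ small enough that $\Delta\,\alpha(\epsilon)<1$ makes that union bound summable, and this is how the lemma will be used in the Borel–Cantelli arguments.
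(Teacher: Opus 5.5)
Your argument is correct. There is no competing proof in the paper to compare it with: the paper states this lemma with a citation to \cite{BT17} and gives no proof. Your three steps give a complete, self-contained proof:
\begin{itemize}
\item the exponential Chebyshev bound $\mathbb{P}(\ell_\omega(\gamma)\le\epsilon\,\ell(\gamma))\le\big(e^{\lambda\epsilon}\varphi(\lambda)\big)^{\ell(\gamma)}$ for distinct edges;
\item the definition $\alpha(\epsilon)=\inf_{\lambda\ge0}e^{\lambda\epsilon}\varphi(\lambda)$;
\item the choice $\lambda=\epsilon^{-1/2}$ together with dominated convergence, which is the only place $\nu(\{0\})=0$ is used.
\end{itemize}
You are also right that bounded degree plays no role in this estimate. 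Your caveat is correct as well. For a path running back and forth $n$ times along one edge, the probability is $\nu([0,\epsilon])$, so the statement as written (``for all finite path'') fails unless paths are self-avoiding. The paper in fact adopts that convention in Section 3, where paths are embeddings of intervals, so your reading is the right one.

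There is one small slip in Step 2. The fact that $\varphi(\lambda)>0$ for every $\lambda$ does not imply $\inf_\lambda e^{\lambda\epsilon}\varphi(\lambda)>0$. On one hand, $e^{\lambda\epsilon}\varphi(\lambda)\ge\nu([0,\epsilon])$. On the other hand, if $\nu([0,\epsilon])=0$ then $e^{\lambda\epsilon}\varphi(\lambda)=\mathbb{E}\,e^{-\lambda(\omega_e-\epsilon)}\to0$. So your $\alpha(\epsilon)$ is zero exactly when $\nu([0,\epsilon])=0$, for example whenever the support of $\nu$ is bounded away from $0$ and $\epsilon$ is small. The replacement $\min(1,\alpha(\epsilon)+\epsilon)$ that you already propose fixes this. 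Present it as necessary to get values in $(0,1]$, not merely as an optional way to obtain strict monotonicity.
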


\begin{proposition}\label{extension}\label{BT2}
 Let $X$ be an infinite connected graph with bounded degree, and let $\go$ be some vertex of $X$. Assume $\nu(\{0\})=0$. Then for every constant $D$, there exists $c>0$ such that for a.e. $\omega$, there exists $r_{1}=r_{1}(\omega)$ such that for all finite path $\gamma$ such that $d(\gamma, o) \leq D \cdot \ell(\gamma)$, one has

\[ \ell_\omega(\gamma)\geq c(D) \cdot \ell(\gamma)-r_{1}\]
Furthermore, $c(\param)$ is an decreasing function.
\end{proposition}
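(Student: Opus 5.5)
The plan is to adapt the Borel--Cantelli argument behind Lemma 2.5 of \cite{BT17}, combining the exponential small-ball estimate of Lemma~\ref{lemma2.13} with a count of paths that are allowed to start farther from $\go$. Fix $D$. For integers $n\ge 1$, let $\Gamma_n$ be the set of finite paths $\gamma$ with $\ell(\gamma)=n$ and $d(\gamma,\go)\le Dn$. Let $\Delta$ be the uniform degree bound.

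\textbf{Counting.} Each $\gamma\in\Gamma_n$ contains a vertex within distance $Dn$ of $\go$, and that vertex sits at some index $k\in\{0,\dots,n\}$. The number of vertices in the ball $B(\go,Dn)$ is at most $\Delta^{Dn+1}$. From a chosen vertex and a chosen index $k$, there are at most $\Delta^{n}$ ways to extend forward and backward. Hence
\[
|\Gamma_n|\le (n+1)\,\Delta^{Dn+1}\,\Delta^{n}\le \Lambda^{n}
\]
for some constant $\Lambda=\Lambda(D,\Delta)$ and all $n\ge1$. The key point is that the growth is still exponential in $n$, with a base that depends on $D$.

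\textbf{Probability estimate.} By Lemma~\ref{lemma2.13}, for any $\epsilon>0$,
\[
\mathbb P\bigl(\exists\gamma\in\Gamma_n:\ \ell_\omega(\gamma)\le \epsilon n\bigr)\le \Lambda^n\alpha(\epsilon)^n .
\]
Since $\alpha(t)\to0$ as $t\to0$, we can choose $c=c(D)>0$ with $\alpha(c)\Lambda<1/2$. The sum over $n$ is then finite. Borel--Cantelli yields, for a.e.\ $\omega$, an $n_0(\omega)$ such that every $\gamma\in\Gamma_n$ with $n\ge n_0$ satisfies $\ell_\omega(\gamma)\ge c\,n$.

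\textbf{Absorbing short paths.} Set $r_1(\omega)=c\,n_0(\omega)$. For a path with $\ell(\gamma)<n_0$ we have $c\ell(\gamma)-r_1<0\le\ell_\omega(\gamma)$, so the inequality holds trivially. This gives $\ell_\omega(\gamma)\ge c(D)\ell(\gamma)-r_1$ for all $\gamma$ with $d(\gamma,\go)\le D\ell(\gamma)$.

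\textbf{Monotonicity of $c(\cdot)$.} For $D\le D'$ the class of admissible paths only grows, so any constant that works for $D'$ also works for $D$. We can therefore define $c(D):=\sup\{c: \text{the statement holds for } D\}$, or simply choose $\Lambda(D)$ increasing in $D$ and take $c(D)$ decreasing via $\alpha$. There is one subtlety: the exceptional null set and $r_1$ depend on $D$. Taking integer values of $D$ and a countable intersection handles this, as long as $r_1$ is allowed to depend on $D$, which is what the statement permits.

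\textbf{Main obstacle.} The only real step is the counting bound. Paths may start far from $\go$ but must touch $B(\go,D\ell)$, and one has to check that this still gives an exponential bound whose base is independent of $n$. With bounded degree this is routine, so I expect no serious difficulty beyond bookkeeping of the index $k$ and the factor $n+1$.
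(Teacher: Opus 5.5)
Your proof is correct and follows essentially the same route as the paper: a union bound over the exponentially many length-$n$ paths meeting $B(\go, Dn)$, combined with the small-ball estimate of Lemma~\ref{lemma2.13} and Borel--Cantelli. Your version is slightly cleaner than the paper's in three places: you count paths that merely touch the ball (via the index $k$) rather than paths that start in it, you choose $c$ directly from $\alpha(c)\Lambda<1/2$, and you make the absorption of short paths into $r_1$ explicit.
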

\begin{proof}
Let $q$ be an upper bound on the degree of $X$, for simplicity, suppose $q\geq 2$ and let $n\geq 1$ be some integer. Suppose $\gamma$ is a path satisfying $d(o,\gamma) \leq D \cdot \ell(\gamma) = Dn$. Then the number of vertices in the ball $B(\go, Dn)$ is, 
\[|B(\go, Dn)| \leq 1+q \cdot \displaystyle\sum_{i=1}^{Dn-1}(q-1)^{i}\leq \displaystyle\sum_{i=0}^{Dn-1}(q+1)^i \leq (q+1)^{Dn+1}.\]Now possible number of paths of length $n$, starting in the ball $B(\go,Dn)$ is \[P_{n,D} \leq q\cdot (q-1)^{n-1}\cdot (q+1)^{Dn+1} \leq (q+1)^{(D+1)n+1} \]
Suppose $\alpha(t)$ be the increasing function with $\lim_{t\to 0} \alpha(t)=0$ in the Lemma ~\ref{lemma2.13}. Now define a function $\beta:(0,1] \to (0,\infty)$ such that,
\[\beta(t):= \sup \{x \in(0,\infty) : \alpha(x) \leq t \}.\] 
The function  is well defined since for every $t$ there exists $\delta_t>0$ such that $\alpha(x) \leq t$ wherever $x \in [0,\delta_t]$. Therefore, $[0,\delta_t] \subset \{x \in(0,\infty) : \alpha(x) \leq t \}$. Moreover, $\beta$ is non-decreasing and right continuous. Let, $C(D)$ be a function defined as $$C(D)=\beta(\frac{1}{q^{D+1}})$$
As a function of $D$, $C(D)$ is decreasing. Moreover by definition, $\alpha(C(D)\leq \frac{1}{q^{D+2}}$. Let us consider the following event, 
\[
A_{i,n,D}: = \{ \text{there exists some path $\gamma_i$ of length $n$, with $d(o,\gamma_i) \leq Dn$, and $\ell_\omega(\gamma_i)\leq c(D)\cdot \ell(\gamma_i)$\}}.\] 
By Lemma ~\ref{lemma2.13} $\mathbb{P}(A_{i,n,d}) \leq \alpha(C(D))^n\leq \frac{1}{q^{Dn+n}}$. There are up to $(q+1)^{(D+2)n}$ possible number of paths. Hence, the probability of at least one of these paths satisfies this property is, 
\begin{align*}
\mathbb{P}\left(\bigcup_{i=1}^{(q+1)^{(D+1)n+1}} A_{i,n, D}\right) &\leq \sum_{i=1}^{(q+1)^{(D+1)n+1}} \mathbb{P}m(A_{i,n, D})\\
&\leq (q+1)^{(D+1)n+1} \cdot \frac{1}{(q+1)^{Dn+2n}} \\
&= \frac{1}{(q+1)^{n-1}}.
\end{align*}
Thus $ \sum_{n=1}^\infty \frac{1}{(q+1)^{n-1}} < \infty$. Therefore, by the Borel-Cantelli lemma, the probability that this event happens infinitely often is 0.
\end{proof}
\begin{corollary}
Let $X$ be an infinite connected graph with bounded degree, and let $o$ be some vertex of $X$. Assume $\nu(\{0\})=0$. Let $\gamma:[0,n]\rightarrow X$ be any finite self avoiding path of length $n$, such that $ d(\gamma, o) \leq D \cdot n$. Then the image of $\gamma$ in $X_\omega$, is a quasi-isometric image of $\gamma$ whose constants depends only on $\omega$, i.e. there exists constants $q(\omega),Q(\omega)$ such that,
\[
 \frac{1}{q} \cdot \ell(\gamma)-Q(\omega)\leq \ell_{\omega}(\gamma) \leq q \cdot \ell(\gamma)+Q(\omega)
\]

\end{corollary}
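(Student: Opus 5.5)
The plan is to combine the two one-sided estimates already available, Proposition~\ref{extension} for the lower bound and Proposition~\ref{BT1} for the upper bound, and then set
\[
q := \max\Big\{2b,\ \tfrac{1}{c(D)}\Big\}, \qquad Q(\omega) := \max\{r_0(\omega), r_1(\omega)\}.
\]
Here $b = \mathbb{E}\omega_e$, which is finite and positive since $\nu$ has finite expectation and $\nu(\{0\})=0$. The full-measure set of $\omega$ on which the corollary holds will be the intersection of the full-measure sets coming from the two propositions, so it still has probability one. (If $D$ is to vary, intersect over $D \in \mathbb{N}$ and use that $c(\param)$ is decreasing.)

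\textbf{Step 1 (lower bound).} Fix $D$ and let $\gamma$ be a finite path of length $n=\ell(\gamma)$ with $d(\gamma,\go)\le D n$. Proposition~\ref{extension} then applies verbatim. For a.e.\ $\omega$ it gives $\ell_\omega(\gamma)\ge c(D)\, n - r_1(\omega)$, and here $r_1$ is uniform over all such paths. Since $c(D) \ge 1/q$, we get $\ell_\omega(\gamma)\ge \frac1q \ell(\gamma) - Q(\omega)$.

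\textbf{Step 2 (upper bound).} A finite self-avoiding path $\gamma:[0,n]\to X$ extends to, or can be viewed as a sub-segment of, a self-avoiding path through the vertex $\gamma(0)$. After reparametrizing so that $0$ lies in the index range, Proposition~\ref{BT1} with $i=0$, $j=n$ gives $\ell_\omega(\gamma)\le 2b\,n + r_0(\omega) \le q\,\ell(\gamma)+Q(\omega)$. There are only countably many finite self-avoiding paths in a graph of bounded degree, so intersecting the full-measure events from Proposition~\ref{BT1} over all of them still leaves a full-measure set of $\omega$.

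\textbf{Main obstacle: uniformity of the additive constant in Step 2.} The threshold $r_0$ produced by Proposition~\ref{BT1} depends on the path, so countably many applications do not by themselves give a single $r_0(\omega)$ for all $\gamma$.

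What I would try first is to exploit the fact that the constant in Proposition~\ref{BT1} is attached to a path through a fixed vertex. Every $\gamma$ in question starts within distance $Dn$ of $\go$, so one would aim to absorb the dependence on the starting point into the linear term, giving an error of the form $\epsilon n + r_0'(\omega)$. If that absorption fails, the honest reading is this: the constants are random (they depend on $\omega$), and they are uniform in the sense needed later, namely along the specific fixed paths (such as the geodesics representing points of $\p_s X$) to which the corollary is applied. I would state the proof in that form: $q$ is deterministic, and $Q(\omega)$ is almost surely finite and independent of the segment length $n$.
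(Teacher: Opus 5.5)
Your argument is the paper's argument. The paper's proof is the single line setting $Q(\omega)=\max\{r_0(\omega),r_1(\omega)\}$ and $q=\max\{2b,1/c(D)\}$ and citing Propositions~\ref{BT1} and~\ref{extension}, which is exactly your choice of constants. Your Step~1 is fine, because Proposition~\ref{extension} is genuinely uniform over all paths with $d(\gamma,\go)\le D\,\ell(\gamma)$. The obstacle you isolate in Step~2 is real, and the paper's one-line proof passes over it. In Proposition~\ref{BT1} the path is fixed before $\omega$, so $r_0$ depends on $\gamma$ and is uniform only over the subsegments $\gamma([i,j])$, $i\le 0\le j$, of that one path. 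The corollary as written also omits the hypothesis $\mathbb{E}\omega_e<\infty$ that the slope $2b$ requires.

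Your proposed repair, absorbing the starting-point dependence into an error $\epsilon n+r_0'(\omega)$, cannot work: the uniform upper bound is false under only a finite first moment. Take $X$ to be the $3$-regular tree and $\nu((t,\infty))=t^{-2}$ for $t\ge 1$, so the mean is finite and $\nu(\{0\})=0$.
\begin{itemize}
\item There are at least $2^{Dn-1}$ edges with an endpoint within distance $Dn$ of $\go$.
\item The probability that all of them have weight at most $2^{Dn/4}$ is therefore at most $\exp(-2^{Dn/2-1})$, which is summable in $n$.
\item By Borel--Cantelli, almost surely for every large $n$ some such edge $e_n$ has $\omega(e_n)\ge 2^{Dn/4}$.
\item A non-backtracking path of length $n$ starting with $e_n$ is self-avoiding in a tree. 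It satisfies $d(\gamma,\go)\le Dn$ and $\ell_\omega(\gamma)\ge 2^{Dn/4}$, which no bound $qn+Q(\omega)$ controls.
\end{itemize}
Any graph of superlinear growth admits the same construction with a suitably heavy tail.

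So your fallback is the correct reading of what Propositions~\ref{BT1} and~\ref{extension} actually deliver. They give a lower bound uniform over all such paths. They give an upper bound with deterministic slope $2b$ but an a.s.\ finite additive constant attached to a fixed path; this holds simultaneously a.s.\ for countably many fixed paths, each with its own constant.

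If a genuinely uniform upper bound is wanted, the hypothesis has to be strengthened. Suppose $\mathbb{E}e^{\lambda\omega_e}<\infty$ for some $\lambda>0$. For each self-avoiding $\gamma$ of length $n$ the edges are distinct, so the Chernoff bound gives $\mathbb{P}(\ell_\omega(\gamma)\ge Kn)\le (e^{-\lambda K}\mathbb{E}e^{\lambda\omega_e})^n$. Combining this with the same path count as in the proof of Proposition~\ref{extension} and Borel--Cantelli yields $\ell_\omega(\gamma)\le Kn+Q(\omega)$ uniformly, with a deterministic $K=K(D)$ in place of $2b$.
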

\begin{proof}
Let $Q(\omega):= \max \{ r_{1}(\omega), r_{0}(\omega) \}$ and let $q: = \max \{2b,\frac{1}{c(D)}\}$ then by Propositions ~\ref{BT1}, ~\ref{BT2} we have the desired inequalities both sides.
\end{proof}

Now take the union of all $\kappa$-Morse boundaries and it is established in \cite{QRT2} that the topology on them are consistent with each other and in fact there exists a natural topology on the set of all sublinearly Morse directions, of which all $\kappa$-boundaries are subspaces with subspace topology.
\begin{definition}
Let $\partial_s X$ denote the set of all sublinearly Morse quasi-geodesic rays, up to sublinear fellow travelling. We refer to this as the \emph{Sublinearly Morse boundary}.
\end{definition}
\begin{definition}
The topology on $\partial_s X$ is defined as follows: let $\beta$ be a $\kappa$ Morse sublinearly quasi-geodesic ray for some $\kappa$. An equivalence class $\bfa \in 
\ps X$ belongs to $U(\beta,r)$ if, for any $(q,Q)$-quasi-geodesic ray $\alpha \in \bfa$,
where $m_\beta(q,Q)$ is small compared to $r$ with respect to $\kappa$, we have the inclusion
\[ \alpha|r \ \subset N_\kappa( \beta,m_\beta(q,Q)).\]
\end{definition}
An open neighborhood $\calV$ of a point $\bfb \in \ps X$ contains all $\kappa'$-Morse quasi-geodesic rays that in the ball of radius $r$ around quasi-geodesic rays of $\bfb$, behaves as if they are a member of $\bfb$. It is established in \cite{QRT2} that with this topology the space $\ps X$ is metrizable and contains each $\pka X$ as a topological subspace.


\section{middle-recurrence of sublinear lines}
In this section we study the middle-recurrence property of sublinearly Morse bi-infinite lines and use it to characterize sublinearly Morse directions and bi-infinite lines. Let $\gamma$ be a sublinearly Morse set. The middle recurrent property concerns the location of paths with endpoints on $\gamma$ as opposed to the location of quasi-geodesic segments with endpoints on $\gamma$. 

By a \textit{path} we mean the continuous image of an interval $[a, b]$ in $X$. In this paper, we assume all paths to be an embedding of an interval and thus are  self-avoiding. This assumption makes the argument shorter.
Given a path $\alpha$ with endpoints $x, y$, suppose $\ell(\alpha)$ be the arc-length of $\gamma$ then  the \textit{slope} of $\alpha$ is defined to be 
\[ sl(\alpha):= \frac{\ell(\alpha)}{d(x,y)}  \]
where the arc-length of a continuous curve $\alpha: I \rightarrow X$ is defined as the infimum of the sum of the distances $d(\alpha(t_{i-1}), \alpha(t_i))$ over any partition $\{t_i\}_{i\geq0}$ of $I$.
\begin{definition}
Let $\gamma$ be set. We say that a $\gamma$ has the \textit{middle-recurrence property}  if for every $C\geq1$ there is a constant $c$ (depends on $C$ and $\beta$) such that the following holds: 
    for any path $P$ with endpoints $a,b \in \gamma$ satisfying $\ell(P)\leq Cd(a,b)$, intersects the $c\cdot \kappa$-neighborhood of the middle third of $\gamma[a,b]$ for some sublinear function $\kappa$. That is,
\[ P \cap \calN_{\kappa} (\gamma_{\frac 13[a,b]}, c )\neq \emptyset.\]

\end{definition}
Middle recurrence is first studied by \cite{DMS} and then corrected and further expanded by \cite{ADT}. Here we use an distance estimation lemma \cite[Lemma 3.3]{ADT} that gives an upper bound  of the distance between the end points of a path with fixed slope and other properties. Let $p$ be a path with end points $s$ and $e$. 

\begin{lemma}
Let $\beta$ be a $\kappa'$–radius-contracting quasigeodesic and suppose that $p$ is a path with the following properties:
\begin{itemize}
\item slope of $p$ is $D$
\item $p$ is at distance at least $K$  from $\beta$
\item The end points of $p$ are at distance exactly $K$ from $\beta$. 
\end{itemize}
Let $|p|: d= d(s, e)$ denote the distance between the endpoints of $p$. Then
\[ \frac{\kappa(K)}{2K} \geq \frac{1-(2K+\kappa'(K))/|p|}{D}\]
\end{lemma}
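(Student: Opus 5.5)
The plan is to bound the distance between the endpoints $s,e$ of $p$ by combining two things. First, the endpoints are close to their projections on $\beta$. Second, the projection of $p$ to $\beta$ is short, because $p$ can be covered by balls that avoid $\beta$, and each such ball has a projection of sublinearly bounded diameter. Throughout I will treat the contraction gauge of $\beta$ as a single sublinear function. I read the statement as using $\kappa$ for the contraction bound applied to the full subsegments below, and $\kappa'(K)$ for the error coming from the leftover piece; replacing both by $\max\{\kappa,\kappa'\}$ if necessary costs nothing. Write $\pi$ for the projection to $\beta$ with respect to which $\beta$ is radius-contracting. Being $\kappa$-radius-contracting means: any ball $B(x,r)$ with $r\le d(x,\beta)$ satisfies $\diam \pi(B(x,r))\le \kappa(r)$.

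\textbf{Step 1: decomposition.} Parametrize $p$ by arc length, so $\ell(p)=D\,|p|$ by the slope hypothesis. Cut $p$ into $N=\lfloor \ell(p)/(2K)\rfloor$ consecutive subpaths $p_1,\dots,p_N$ of length exactly $2K$, plus one remaining subpath $p_{N+1}$ of length less than $2K$. Each full subpath $p_i$ lies in the ball of radius $K$ around its arc-length midpoint $m_i$. Since $p$ stays at distance at least $K$ from $\beta$, we have $d(m_i,\beta)\ge K$. Hence radius-contraction applies and gives $\diam \pi(p_i)\le \kappa(K)$. The remainder $p_{N+1}$ also lies in a ball of radius at most $K$ around a point of $p$, so $\diam\pi(p_{N+1})\le \kappa'(K)$. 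This is the term recorded in the statement.

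\textbf{Step 2: chaining along $\beta$.} The projections of consecutive subpaths overlap at their common endpoints, so chaining the estimates gives
\[ d(\pi(s),\pi(e)) \le N\kappa(K)+\kappa'(K)\le \frac{\ell(p)}{2K}\kappa(K)+\kappa'(K)=\frac{D|p|\,\kappa(K)}{2K}+\kappa'(K).\]
The endpoints satisfy $d(s,\beta)=d(e,\beta)=K$. After adjusting $\pi$ to be a closest-point choice at $s$ and $e$ (or absorbing a bounded multiplicative error into the constants), we get $d(s,\pi(s)),\,d(e,\pi(e))\le K$.

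\textbf{Step 3: conclusion.} The triangle inequality now gives
\[ |p|\le 2K+\kappa'(K)+\frac{D|p|\,\kappa(K)}{2K}.\]
Dividing by $D|p|$ and rearranging yields exactly
\[ \frac{\kappa(K)}{2K}\ge \frac{1-(2K+\kappa'(K))/|p|}{D}.\]

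The main obstacle is Step 1: checking that the ball of radius $K$ around each midpoint is genuinely admissible for the radius-contracting definition. The issue is that the definition may require $r< d(m_i,\beta)$ strictly, or may require the ball to be disjoint from $\beta$. In that case I would use segments of length $2K-\epsilon$ and let $\epsilon\to 0$, or use concavity of $\kappa$ to absorb the discrepancy.
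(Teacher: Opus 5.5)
Your argument is correct, and it is the standard one. The paper gives no proof of its own but quotes the lemma from \cite[Lemma 3.3]{ADT}, and your subdivision of $p$ into length-$2K$ pieces, each inside a ball of radius $K$ far from $\beta$, followed by chaining the projections and using the triangle inequality at the endpoints, is exactly the argument that produces those constants.

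Two small clean-ups. First, under the hypothesis that $\beta$ is $\kappa'$-radius-contracting, every piece (full or leftover) contributes $\kappa'(K)$, so what you actually prove is the inequality with $\kappa'$ in both places. That is the intended reading of the statement; passing to $\max\{\kappa,\kappa'\}$ does not literally give the mixed form. Second, $d(s,\pi(s))=d(e,\pi(e))=K$ should come directly from $\pi$ being the closest-point projection, not from absorbing a multiplicative error, since that would change the $2K$ term.
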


\subsection*{Notation}In the rest of this paper, if $\gamma$ is a geodesic line, then the restriction of $\gamma$ between two points $a, b\in \gamma$ is notated as $\gamma{[a, b]}$. Furthermore, if we are referring to a segment of this geodesic segment that starts at one third from the endpoint $a$ and ends at $\frac 23$ from the endpoint $a$ then we notate it as $\gamma_{\frac13 [a,b]}$. In general, if a geodesic segment $\alpha$ lies on a geodesic ray that emanates from the origin, then it is assumed that $\alpha_{[p, q]}$, where $0\leq p, q \leq 1$, lies between the fractions $p$ and $q$ starting from the point closer to the origin.

In \cite{JanaQing1}(Theorem A]) we have proved the following one sided Proposition:
\begin{proposition}[$\kappa$-Morse implies middle recurrence] \label{prop3.3}
Let $\gamma$ be a bi-infinite geodesic line. Assume that  $\gamma$ is $\kappa$-Morse with Morse gauge $M$. Let $P$ be any simple path with endpoints $a, b \in \gamma$ and such that $\ell(P) < C d(a, b)$. Then there exists a sublinear neighborhood of $\gamma$ depending only on $\gamma$ and $C$ such that 
\[ P \cap \calN_{\kappa'} (\gamma_{\frac13 [a,b]}, c(C, \beta)) \neq \emptyset.\]
Here $\kappa'$ is the contracting sublinear function of $\beta$.
\end{proposition}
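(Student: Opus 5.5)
The plan is to go through the sublinear contraction property rather than the Morse condition directly, since the path $P$ is not a quasi-geodesic. By Theorem~\ref{A-sublinearlyequivalence}, applied to the nearest point projection (a $\kappa$-projection), the $\kappa$-Morse line $\gamma$ is $\kappa'$-weakly contracting for some sublinear $\kappa'$. So there is a constant $\mathsf{c}_0$ such that any ball $B(x,r)$ with $r \le d(x,\gamma)$ and $r$ large compared to $\kappa'(x)$ projects to a set of diameter at most $\mathsf{c}_0\kappa'(x)$. The radius-contracting distance lemma from \cite{ADT} stated above (with $\beta=\gamma$) then bounds paths of slope $D$ that stay outside a $K$-neighborhood of $\gamma$.

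\emph{Step 1 (reduction).} Suppose $P$ avoids $\calN_{\kappa'}(\gamma_{\frac13[a,b]}, c)$ for a large $c$ to be chosen. Parametrize $P$ and let $\pi$ be the projection to $\gamma$. Then $\pi(P(0))=a$ and $\pi(P(1))=b$. Since $\pi$ is coarsely continuous along $P$ (jumps are bounded by the contraction estimate applied to short subpaths), there is a subpath $P'$ whose projection coarsely crosses the middle third, with endpoints projecting near the points at fractions $\tfrac13$ and $\tfrac23$ of $\gamma[a,b]$.

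\emph{Step 2 (slope bound).} $P'$ stays at distance at least $c\,\kappa'(\cdot)$ from the middle third. By Lemma~\ref{lemma2.3}, this is comparable to $c\,\kappa'(\Norm{x})$ for the relevant points, since they all have norm comparable to some fixed point of the middle third. Distances from $\gamma$ are controlled in the same way, possibly after restricting further to the part of $P'$ that is closest to the middle third. Trim $P'$ to the first and last times it hits distance exactly $K\asymp c\,\kappa'(R)$ from $\gamma$, where $R$ is the norm scale of the middle third. Its endpoints then project at distance $\gtrsim d(a,b)/3 - O(K)$ apart, so $|P'|\gtrsim d(a,b)/3 - 2K$. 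Also $\ell(P')\le \ell(P)\le C d(a,b)$, hence the slope satisfies $D\le 3C+o(1)$.

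\emph{Step 3 (contradiction).} Apply the lemma to get
\[ \frac{\kappa'(K)}{2K}\ge \frac{1-(2K+\kappa'(K))/|P'|}{D}.\]
The left side tends to $0$ as $c\to\infty$ by sublinearity. The right side is at least roughly $1/(2D)\ge 1/(6C+1)$ once $d(a,b)$ is large compared to $K$. This gives a contradiction for $c=c(C,\gamma)$ large. When $d(a,b)$ is not large compared to $K\sim c\,\kappa'(R)$, the middle third lies within $\tfrac23 d(a,b)\lesssim c\,\kappa'$ of $a$, so $a\in P$ already lies in the neighborhood after enlarging $c$.

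\textbf{Main obstacle.} I expect the hardest part to be Step 1–2. The issue is making the distance to $\gamma$, rather than to the middle third only, uniformly $\ge K$ on $P'$, and handling the dependence of $\kappa'$ on basepoint norms. The first approach I would try is to argue that points of $P'$ close to $\gamma$ outside the middle third would project outside it, contradicting the choice of $P'$ as the crossing piece. Where needed, I would use Lemma~\ref{lemma2.3} to replace $\kappa'(x)$ by $\kappa'(R)$ up to constants.
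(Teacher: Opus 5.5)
The paper does not prove this proposition itself: it quotes it from \cite{JanaQing1}, placing the distance lemma of \cite{ADT} just before it. So your outline (weak contraction, an excursion subpath of bounded slope, counting projections) is the expected route. \textbf{There is a genuine gap, however, in Steps~2--3.} There you collapse everything to one scale $K\asymp c\,\kappa'(R)$ and apply the \cite{ADT} lemma with $\beta=\gamma$.

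The contraction you actually have is norm-dependent: a ball $B(x,r)$ with $r\le d(x,\gamma)$ projects to diameter at most $\mathsf{c}_0\kappa'(\Norm{x})$, whatever $r$ is. It is not a bound $\rho(r)$ in the radius; a sublinear radius-contraction would make $\gamma$ genuinely Morse. So the lemma's hypothesis fails for $\gamma$. Your reduction therefore needs every point of $P'$ to have norm comparable to a single $R$, and that is false in general:
\begin{itemize}
\item If $\go$ lies on $\gamma_{\frac13[a,b]}$, the middle third already carries norms from $0$ up to about $d(a,b)/3$.
\item Nothing prevents $P$ from passing within distance of $\go$ bounded independently of $d(a,b)$, where the excluded neighborhood has radius only $c\,\kappa'$ of a constant. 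Elsewhere $P$ reaches norm of order $\Norm{a}+C\,d(a,b)$.
\item Lemma~\ref{lemma2.3} cannot supply the comparison, because it only compares $\kappa$ at points within sublinear distance of each other.
\end{itemize}
With one $K$, either $K$ is too large for the avoidance hypothesis near $\go$, or the contraction bound at large norm exceeds $K$ and the count gives nothing. The same confusion shows in Step~3: the quantity that must be small is the ratio of projection bound to ball radius, not $\kappa'(K)/2K$.

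One way to close the gap is to drop the single scale and count with norm-adapted balls, which is scale invariant.
\begin{itemize}
\item Let $\gamma_-$ and $\gamma_+$ be the parts of $\gamma$ before the $\frac13$-point and after the $\frac23$-point. Set $N=\max\{\Norm a,\Norm b\}+C\,d(a,b)$, which bounds all norms on $P$.
\item Let $t_2$ be the first time $P(t)$ comes within $c\,\kappa'(\Norm{P(t)})$ of $\gamma_+$. Let $t_1<t_2$ be the last earlier time it is that close to $\gamma_-$. These time sets are disjoint once $d(a,b)>6c\,\kappa'(N)$, since $d(\gamma_-,\gamma_+)=d(a,b)/3$.
\item Then $d(P(t),\gamma)\ge c\,\kappa'(\Norm{P(t)})$ on $[t_1,t_2]$. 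This also settles your worry about distance to all of $\gamma$.
\item Cut $P[t_1,t_2]$ greedily at points $x_i$ with $d(x_i,x_{i+1})=c\,\kappa'(\Norm{x_i})$. Each piece projects to diameter at most $2\mathsf{c}_0\kappa'(\Norm{x_i})\le \frac{2\mathsf{c}_0}{c}\,\ell(\text{piece})$, so
\[
\frac{d(a,b)}{3}-4c\,\kappa'(N)\;\le\; d\big(\pi P(t_1),\pi P(t_2)\big)\;\le\; \frac{2\mathsf{c}_0C}{c}\,d(a,b)+2\mathsf{c}_0\,\kappa'(N).
\]
\item With $c=12\mathsf{c}_0C$, this (or the failure of disjointness) forces $d(a,b)\le L\,\kappa'(N)$ for a constant $L$.
\item The endpoint of larger norm is at distance $d(a,b)/3$ from the middle third, so it then lies in $\calN_{\kappa'}(\gamma_{\frac13[a,b]},c^*)$ for a suitable $c^*\ge c$. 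If $\max\{\Norm a,\Norm b\}\ge d(a,b)$, use concavity: $\kappa'(N)\le(1+C)\kappa'(\max\{\Norm a,\Norm b\})$. Otherwise sublinearity bounds $d(a,b)$ outright.
\end{itemize}
Running the argument with $c$ while assuming $P$ avoids the $c^*$-neighborhood is what makes your ``after enlarging $c$'' non-circular.
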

Now in the following proposition we will establish the other direction:
\begin{proposition}[Middle recurrence implies sublinearly Morse] \label{prop3.4}
 Let $\gamma$ be a geodesic ray. Suppose  there exist function $M \from \mathbb{R_{+}}\to\mathbb{R_{+}}$ and a sublinear function $\kappa$ such that the following holds:  
 \begin{itemize}
     \item for every self-avoiding path $P$ with slope $sl(P)<C$ and endpoints $a,b\in\gamma$,
\[
P \cap \mathcal{N}_\kappa \left(\gamma_{\frac13 [a,b]},\, M(C)\right)\neq \emptyset.
\]
 \end{itemize}
 Then $\gamma$ is sublinearly Morse. 


\end{proposition}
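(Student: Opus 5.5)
We argue by contrapositive through the $\kappa$-weakly Morse condition (Definition~\ref{D:k-morse}, II), which by Proposition 3.10 of \cite{QRT2} is equivalent to $\kappa$-Morse. Fix a $(\qq,\sQ)$-quasi-geodesic $\beta\colon[s,t]\to X$ with endpoints $a,b\in\gamma$. We want a constant $\mm(\qq,\sQ)$ with $\beta\subset\calN_{\kappa''}(\gamma,\mm(\qq,\sQ))$ for a sublinear $\kappa''$ that is a fixed multiple of $\kappa$. Suppose instead that some point $x\in\beta$ satisfies $d(x,\gamma)=K>\mm\cdot\kappa(\Norm{x})$. Take the maximal subsegment $\beta[u,v]\ni x$ lying outside the open $K'$-neighbourhood of $\gamma$, where $K'=K/2$. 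Its endpoints $\beta(u),\beta(v)$ are at distance exactly $K'$ from $\gamma$; let $a',b'\in\gamma$ be nearest-point projections of them.

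\textbf{Building a low-slope path that avoids the middle third.} Let $P$ be the concatenation of a geodesic $[a',\beta(u)]$, the subpath $\beta[u,v]$, and a geodesic $[\beta(v),b']$. Since $\beta$ is a quasi-geodesic, $\ell(\beta[u,v])\le \qq'(v-u)$ after the usual taming of quasi-geodesics (continuous with length comparable to parameter length, up to constants depending on $\qq,\sQ$). The key estimate is $d(a',b')\gtrsim K$. This holds because $x$ is at distance $K$ from $\gamma$ while $\beta(u),\beta(v)$ are at distance $K/2$, so $v-u\ge K/(\qq)-\sQ$. The quasi-geodesic inequality then gives $d(\beta(u),\beta(v))\ge (v-u)/\qq-\sQ$, and hence $d(a',b')\ge d(\beta(u),\beta(v))-K$. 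If this last bound is too weak, we instead choose the threshold $K'=\theta K$ with $\theta$ small, in the spirit of the ADT distance lemma, so that it becomes positive and linear in $v-u$. We then get $sl(P)\le C=C(\qq,\sQ)$. Applying the hypothesis produces a point $p\in P$ with $d(p,\gamma_{\frac13[a',b']})\le M(C)\kappa(p)$.

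\textbf{Deriving the contradiction.} Points of $\beta[u,v]$ are at distance at least $K'$ from $\gamma$, and $K'\gg M(C)\kappa$ once $\mm$ is large; Lemma~\ref{lemma2.3} lets us compare $\kappa$ at nearby points with $\kappa(\Norm x)$. Hence $p$ must lie on one of the two geodesic spurs $[a',\beta(u)]$ or $[\beta(v),b']$. Every point of the spur $[a',\beta(u)]$ lies within $K'$ of $a'$, so $p$ is within $K'+M(C)\kappa$ of the middle third of $\gamma[a',b']$. Since $a'$ itself is at distance at least $\frac13 d(a',b')$ from that middle third, this forces $d(a',b')\le 3(K'+M(C)\kappa)$. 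To rule this out we need $d(a',b')>3K'+O(\kappa)$. We arrange this by choosing $\theta$ small relative to $\qq$, so that the linear lower bound on $d(a',b')$ in terms of $v-u\gtrsim K$ beats $3\theta K$. This contradiction shows that every $x\in\beta$ satisfies $d(x,\gamma)\le\mm(\qq,\sQ)\kappa(\Norm x)$, i.e.\ $\gamma$ is $\kappa$-weakly Morse.

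\textbf{Main obstacle.} The hardest step is the lower bound on $d(a',b')$ relative to $K'$. A quasi-geodesic excursion of depth $K$ could in principle return close to where it left, and the projections $a',b'$ could then nearly coincide. Handling this requires a careful choice of $\theta$ together with the quasi-geodesic inequality applied between $\beta(u)$, $x$ and $\beta(v)$: the bounds $d(\beta(u),x)\ge K-K'$ and $d(x,\beta(v))\ge K-K'$ force $v-u\gtrsim K/\qq$, and hence $d(\beta(u),\beta(v))\gtrsim K/\qq^2-\sQ$. If this is still insufficient, the fallback is to iterate: pass to a subexcursion at a scale where the ratio of depth to width is controlled, as in the proof of \cite[Lemma 3.3]{ADT}.
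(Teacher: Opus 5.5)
\textbf{There is a genuine gap in the step ``Deriving the contradiction.''} To rule out $p\in\beta[u,v]$ you need $M(C)\,\kappa(\Norm{p})<K'=\theta K$ at \emph{every} point of the excursion. You obtain this from $K>\mm\,\kappa(\Norm{x})$ by comparing $\kappa$ at ``nearby points'' via Lemma~\ref{lemma2.3}. But $\beta[u,v]$ is the maximal excursion above a \emph{constant} level $\theta K$, and nothing localizes it near $x$.

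\begin{itemize}
\item Its endpoints are already at distance at least $(1-\theta)K$ from $x$. That is outside any $D\,\kappa(\Norm{x})$-ball with $D$ independent of $\mm$, which is what Lemma~\ref{lemma2.3} needs.
\item In general the excursion can run for unbounded length to points of much larger norm, and the hypothesis does not forbid this. A quasi-geodesic may wander at distance comparable to $\kappa(t)$ at norm $t$, and this exceeds $\theta K$ once $t$ is large. So $\beta$ can stay above level $\theta K$ from $x$ until it is near $b$.
\item In that case the point produced by middle recurrence may lie on $\beta[u,v]$ with $\theta K\le d(p,\gamma)\le M(C)\,\kappa(\Norm{p})$, and there is no contradiction.
\item The spur case has the same defect, since $a'$, $b'$ and the spurs may sit at norms much larger than $\Norm{x}$.
\end{itemize}

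By contrast, the step you flag as the main obstacle is fine. With $\theta<2/(5\qq^2+2)$, the quasi-geodesic inequality gives $d(a',b')\ge\bigl(2(1-\theta)/\qq^2-2\theta\bigr)K-O(\sQ)>3\theta K$, together with a slope bound depending only on $\qq,\sQ$. Your fallback to the distance lemma of \cite{ADT} is not available, because that lemma assumes $\gamma$ is (radius-)contracting, which is essentially what you are trying to prove.

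\textbf{The missing idea.} The paper cuts $\beta$ along the \emph{variable} neighbourhood $\calN_\kappa(\gamma,d)$ instead of a constant level, with $d=M(C_0)$ for a slope bound $C_0$ depending only on $\qq$. Then every interior point of an excursion automatically violates the middle-recurrence conclusion, so no comparison of $\kappa$ along the excursion is ever needed. The price is that the excursion's endpoints need not be far apart, so one needs a dichotomy (Lemma~\ref{middle rec quasi geodesic} and the proof that follows it).
\begin{itemize}
\item \emph{Endpoints at least $12d\max\kappa$ apart.} The spurs to $\gamma$, of length at most $d\kappa$, are a fixed fraction of the base. So the concatenated path has slope bounded independently of $d$; this scale-invariance is what makes the choice $d=M(C_0)$ non-circular. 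Middle recurrence then yields a point that can lie neither on a spur (too far from the middle third) nor on the excursion (outside $\calN_\kappa(\gamma,d)$), a contradiction.
\item \emph{Endpoints closer than $12d\max\kappa$.} The quasi-geodesic inequality bounds the parameter length, hence the diameter, of the excursion by $O(\qq^2 d\kappa)$. Lemma~\ref{lemma2.3} turns this into $d(\cdot,\gamma)\lesssim\kappa(\Norm{\cdot})$.
\end{itemize}
Your depth-proportional threshold makes the base automatically long, which is convenient, but it loses exactly this pointwise control.
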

Before giving the main proof of the proposition, we establish the following lemma. 
\begin{lemma}\label{middle rec quasi geodesic} (See figure ~\ref{middle rec lemma})
Let $\gamma$ be a geodesic and assume the hypotheses of Proposition \ref{prop3.4}. Let $p:I\to X$ be a continuous $(q,q)$--quasi-geodesic with endpoints $a,b\in\gamma$.
Fix
\[
d:=\max\left\{M\left(\frac{13q+2}{10},0\right),\,1\right\}.
\]
If $\pp'\subset P$ is a sub path that remains at most distance $d\cdot\kappa(t)$ from $\gamma$ and its endpoints
$a'$ and $b'$ satisfy,
$
d(a',b') \geq 12d \cdot \max \{ \kappa(\Norm{a'},  \kappa(\Norm{b'} \}
$
then
\[
\pp'\cap \mathcal N_{\kappa}(\gamma,d)\neq\emptyset.
\]
\end{lemma}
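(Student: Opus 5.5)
The statement as written is nearly tautological, since $\pp'$ already stays within $d\cdot\kappa$ of $\gamma$. I will therefore aim to prove the stronger statement the lemma is presumably meant to give, and which is what the proof of Proposition~\ref{prop3.4} will need:
\[
\pp'\cap\mathcal N_\kappa\bigl(\gamma_{\frac13[a'',b'']},d\bigr)\neq\emptyset,
\]
where $a'',b''\in\gamma$ are nearest-point projections of $a',b'$. In words, $\pp'$ comes sublinearly close to the \emph{middle third} of the part of $\gamma$ it shadows. The plan is to close $\pp'$ into a path with endpoints on $\gamma$ and bounded slope, and then apply the hypothesis of Proposition~\ref{prop3.4} with $C=\frac{13q+2}{10}$.

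\textbf{Step 1: the auxiliary path.} Choose $a'',b''\in\gamma$ nearest to $a',b'$, so that $d(a',a'')\le d\kappa(\Norm{a'})$ and $d(b',b'')\le d\kappa(\Norm{b'})$. Let $P''$ be the concatenation of a geodesic $[a'',a']$, the subpath $\pp'$, and a geodesic $[b',b'']$. If $P''$ is not self-avoiding, I replace it by a self-avoiding subpath with the same endpoints. This only shortens the path and keeps its image inside that of $P''$, so nothing below is lost.

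\textbf{Step 2: the slope estimate.} Write $\kappa_{\max}=\max\{\kappa(\Norm{a'}),\kappa(\Norm{b'})\}$.
\begin{itemize}
\item Since $p$ is a continuous $(q,q)$--quasi-geodesic, I take (after the usual taming, as in \cite{QRT1}) $\ell(\pp')\le q\,d(a',b')+q$.
\item Therefore $\ell(P'')\le q\,d(a',b')+q+2d\kappa_{\max}$.
\item By the triangle inequality, $d(a'',b'')\ge d(a',b')-2d\kappa_{\max}$.
\item The assumption $d(a',b')\ge 12d\kappa_{\max}$ gives $2d\kappa_{\max}\le \tfrac16 d(a',b')$, so $d(a'',b'')\ge\tfrac56 d(a',b')$.
\item Since $d\ge1$ and $\kappa\ge1$, we have $d(a',b')\ge 12$, so the additive $q$ can be absorbed as $q\le \tfrac1{12}q\,d(a',b')$.
\end{itemize}
Combining these,
\[
sl(P'')\le\frac{\bigl(q+\tfrac{q}{12}+\tfrac16\bigr)d(a',b')}{\tfrac56 d(a',b')}=\frac{13q+2}{10}.
\]
Applying the hypothesis with $M\bigl(\frac{13q+2}{10}\bigr)\le d$ then yields a point $z\in P''$ with $d\bigl(z,\gamma_{\frac13[a'',b'']}\bigr)\le d\,\kappa(z)$.

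\textbf{Step 3: $z$ lies on $\pp'$.} This is the main obstacle. I must rule out that $z$ lies on one of the short connecting geodesics, say $[a'',a']$.
\begin{itemize}
\item Every point of $[a'',a']$ is within $d\kappa(\Norm{a'})$ of $a''$.
\item Every point of $\gamma_{\frac13[a'',b'']}$ is at distance at least $\tfrac13 d(a'',b'')\ge\tfrac5{18}d(a',b')\ge \tfrac{10}{3}d\kappa_{\max}$ from $a''$.
\item So $z$ would need to be within $d\kappa(z)$ of the middle third, while the geometry forces a separation of order $3d\kappa_{\max}$.
\end{itemize}
To close this I will use Lemma~\ref{lemma2.3}, with $x=z$ and $y=a'$, to compare $\kappa(z)$ with $\kappa(\Norm{a'})$. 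The comparison constant it produces is not literally $1$, so I expect to have to enlarge the constant $12$ slightly, or absorb the comparison constant into $d$. I expect this bookkeeping to be the only delicate point. Once it is done, $z\in\pp'$, which gives the claimed intersection.
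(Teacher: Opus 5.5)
Your Steps 1 and 2 follow the paper's argument. You use the same concatenation $[x,a']\cdot\pp'\cdot[b',y]$, the same bound $d(x,y)\ge\frac56 d(a',b')$, and reach the same slope bound $\frac{13q+2}{10}$. Your reading of the conclusion, that some point of $\pp'$ is sublinearly close to the middle third, is what the paper's proof actually produces.

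The gap is Step 3, which you call the main obstacle and leave open. To contradict $d(a'',w)\ge\frac{10}{3}\,d\,\kappa_{\max}$ via $d(a'',w)\le d\,\kappa(\Norm{a'})+d\,\kappa(\Norm{z})$, you need, in the worst case, $\kappa(\Norm{z})<\frac73\kappa_{\max}$. Lemma~\ref{lemma2.3} cannot give this: its comparison constant is unspecified, and as proved it equals $3+2RD\ge 3$. Absorbing the constant into $d$ does not help, because both sides of the inequality are linear in $d$, so the requirement on $\kappa(\Norm{z})/\kappa_{\max}$ is independent of $d$. With that tool you could only prove the lemma with $12$ replaced by a larger constant depending on $d$ and $\kappa$, which is not the stated result.

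The missing observation is the one behind the paper's step $\Norm{c}\le\Norm{a'}+d(a',x)\le 2\Norm{a'}$. Since $\gamma$ is a ray starting at $\go$ and $a''$ is a nearest point, $d(a',a'')=d(a',\gamma)\le d(a',\go)=\Norm{a'}$. Hence every $z\in[a'',a']$ satisfies $\Norm{z}\le\Norm{a'}+d(a',z)\le 2\Norm{a'}$. Monotonicity and concavity of $\kappa$, with $\kappa(0)\ge 1$, then give $\kappa(\Norm{z})\le\kappa(2\Norm{a'})\le 2\kappa(\Norm{a'})$. So $d(a'',w)\le 3\,d\,\kappa_{\max}<\frac{10}{3}\,d\,\kappa_{\max}$, a contradiction, and the constant $12$ works as stated. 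The same argument handles $[b',b'']$.

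Your choice of nearest-point projections in Step 1 is exactly what makes this valid. The paper only takes points within $d\kappa$ of $a'$, and then $d(a',x)\le\Norm{a'}$ is not automatic. If you replace the appeal to Lemma~\ref{lemma2.3} with this two-line estimate, your proof is complete and coincides with the paper's.
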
 
\begin{remark}
  Although we stated the lemma for a $\pp$ being a $(q,q)$-quasi-geodesic, the conclusion holds for an arbitrary quasi-geodesic: by the taming lemma (\cite{BH1}, Lemma III.H.1.11]), any quasi-geodesic can be replaced by a tame one (with controlled constants).  
\end{remark}
\begin{figure}[H] 

\includegraphics[height=5cm,width=13cm]{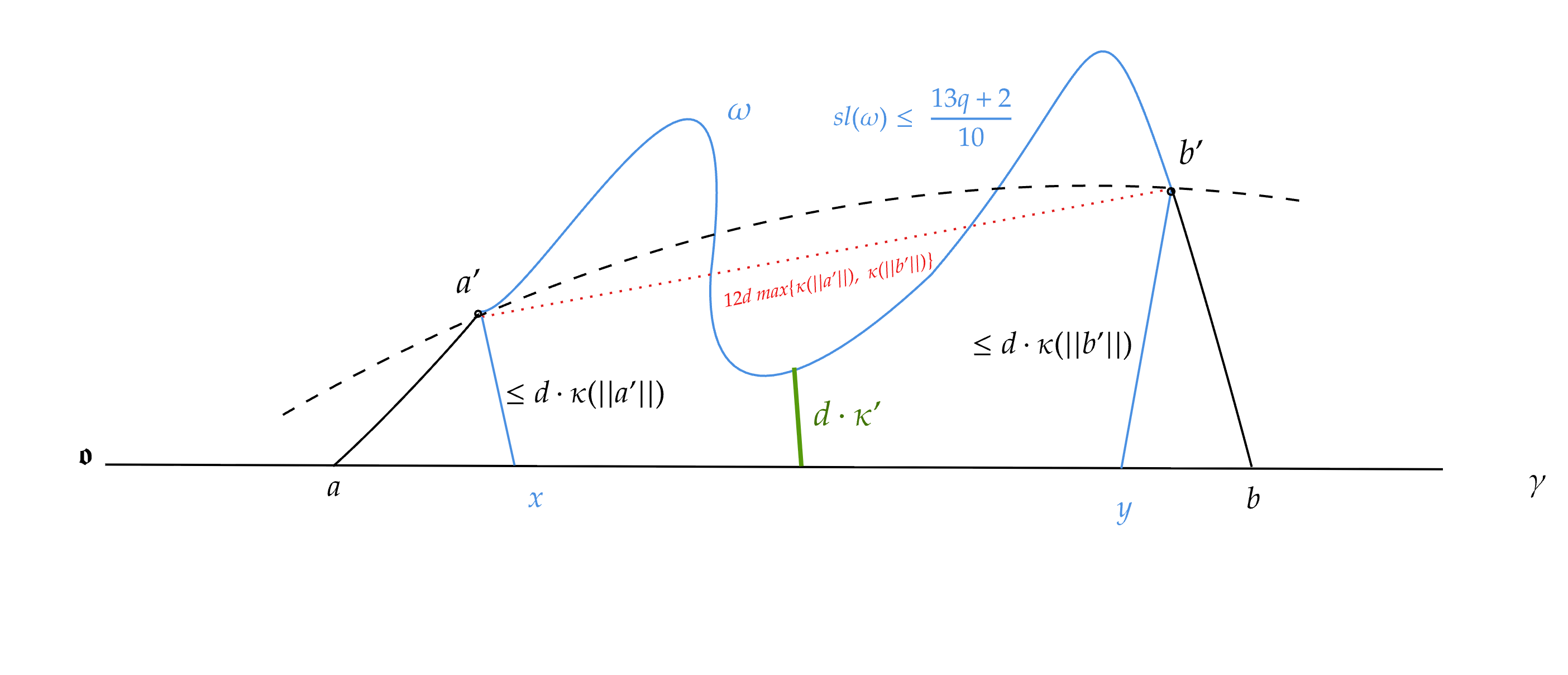} 
\caption{ if $a',b'$ are $\kappa-$separated then a sub path $\pp'$ intersects a $\kappa'$ neighborhood of $\gamma.$ \label{middle rec lemma}} 
\end{figure} \begin{proof}
Suppose, $A=A(a',b')=  \max \{\kappa(||a'||), \kappa (||b'||)\}$. Let \( x, y \in \gamma \) such that \( d(x, a') \leq d \cdot \kappa(||a'||) \), \( d(y, b') \leq d \cdot \kappa(||b'||)\). Let \( \omega \) be the concatenation \( [x, a'] \cdot \pp' \cdot [b', y] \). From the given hypothesis, we have the following estimates:
\begin{equation}
   d(x, y)\geq d(a',b') - d(a',x) -d(b'y) \geq 12d \cdot A(a',b')-d \kappa(||a'||)-d \kappa(||b'||)  \label{eq2}
\end{equation}
Therefore we have,
\begin{equation}
 d(x, y)\geq 10 d \cdot A(a',b') \label{eq3}
\end{equation} On other hand, since $d(a',b)\geq 12d \cdot A(a',b')$ we get,
\begin{equation}
d(a',b') - 2d \cdot A(a',b') \ge \frac{5}{6}\,d(a',b') .
\label{eq4}
\end{equation}
 Since $\pp$ is $(q,q)$ quasi-geodesic we can assume that $\Norm{\pp'} \leq q \cdot d(a',b') +q$.
Now we compute the slope of $\omega$:
\begin{align*}
\frac{\ell(\omega)}{d(x, y)} &\leq \frac{\Norm{\pp'}+ d(a',x)+d(b',y)}{d(x,y)}\\
&\leq\frac{\Norm{\pp'}+ d\cdot \kappa(\Norm{a'})+ d \cdot \kappa(\Norm{b'})}{d(x,y)}\\
&\leq \frac{\Norm{\pp'}}{d(a',b')-d \cdot \kappa(\Norm{a'})-d \cdot \kappa(\Norm{b'})}+ \frac{d\cdot \kappa(\Norm{a'})+ d \cdot \kappa(\Norm{b'})}{d(x,y)} \\
&\leq \frac{q \cdot d(a',b')+q}{d(a',b')-2d \cdot A(a',b')}+ \frac{2d \cdot A(a',b')}{10d \cdot A(a',b')} \qquad\text{(by equation~\ref{eq3})}\\ 
&\leq \frac{q \cdot d(a',b')+q}{\frac{5}{6}d(a',b')}+ \frac{1}{5}  \qquad\text{(by equation~\ref{eq4})}\\
&\leq \frac{13q+2}{10}
\end{align*}
Thus, $\omega$ has bounded slope. Hence, by middle recurrence, there is a point $c \in \pp'$ that intersects the $\kappa$-neighborhood of the middle third of $\gamma_{[x, y]}$ i.e. there exist $z \in \gamma_{\frac{1}{3}[x, y]}$ such that $d(c,z) \leq d \cdot \kappa(\Norm z).$
\newline \underline{\emph{Claim:} } $c$ belongs to $\pp'.$
\newline \underline{\emph{proof of the claim:}} Suppose without loss of generality $c$ is on the geodesic joining $a'$ and $x$. By definition ~\ref{def:middle recurrent} and equation ~\ref{eq3} we have, 
\begin{equation}
    3d(z,x) \geq  d(x,y) \geq 10d \cdot A(a',b'). \label{eq5}
\end{equation}
Note that, $\Norm c \leq \Norm {a'}+ d(a',x) \leq 2 \Norm {a'}.$ Hence, by triangle inequality we have, 
\begin{align*}
  3d(z,x) &\leq d(z,c)+d(c,x)\\
  & \leq d \cdot \kappa(\Norm c)+d \cdot \kappa(\Norm a')\\
  & \leq 9d\cdot A(a',b')
\end{align*}
which is a contradiction to equation ~\ref{eq5}. Hence, the lemma follows.

\end{proof}
\subsection*{Proof of Proposition ~\ref{prop3.4}}

Let, $Q =\{t\in I:d(\pp(t), \gamma) \leq d \cdot \kappa(\Norm{\pp(t)})\}$. $Q^c$ is a collection of disjoint unions of open intervals in $\mathbb{R}$. If $(t,t') \in Q^c$ then \[d (\pp(t),\pp(t')) < 12d \cdot A(\pp(t),\pp(t')).\] If $\eta \in Q^c$ then $\eta \in (t,t')\subset Q^c$ for some $t,t' \in \mathbb{R}$. We have then,
 \[d(\pp(\eta), \pp(t))< d \cdot  \max \{ \kappa(\Norm {\pp(\eta)}, \kappa(\Norm{\pp(t))}.\} \]
 Otherwise it contradicts Lemma ~\ref{middle rec quasi geodesic}.
 Therefore, \[d(\pp(\eta),\gamma) < d(\pp(\eta), \pp(t))+ d \cdot \kappa(\Norm{\pp(t))} < 12d \cdot  A(\pp(\eta), \pp(t))+d \cdot \kappa(\Norm{\pp(t)}).\]
 Since $ d(\pp(\eta), \pp(t)) \leq 12d \cdot \max \{ \kappa(\Norm {\pp(\eta)}, \kappa(\Norm{\pp(t))}\}$ then by Lemma ~\ref{lemma2.3} 
 there exist $D >0$ such that, 
 $$\kappa(\Norm {\pp(t))} \le D \cdot \kappa(\Norm{\pp(\eta))}$$ 
Therefore, \[d(\pp(\eta),\gamma) \leq 12d\cdot (\kappa(\Norm {\pp(\eta)})+D\cdot \kappa(\Norm \pp(\eta))+ d \cdot D\cdot \kappa(\Norm {\pp(\eta)}).\]
This implies, \[d(\pp(\eta),\gamma) \leq d \cdot (13 D+12) \cdot  \kappa(\Norm {\pp(\eta)}).\]
Thus, there exists a sublinear function $\kappa'$ depends on $\kappa$ such that $\pp \subset \calN_{\kappa'}(d, \gamma)$. Hence, Proposition ~\ref{prop3.4} follows.
\hfill $\blacksquare$\\
Now we can conclude the following theorem, as Proposition ~\ref{prop3.3}, Proposition ~\ref{prop3.4} establish the statement in each direction.
\begin{theorem}
Let $X$ be a proper geodesic metric space. Let $\beta$ be a $\qq$-quasi-geodesic ray. Then $\beta$ is sublinearly Morse if and only if $\beta$ is $\kappa$-middle recurrent for some $\kappa$.
\end{theorem}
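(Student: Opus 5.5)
The plan is to obtain the theorem by combining Proposition~\ref{prop3.3} and Proposition~\ref{prop3.4}. First I will reduce from a $\qq$-quasi-geodesic ray $\beta$ to a geodesic ray. Sublinear Morseness is stable under passing to a sublinearly fellow-travelling representative: by the result of \cite{QRT2} recalled above, if $\beta$ is $\kappa$-Morse then every quasi-geodesic in $[\beta]$ $\kappa$-fellow travels it. Since $X$ is proper, Arzel\`a--Ascoli produces a geodesic ray $\gamma$ from $\go$ in the same class, as a limit of geodesics $[\go,\beta_n]$. Then $\beta\subset\calN_\kappa(\gamma,\nn)$ and $\gamma\subset\calN_\kappa(\beta,\nn)$.

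Middle recurrence also transfers between $\beta$ and $\gamma$ at the cost of a larger sublinear function. Given a path $P$ with endpoints $a,b\in\beta$ and $\ell(P)\le C d(a,b)$, choose $a',b'\in\gamma$ with $d(a,a')\le \nn\kappa(\Norm a)$, and similarly for $b'$. Extend $P$ by the geodesics $[a',a]$ and $[b,b']$ to obtain a path $P'$. When $d(a,b)$ is large compared with $\kappa(\Norm a),\kappa(\Norm b)$, the slope of $P'$ is at most $2C$. The middle third of $\gamma[a',b']$ lies within a $\kappa$-neighborhood of the middle third of $\beta[a,b]$, up to slightly shrinking the fractions. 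Lemma~\ref{lemma2.3} converts between $\kappa(\Norm x)$ at the nearby points. In the complementary case, where $d(a,b)\lesssim\kappa$, the whole of $P$ is within a $\kappa$-multiple of $a$, so the conclusion is automatic after enlarging the constant. The same argument works in the reverse direction.

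For the forward direction, I will apply Proposition~\ref{prop3.3} to $\gamma$. That proposition is stated for bi-infinite lines, but its argument only uses $\kappa$-Morseness of the segment $\gamma[a,b]$, so it applies to rays. This gives middle recurrence of $\gamma$ with the contracting function $\kappa'$ from Theorem~\ref{A-sublinearlyequivalence}, and the transfer above then gives middle recurrence of $\beta$. For the reverse direction, I will transfer middle recurrence from $\beta$ to $\gamma$ and apply Proposition~\ref{prop3.4}. That shows every $(q,q)$-quasi-geodesic with endpoints on $\gamma$ stays in $\calN_{\kappa''}(\gamma,\cdot)$, which is the $\kappa$-weakly Morse condition~II. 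Definition~\ref{D:k-morse} makes it equivalent to $\kappa$-Morse, and hence $\beta$, which fellow-travels $\gamma$, is sublinearly Morse.

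The main obstacle is the reverse-direction transfer. There, a fellow-traveling geodesic $\gamma$ is not yet known to exist through Morse theory. It is still available by properness, but the relation between $\beta$ and $\gamma$ is only sublinear fellow-travelling, whereas the transfer needs it to be uniform. I would first try to avoid $\gamma$ altogether. The idea is to rerun Lemma~\ref{middle rec quasi geodesic} and the proof of Proposition~\ref{prop3.4} directly for the quasi-geodesic $\beta$, replacing each geodesic subsegment of $\gamma$ with the corresponding subpath of $\beta$. Only the slope estimate and the middle-third bookkeeping use geodesicity, and both survive with multiplicative constants depending on $\qq$.
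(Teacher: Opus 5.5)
Your proposal is correct in outline and follows the paper's route. The paper proves the theorem simply by citing Proposition~\ref{prop3.3} for the forward direction and Proposition~\ref{prop3.4} for the converse. Your extra reduction between the quasi-geodesic $\beta$ and a geodesic representative, which the paper does not address, is fine in the forward direction, where $\beta$ is already known to be Morse.

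For the converse, commit to your fallback of running Lemma~\ref{middle rec quasi geodesic} and the proof of Proposition~\ref{prop3.4} directly on $\beta$. Neither argument uses that the base curve is geodesic, since the middle third is defined metrically. This matters because, contrary to your remark, properness alone does not give a geodesic ray that even sublinearly fellow-travels a quasi-geodesic not yet known to be Morse (think of a slowly rotating quasi-geodesic ray in the Euclidean plane).
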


\section{Sublinearly Morse directions under first passage percolations}
Now we are equipped with the path charaterization of sublinearly Morse directions, we are ready to establish Theorem~\ref{introthm1} in this final section. Recall from Section~\ref{fpp} all relevant background of first passage percolation. In particular recall that we always assume $\nu({0})=0$. The first lemma shows that sublinear neighborhoods are preserved.
\begin{lemma}\label{neighborhood}
Let $X$ be a graph of bounded degree with a fixed base point. Let $\gamma$ be any geodesic ray in $X$ emanating from the base point. Then the set of $\omega$ that maps any sublinear neighborhood of any $\gamma$ in $X$ to a linear neighborhood in $X_\omega$ of $\gamma_\omega$ is a set of measure zero.
\end{lemma}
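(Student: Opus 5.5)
The plan is to show the positive statement behind the lemma. For almost every $\omega$, a $(\kappa,\nn)$--neighbourhood of $\gamma$ in $X$ is carried into a $(\kappa',\nn')$--neighbourhood of $\gamma_\omega$ in $X_\omega$, for some sublinear $\kappa'$ comparable to $\kappa$ and some constant $\nn'$. Here $\gamma_\omega$ is the vertex set of $\gamma$ with the metric $d_\omega$. In particular, the event that the image is only a linear neighbourhood is null. The two tools are Proposition~\ref{BT1}, which gives linear upper bounds along paths, and Proposition~\ref{extension} together with its corollary, which gives linear lower bounds for paths not too far from $\go$.

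\textbf{Upper bound on $d_\omega(x,\gamma)$.} Let $x\in\calN_\kappa(\gamma,\nn)$, and pick $z\in\gamma$ with $d(x,z)\le \nn\kappa(\Norm x)$. Join $x$ to $z$ by a $d$-geodesic segment $\sigma$. Proposition~\ref{BT1} is stated for a single self-avoiding path, so it does not apply uniformly to the countably many segments $\sigma$. I would instead redo its Borel--Cantelli argument for all geodesic segments of length $n$ starting in the ball $B(\go,Dn)$, exactly as in the proof of Proposition~\ref{extension}. There are at most $(q+1)^{(D+1)n+1}$ such segments. Since the weights are only assumed to have finite mean, a Chernoff bound on $\sum\omega_e$ is not available. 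The plan is to truncate: write $\omega_e\le \omega_e\mathbb 1_{\omega_e\le T}+\omega_e\mathbb 1_{\omega_e>T}$. The bounded part is handled by Hoeffding. For the unbounded part, choose $T$ large so that $\mathbb P(\omega_e>T)$ is tiny, and control the number of heavy edges by a binomial large-deviation estimate.

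\textbf{The main obstacle.} The truncation controls how many heavy edges there are, but not their total weight. I expect this to be the hardest step. If the unbounded part resists, I would fall back on a detour. Using bounded degree and that heavy edges are rare, percolation-type estimates should show that every heavy edge on $\sigma$ can be bypassed within $O(\log n)$ steps through light edges. This yields $\ell_\omega(\sigma)\le K\ell(\sigma)+r(\omega)$ simultaneously for all such $\sigma$, almost surely. Since $\Norm x$ and $n=d(x,z)\le\nn\kappa(\Norm x)$ are related, with $\kappa$ sublinear, the condition $d(\sigma,\go)\le Dn$ fails exactly when $\Norm x$ is large compared to $n$. For that reason the count should be indexed over segments within distance $R$ of $\go$ and of length $n$, with a union bound over $R$ as well. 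The resulting estimate is $d_\omega(x,\gamma_\omega)\le K\nn\kappa(\Norm x)+r(\omega)$.

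\textbf{Comparing norms.} It remains to replace $\kappa(\Norm x)$ by $\kappa'(\Norm{x}_\omega)$, where $\Norm{x}_\omega=d_\omega(\go,x)$. Any $\omega$-geodesic from $\go$ to $x$ starts at $\go$, so Proposition~\ref{extension} applies with $D=0$ and gives $\Norm x\le \ell(\text{path})\le (\Norm{x}_\omega+r_1)/c$. The upper bound, via a $d$-geodesic from $\go$ to $x$, gives $\Norm x_\omega\le K\Norm x+r$. So the two norms are bi-Lipschitz up to additive constants. By concavity and monotonicity of $\kappa$ (as in Lemma~\ref{lemma2.3}), $\kappa(\Norm x)\le C\kappa(\Norm x_\omega)+C$. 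The additive constant $r(\omega)$ is absorbed into the multiplicative constant because $\kappa\ge1$. This proves that the image is a $\kappa$-neighbourhood. Taking a countable intersection over $\nn\in\NN$, and noting that geodesic rays from $\go$ need no separate union since all segment bounds were uniform, gives a full-measure event. Its complement contains the event described in the lemma, which therefore has measure zero.
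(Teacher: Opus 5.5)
Your two-step outline is the paper's. First you bound $d_\omega(x,\gamma_\omega)$ linearly in $d(x,\gamma)\le\nn\kappa(\Norm{x})$. Then you replace $\Norm{x}$ by $\Norm{x}_\omega$ using Proposition~\ref{extension} on the pull-back of an $\omega$-geodesic $[\go,x]_\omega$. Only this lower bound is needed; your extra bound $\Norm{x}_\omega\le K\Norm{x}+r$ is superfluous.

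The paper gets the first step simply by citing Proposition~\ref{BT1}. You are right that this is not legitimate: that proposition controls subsegments of one fixed path, with $r_0$ depending on the path, whereas the segments $[x,z]$ vary with $x$.

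\textbf{The gap.} Your repair cannot succeed, because the uniform bound you aim for is false when $\nu$ only has finite mean. So is the statement itself, and this citation is exactly where the paper's proof breaks.

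Take $X$ the $3$-regular tree, $\nu((t,\infty))=t^{-2}$ for $t\ge 1$ (mean $2$, no atom at $0$), $\kappa(t)=\sqrt{t}+1$, and a fixed ray $\gamma$.
\begin{itemize}
\item The subtree hanging off $\gamma(R)$ has about $2^{\sqrt{R}}$ edges within depth $\sqrt{R}$. By Borel--Cantelli, almost surely for all large $R$ one of them has weight at least $2^{\sqrt{R}/5}$.
\item Its far endpoint $x_R$ lies in $\calN_\kappa(\gamma,1)$.
\item In a tree every path from $x_R$ to $\gamma$ crosses that edge, and $\Norm{x_R}_\omega=d_\omega(\go,\gamma(R))+d_\omega(\gamma(R),x_R)$.
\item By the strong law, $d_\omega(\go,\gamma(R))\le 3R$ eventually.
\end{itemize}
Hence $d_\omega(x_R,\gamma_\omega)/\Norm{x_R}_\omega\to 1$. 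So almost surely the image of $\calN_\kappa(\gamma,1)$ lies in no sublinear neighbourhood of $\gamma_\omega$.

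Your detour fallback cannot help. A tree has no detours. More generally, in a bounded-degree graph a vertex may have all of its at most $q$ incident edges heavy. Among exponentially many candidate vertices, some will, and no bypass avoids such a vertex.

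\textbf{Further problems, even for light tails.} Your union bound runs over all segments of length $n\approx\nn\kappa(R)$ within distance $R$ of $\go$. That is up to $e^{cR}$ events, against probabilities decaying at best like $e^{-Cn}$ with $n=o(R)$, which is not summable. You must instead count only segments starting in $\calN_\kappa(\gamma,\nn)$ for a fixed $\gamma$; there are at most $(2\nn\kappa(R)+1)(q+1)^{\nn\kappa(R)+1}$ of them with starting point of norm $R$.

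That restriction kills your remark that no union over rays is needed, and uniformity in $\gamma$ is in fact false. On a tree with exponential weights, one can choose, depending on $\omega$, a ray that passes infinitely often next to an edge of weight at least $\epsilon R$. Meanwhile all root paths of length $R$ have $\omega$-length $O(R)$.

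\textbf{What can be proved.}
\begin{itemize}
\item If $\nu$ has bounded support, then $d_\omega\le(\sup\mathrm{supp}\,\nu)\,d$ deterministically. Your norm comparison then finishes the argument, uniformly in $\gamma$.
\item If $\nu$ has an exponential moment, then for each fixed $\gamma$ a Chernoff bound at threshold $K(\nn\kappa(R)+\log R)$ beats the count above. This gives the statement with $\kappa$ replaced by $t\mapsto\kappa(t)+\log(1+t)$.
\end{itemize}
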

\begin{proof}
Suppose $\calN_\kappa(\gamma,n)$ be a sublinear neighborhood of $\gamma$ in $X$ for some sublinear function $\kappa$ and gauge $n$. Let $x \in \calN_\kappa(\gamma,n)$. Then by Proposition ~\ref{BT1}, \[d_\omega(x, \gamma) \leq b \cdot n \cdot \kappa(||x||) + r_o(\omega).\]
Consider $[\go,x]_\omega$ be the $\omega$-geodesic in $X_\omega$. Then the preimage of  $[\go,x]_\omega$ in $X$ satisfies the assumption of Proposition ~\ref{BT2}. Therefore, we get \[||x||_\omega=\ell_\omega([\go,x]_\omega) \geq c \cdot ||x||-r_1(\omega).\]
Hence for a.e. $\omega$ we have, 
\begin{align*}
d_\omega(x,\gamma) \leq  & \frac{b\cdot n}{c} \cdot \kappa(||x||_\omega+r_1(\omega))+r_0(\omega)\\
& \leq \bigg(\frac{b\cdot n}{c}+r_0(\omega)\bigg) \cdot \kappa'(||x||), \text{ where, $\kappa'(t)= \kappa(t+r_1(\omega))$.}
\end{align*}
Therefore, almost surely, FPP sends a sublinear neighborhood of a geodesic ray in $X$ to a sublinear neighborhood of the image ray in $ X_\omega$.


\end{proof}
\begin{theorem} \label{37}
The image of a $\kappa$-Morse geodesic is almost surely sublinearly Morse. Specifically, let $\gamma$ be a $\kappa$-Morse geodesic ray emanating from a fixed base point $\go$ in $X$. Then for a.e. $\omega$ there exists $\kappa'$, depending on $\kappa$ and $\omega$ such that $\gamma_\omega$ is $\kappa'$-Morse.
\end{theorem}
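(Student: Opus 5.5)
The plan is to use the path characterization: show that $\gamma_\omega$ (the image of $\gamma$ in $X_\omega$, or an $\omega$-geodesic ray near it) is middle-third recurrent in $X_\omega$ with respect to a sublinear function $\kappa'$ depending on $\kappa$ and $\omega$. Proposition~\ref{prop3.4} then gives that $\gamma_\omega$ is sublinearly Morse. The point is that middle recurrence is a statement about arbitrary paths of bounded slope, and first passage percolation distorts lengths of paths only by controlled linear factors plus an additive random constant. By contrast, it does not preserve quasi-geodesics in a usable way.

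First I fix a full-measure set of $\omega$ on which Proposition~\ref{BT1} holds for $\gamma$ with constant $r_0(\omega)$, Proposition~\ref{extension} holds for every $D\in\NN$ with constants $c(D)$ and $r_1(\omega,D)$, and Lemma~\ref{neighborhood} holds. This is a countable intersection of full-measure sets. Along $\gamma$ we then have $d(a,b)/C_0 - Q \le d_\omega(a,b)\le 2b\,d(a,b)+r_0$ for $a,b\in\gamma$, by the Corollary after Proposition~\ref{extension}. So $\gamma_\omega$ is a quasi-geodesic in $X_\omega$, and the middle third of $\gamma_\omega[a,b]$ measured in $d_\omega$ lies within a fixed fraction-window of the $d$-middle third. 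To absorb this mismatch, I will prove recurrence to a slightly smaller central subsegment in $X$; Proposition~\ref{prop3.3} with the constant $\tfrac13$ replaced by a smaller fraction goes through the same way.

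Next let $P$ be a path in $X_\omega$ with endpoints $a,b\in\gamma$ and $\ell_\omega(P)\le C\,d_\omega(a,b)$. I regard $P$ as a combinatorial path in $X$ and bound its $d$-length by a linear function of $d(a,b)$, so that its $X$-slope is bounded. The hypothesis of Proposition~\ref{extension} requires $d(P,\go)\le D\,\ell(P)$. This holds with $D=1$ after discarding the finitely many short paths near $\go$ (or after noting $d(a,\go)\le$ something linear, handled by a case split when $\ell(P)$ is small compared to $\|a\|$). I expect this step to be the \textbf{main obstacle}: paths far from the basepoint but short relative to their distance to $\go$ are not covered by Proposition~\ref{extension} with a fixed $D$. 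What I would try first is to apply Proposition~\ref{extension} with $D$ growing, using that $\ell(P)\ge d(a,b)$. If that fails, I would localize, using that $c(\param)$ is decreasing and that the relevant $D$ can be taken as $\|a\|/d(a,b)$, which is unbounded. In that case I would instead lose only sublinearly by comparing to $\kappa(\|a\|)$: when $d(a,b)\lesssim\kappa(\|a\|)$, the conclusion is trivial, since the endpoint $a$ already lies in the $c\kappa$-neighborhood of the middle third.

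Granting the slope bound, I get $\ell(P)\le \frac{1}{c}(C(2b\,d(a,b)+r_0)+r_1)\le C'd(a,b)$ for $d(a,b)$ large, and small $d(a,b)$ is handled by enlarging constants. Proposition~\ref{prop3.3} then yields a point of $P$ in $\calN_{\kappa''}(\gamma_{\frac13[a,b]},c(C'))$ in $X$. Finally, Lemma~\ref{neighborhood} transports this sublinear neighborhood to a sublinear neighborhood $\calN_{\kappa'}$ in $X_\omega$, with $\kappa'(t)=\kappa''(t+r_1)$. Combined with the comparison of middle thirds above, this gives middle recurrence of $\gamma_\omega$ in $X_\omega$, and Proposition~\ref{prop3.4} concludes.
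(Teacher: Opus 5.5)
Your ingredients are the paper's: bounded slope is transferred from $X_\omega$ to $X$ via Proposition~\ref{extension}, then Proposition~\ref{prop3.3} is applied in $X$, then Lemma~\ref{neighborhood} brings the conclusion back to $X_\omega$. But the step you flag as the main obstacle is a genuine gap, and it already affects your comparison of $d$ and $d_\omega$ along $\gamma$.

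\begin{itemize}
\item Proposition~\ref{BT1} only controls $\ell_\omega(\gamma[i,j])$ for segments containing $\gamma(0)$, i.e.\ initial segments of the ray.
\item Proposition~\ref{extension}, and hence the Corollary, requires $d(\text{path},\go)\le D\,\ell(\text{path})$.
\item A lower bound on $d_\omega(a,b)$ must be applied to the $\omega$-geodesic from $a$ to $b$, not to $\gamma[a,b]$.
\end{itemize}
For $a,b\in\gamma$ with $d(a,b)\ll\Norm{a}$, none of this applies with fixed constants. In fact the two-sided bound $d(a,b)/C_0-Q\le d_\omega(a,b)\le 2b\,d(a,b)+r_0$ is false in general. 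If $X$ is a tree then $d_\omega(a,b)=\ell_\omega(\gamma[a,b])$. If $\nu$ has unbounded support, some far edge of $\gamma$ has weight larger than $2b+r_0(\omega)$, with $b=\mathbb{E}\omega_e$. If $0\in\mathrm{supp}\,\nu$, there are arbitrarily long runs of edges of weight $<1/(2C_0)$.

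So $\gamma_\omega$ is not a uniform quasi-geodesic, and the fixed fraction-window comparison of middle thirds is unavailable at small scales. The same phenomenon blocks the slope transfer for $P$: with $D=\Norm{a}/d(a,b)$ unbounded, $c(D)\to 0$ and $r_1$ depends on $D$. Your case split only disposes of $d(a,b)\lesssim\kappa(\Norm{a})$. It leaves the whole range $\kappa(\Norm{a})\ll d(a,b)\ll\Norm{a}$ untreated. Yet Proposition~\ref{prop3.4} needs recurrence down to that scale, since its proof applies the hypothesis to subpaths whose endpoints are only $12d\,\kappa$ apart.

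The paper sidesteps this by arguing by contradiction. If $\gamma_\omega$ is not sublinearly Morse, one gets $(q,Q)$-quasi-geodesics $\alpha_i$ in $X_\omega$, each with a point $x_i$ linearly far from $\gamma_\omega$. This forces a linear base, $d(s_i,e_i)\ge c_3\, d(\go,s_i)$, which is the first claim in its proof. Only then is Proposition~\ref{extension} applied, with a single $D$, to show that the preimages $\alpha_i^X$ have bounded $X$-slope (Claim~\ref{39}). Proposition~\ref{prop3.3} and Lemma~\ref{neighborhood} are then invoked for the contradiction. Thus only linear-scale configurations ever occur.

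If you want to keep your direct route through Proposition~\ref{prop3.4}, you need an extra diagonal device:
\begin{enumerate}
\item For each $\epsilon=1/k$ and each integer $C$, prove recurrence with constants $c_{k,C}$ for pairs with $d(a,b)\ge\epsilon\Norm{a}$. Bound $\ell_\omega(\gamma[a,b])$ by the initial segment $\ell_\omega(\gamma[0,\Norm{b}])$. For the middle-third comparison, use the strong law of large numbers along the fixed ray $\gamma$.
\item Choose $\epsilon(t)\downarrow 0$ slowly, with $k(t)=\lceil 1/\epsilon(t)\rceil$.
\item Enlarge the sublinear function to $\kappa'(t)\ge\max\{\epsilon(t)\,t,\ \max_{C\le k(t)} c_{k(t),C}\,\kappa''(t)\}$. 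This is still sublinear, and it makes the remaining short segments trivial.
\end{enumerate}
Without one of these two devices, the argument has a hole exactly where you anticipated it.
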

\begin{proof}
Assume $\gamma^{\omega}\subset (X,d_{\omega})$ is not sublinearly Morse. Then there exist $(q,Q)$, a constant $c_{1}>0$, and a family of $(q,Q)$--quasi--geodesic segments 
$\{\alpha_i\}_{i\in\mathbb N}$ with endpoints $(s_i,e_i)$ on $\gamma^{\omega}$ such that for each $i$ there exists a point 
$x_i\in \alpha_i$ satisfying
\[
d_{\omega}(o,x_i)\ge i
\qquad\text{and}\qquad
d_{\omega}(x_i,\gamma^{\omega})\ge c_{1}\, d_{\omega}(o,x_i).
\]
In particular, $\alpha_i$ is not contained in the $c_{1}$--linear neighborhood of $\gamma^{\omega}$.


Now consider the pre images of $\alpha_i$ (say, $\alpha_i^X)$ in $X$. For all $i$, the endpoints of $\alpha_i^X$ on $\gamma$ are $s_i,e_i$ respectively. We make the following claim:
\begin{claim}
Up to an infinite subsequence $\{i_k\}$, the base of $\alpha^X_{i_k}$ are bounded below by $c_3 \cdot d(\go, s_{i_k})$ for some $c_3$, i.e. there exists $c_3$ such that $d(s_{i_k},e_{i_k}) \geq c_3 \cdot d(s_{i_k}, \go). $ 
\end{claim}
\begin{proof} From the assumption we have $x_i \in  \alpha_i$ such that, \[d_\omega(x_i,\gamma_\omega) \ge c_1 \cdot d_\omega(\go,x_i).\]
On the other hand,  $d_\omega(\go,s_i) \le d_\omega(\go,x_i) + d_\omega(x_i,s_i) \le \big(\frac{1+c_1}{c_1}\big)\ d_\omega(x_i,s_i).$
 \text{Therefore,}
\begin{equation} \label{eq:upperbound}
   \ell_\omega(\alpha_i) \ge \frac{c_1}{1+c_1} \cdot d_\omega(\go,s_i).
\end{equation}

By Proposition ~\ref{BT2}, for a.e. $\omega$ we get,
\begin{equation} \label{eq:lowerbound}
    d_\omega(\go,s_i)\ge c \cdot d(\go,s_i)-r_1(\omega).
\end{equation}
    
Since, $\alpha_i$ is a $(q,Q)$ quasi-geodesic, we have,
\begin{equation} \label{eq:lengthbound}
 q \cdot d_\omega(s_i,e_i)+Q \ge \ell_\omega(\alpha_i).   
\end{equation}
Now, by combing Equations ~\ref{eq:upperbound}, ~\ref{eq:lowerbound}, ~\ref{eq:lengthbound} and using Proposition ~\ref{BT1}, there exists two constants $L_1,L_2$ (which depends on $\omega$) such that, \[d(s_i,e_i) \ge L_1 \cdot d(\go,s_i)-L_2.\]
By assumption, $d(s_i,\go) \rightarrow \infty$ as $i \rightarrow\infty.$ Therefore, for large $i$, we have \[d(s_i,e_i) \ge c_3\cdot d(\go,s_i).\]

\end{proof}

\begin{claim} \label{39}
The paths $\alpha^X_i$ have bounded slope.
\end{claim}
\begin{proof}
Suppose otherwise, that as $i \to \infty$, the slope of $\alpha^X_i$ tends to infinity i.e. $\frac{\ell(\alpha_i^X)}{d(s_i,e_i)} \rightarrow \infty$. Therefore, there exists $R> 0$ such that $\ell(\alpha_i^X) \ge \frac{1}{c_3}\cdot d(s_i,e_i)$   Since the base grows linearly with respect to $d(\go, s^X_i)$, the lengths of the curves grows superlinearly with respect to the base and distance to the origin. Thus all but finite of $\{ \alpha^X_i \} $ satisfies the condition:
\[ d_X(\go, s^X_i) \leq \ell(\alpha^X_i).\]
By Proposition ~\ref{BT2}, for a.e. $\omega$ we have,
\begin{equation} \label{eq6}
 \ell_\omega(\alpha_i) \ge c \cdot \ell(\alpha_i^X) -r_1
\end{equation}
Hence by Equation ~\ref{eq6} and Proposition ~\ref{BT1}, \[sl(\alpha_i) = \frac{\ell_\omega(\alpha_i)}{d_\omega(s_i,e_i)} \geq \frac{ c \cdot \ell(\alpha_i^X) -r_1}{b \cdot d(s_i,e_i)+r_0} \]
Thus, $sl(\alpha_i)$ grows unboundedly as $i$ goes to infinity because the slope of the curves $\alpha_i^X$ goes to infinity in $X$. This is impossible as $\alpha_i$'s are quasi-geodesics in $X_\omega$. Therefore, it follows that the paths $\{\alpha^X_i\}$ also have bounded slopes.
\end{proof}
Thus the preimages all have bounded slope and by the path characterization, 
they all intersect the middle third of a sublinearly neighborhood. Which contradicts lemma~\ref{neighborhood}. Thus $\gamma_\omega$ is a sublinearly Morse set.
\end{proof}
\begin{remark}
The argument of Claim~\ref{39} makes it clear that we can only establish our main claim for sublinearly Morse boundary as opposed to for a specific $\kappa$-Morse boundary. 
\end{remark}
\subsection{Construction of the bijective map between sublinearly Morse boundaries} \label{Remark1}

First we observe that  a sublinear Morse geodesic ray can be mapped to a sublinearly Morse geodesic ray.
We first observe that one can always find a geodesic ray in a sublinear bound of the  set $\gamma_\omega$. This follows from Theorem 3.8 in \cite{JanaQing1}. 
Furthermore, the construction of the geodesic line is the following: consider a sequence of points on the set
$\gamma_\omega$ whose norm goes to infinity. The sequence $\{ x_i | d(x_i, \go_\omega) > i\}$ exists as $\gamma_\omega$ is an unbounded set. Consider the geodesic images $[\go, x_i]$ and take the limit. We conclude that since $\gamma_\omega$ is sublinearly Morse, by definition there exists a sublinear neighborhood such that every $[\go, x_i]$ ( a (q,0)-quasigeodesic segment with endpoints on $\gamma_\omega$). Hence the limit set $\gamma_0$ is in a sublinear neighborhood of $\gamma_\omega$ and is a geodesic ray we will refer to as a \emph{geodesic representative} of the class $[\gamma_\omega]$. 
\begin{definition}\label{maponray}
Define:
\[f^\star_\omega (\gamma) : = \gamma_0\]
It follows from \cite[Lemma 4.2]{QRT2} that $\gamma_0$ is also sublinearly Morse.
\end{definition}
First it needs to be verified that $f^\star_\omega$ is well-defined.

\begin{lemma} \label{welldefined}
Suppose $\alpha$ and $\beta$ are two geodesics in the same equivalence class $[\gamma]$ (with respect to sublinear tracking), and suppose there are two geodesic representatives $\alpha^\omega$ and $\beta^\omega$ in $X_\omega$ as results of Construction~\ref{Remark1}. Then
    $\alpha_{\omega}$ and $\beta_{\omega}$ are also in same equivalence class in $X_\omega$ i.e. $\frac{d_\omega\big(\alpha_{\omega}(t),\beta_{\omega}(t)\big)}{t} \rightarrow 0$, as $t \rightarrow \infty$.
\end{lemma}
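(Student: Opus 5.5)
The plan is to chain together four sublinear comparisons: from $\alpha_\omega$ to the image set $\alpha$ in $X_\omega$, from $\alpha$ to $\beta$ (first in $X$, then transported to $X_\omega$), and from $\beta$ to $\beta_\omega$. The goal is to show that $d_\omega(\alpha_\omega(t),\beta_\omega(t))$ is bounded by a sublinear function of $t$ plus a constant depending on $\omega$.

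\textbf{Step 1: representatives stay near the image sets.} By the construction in Section~\ref{Remark1}, $\alpha_\omega$ is the limit of $\omega$-geodesics $[\go,x_i]_\omega$ with endpoints on the image of $\alpha$. By Theorem~\ref{37}, that image is $\kappa'$-Morse in $X_\omega$. Hence $\alpha_\omega$ lies in $\calN_{\kappa'}(\alpha,\mm)$ measured in $d_\omega$, and similarly $\beta_\omega$ lies in a sublinear $d_\omega$-neighborhood of $\beta$. This holds for a.e.\ $\omega$, after intersecting the countably many full-measure events involved (one per ray, and one for the constants $r_0,r_1$).

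\textbf{Step 2: $\alpha$ and $\beta$ remain close after FPP.} Since $\alpha$ and $\beta$ are sublinearly Morse rays in the same class, the result of \cite{QRT2} recalled after Definition~\ref{Def:Fellow-Travel} upgrades sublinear fellow travelling to $\kappa$-fellow travelling: $\beta\subset \calN_\kappa(\alpha,n)$ and $\alpha\subset\calN_\kappa(\beta,n)$ in $X$. Lemma~\ref{neighborhood}, which uses Proposition~\ref{BT1} for upper bounds on $d_\omega$ and Proposition~\ref{BT2} to compare $\Norm{x}$ with $\Norm{x}_\omega$, then transfers this to $X_\omega$. So each point of $\alpha$ is within $d_\omega$-distance $C(\omega)\kappa''(\Norm{x}_\omega)$ of $\beta$, with $\kappa''$ sublinear.

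\textbf{Step 3: combine the estimates.} Take $t$ and set $y=\alpha_\omega(t)$. Pick $u\in\alpha$ with $d_\omega(y,u)\le \mm\,\kappa'(\Norm{y}_\omega)$, then $v\in\beta$ close to $u$, then $w\in\beta_\omega$ close to $v$, each with a sublinear error. Lemma~\ref{lemma2.3} keeps the arguments comparable, so all errors are $O(\kappa'''(t))$. Thus $d_\omega(y,w)=o(t)$ and $\Norm{w}_\omega=t+o(t)$. Because $\beta_\omega$ is a geodesic ray from $\go$, the point $\beta_\omega(t)$ satisfies $d_\omega(w,\beta_\omega(t))=|\Norm{w}_\omega-t|=o(t)$. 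Dividing by $t$ and using the triangle inequality gives the limit $0$.

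\textbf{Main obstacle.} I expect Step 2 to be the hardest. Lemma~\ref{neighborhood} as stated concerns the neighborhood of a single ray, whereas here I need distances from points of $\alpha$ to the set $\beta$ in $d_\omega$. Its proof does apply, since Proposition~\ref{BT1} bounds $d_\omega(x,\beta)$ through the $X$-geodesic from $x$ to its nearest point on $\beta$. The difficulty is that Proposition~\ref{BT1} is stated for a fixed self-avoiding path, while here the short connecting segments vary with $x$.

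To handle this, I would apply Lemma~\ref{lemma2.13} together with a Borel--Cantelli count over all paths of length $m$ starting in a ball $B(\go,R)$. This should give, almost surely, $\ell_\omega\le 2b\,m + o(R)$ uniformly, or at least a linear bound $\ell_\omega\le K m+o(R)$ for such paths. A linear bound is enough, because the segments have length $O(\kappa(R))$, which is sublinear in $R$. If this uniform estimate fails, my fallback is to bound $d_\omega$ instead along subpaths of $\alpha$, $\beta$ and the connecting geodesics, which are countably many fixed paths, and apply Proposition~\ref{BT1} to each.
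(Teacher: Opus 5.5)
Your architecture matches the paper's. The paper also argues by transitivity, $\alpha_\omega\sim\alpha^\omega\sim\beta^\omega\sim\beta_\omega$, with the outer links supplied by Construction~\ref{Remark1} and Theorem~\ref{37}. For the middle link, however, the paper neither upgrades to $\kappa$-fellow travelling nor uses Lemma~\ref{neighborhood}. It takes an $X$-geodesic $\gamma_t$ from $\alpha(t)$ to $\beta(t)$ and writes $d_\omega(\alpha(t),\beta(t))\le|\gamma_t|_\omega\le b\,d(\alpha(t),\beta(t))+r_0$, which is $o(t)$ by plain sublinear fellow travelling. This also suffices for your Step 3: if $u=\alpha(s)$ is near $\alpha_\omega(t)$, Proposition~\ref{BT2} gives $s=O(t)$. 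So the appeal to \cite{QRT2} is unnecessary. Your Step 3 bookkeeping (matching the $X$-parameter with $\omega$-norms) is actually more careful than the paper's.

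The obstacle you single out is genuine, and it is exactly the point the paper passes over. Proposition~\ref{BT1} controls one fixed path with one random constant $r_0(\omega)$. The paper nonetheless applies it to the varying family $\gamma_t$, and the proof of Lemma~\ref{neighborhood} does the same.

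Your proposed repair, however, fails.
\begin{itemize}
\item Lemma~\ref{lemma2.13} bounds $\mathbb{P}(\ell_\omega(\gamma)\le\epsilon\,\ell(\gamma))$, the probability that a path is too \emph{short}. It says nothing about paths being too long.
\item Under only $\mathbb{E}\,\omega_e<\infty$, upper deviations of $\ell_\omega$ need not be exponentially small, since a single heavy edge suffices. So no union bound over the exponentially many paths starting in $B(\go,R)$ can work.
\item The uniform estimate you aim for is in fact false. If $X$ has exponential growth and $\nu((x,\infty))\asymp x^{-1-\delta}$, then almost surely, for all large $R$, some single edge in $B(\go,R)$ has weight exponentially large in $R$.
\item Your fallback does not help either. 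Applying Proposition~\ref{BT1} separately to each of countably many paths $\sigma$ gives a constant $r_0(\omega,\sigma)$ that depends on the path, with no uniformity in $\sigma$. So it does not yield $d_\omega(\alpha(t),\beta(t))=o(t)$.
\end{itemize}

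What is actually needed is an almost-sure bound on $d_\omega$ for the fixed countable family of pairs $(\alpha(n),\beta(n))$, not a bound on $\ell_\omega$ of every path in a ball. Since $d_\omega$ is an infimum, heavy edges may be bypassed. If $\nu$ has bounded support, this is immediate, because $\ell_\omega\le(\sup\operatorname{supp}\nu)\,\ell$ deterministically, and your argument goes through. Under the paper's finite-mean hypothesis an additional argument is required, and neither your proposal nor the paper supplies it.
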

\begin{proof}
   Let $\alpha^\omega$ and $\beta^\omega$ be the image of $\alpha$ and $\beta$ under a first passage percolation with the stated assumptions. Suppose $\alpha$ and $\beta$ are in the same class. Let $\gamma_t$ be a geodesic joining  $\alpha(t)$ and $\beta(t)$. Therefore by Proposition ~\ref{BT1} we have,  \[d_\omega(\alpha(t),\beta(t)) \leq |\gamma_t|_\omega \leq b\cdot d(\alpha(t),\beta(t))+r_0\] implies,\[\frac{d_\omega(\alpha(t),\beta(t))}{t}\leq b\cdot \frac{d(\alpha(t),\beta(t))}{t}+\frac{r_0}{t}\]
   Since $[\alpha]=[\beta]$, then $\frac{d_\omega(\alpha(t),\beta(t))}{t}\rightarrow0$, as $t \rightarrow\infty$. Hence, $\alpha^\omega \sim \beta^\omega$ in $X_\omega$. From the construction in line ~\ref{Remark1} we get $\alpha_{\omega}$ and $\beta_{\omega}$ refers to the geodesic representative of the class $[\alpha^\omega]$and $[\beta^\omega]$ in $X_\omega$. Since,   by transitivity we have $\alpha_{\omega}$ and $\beta_{\omega}$ are in same class in $X_\omega$.
\end{proof}

Now are ready produced the map constructed so far that is well-defined.
\begin{definition}[First passage percolation induces a map from $\ps X$ to $\ps X_\omega$.]\label{themap}
 From the Theorem ~\ref{37}, Construction$~\ref{Remark1}$, for almost every $\omega \in (\Omega, \mathbb{P})$ there is a  map $f_\omega^{\star}$ that takes sublinear geodesic rays to sublinear geodesic rays. Lemma~\ref{welldefined} shows that the maps preserves sublinear tracking of geodesic rays. Therefore we use the same notation to denote the associated map on a sublinear equivalence class  for every $\kappa$-Morse geodesic $\alpha$ in $X$ starting at $\mathfrak{o}$. Define:
 \[f_\omega^{\star}([\alpha]):=[\alpha^\omega].\]

\end{definition}

\begin{lemma} \label{oneone}
    The map $f_\omega^{\star}$  is one-to-one.
\end{lemma}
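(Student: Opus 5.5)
We argue by contraposition: if $f_\omega^\star([\alpha]) = f_\omega^\star([\beta])$ for two sublinearly Morse geodesic rays $\alpha,\beta$ from $\go$, then $[\alpha]=[\beta]$ in $\ps X$. Throughout we fix $\omega$ in the full-measure set on which Proposition~\ref{BT1}, Proposition~\ref{BT2}, Lemma~\ref{neighborhood} and Theorem~\ref{37} all hold for both rays. We may also enlarge $\kappa$ so that $\alpha$ and $\beta$ are both $\kappa$-Morse for a common $\kappa$, for instance the maximum of the two sublinear functions.

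\textbf{Step 1: the $\omega$-images sublinearly fellow travel.} By construction, the geodesic representative $\alpha_0$ of $f^\star_\omega(\alpha)$ lies in a sublinear neighborhood of $\alpha^\omega$ in $X_\omega$, and likewise $\beta_0$ lies in a sublinear neighborhood of $\beta^\omega$. By hypothesis $\alpha_0$ and $\beta_0$ sublinearly fellow travel. Composing these three sublinear errors, and using Lemma~\ref{lemma2.3} to compare $\kappa$ at nearby points, we get: for every $x\in\alpha$ there is $y\in\beta$ with
\[ d_\omega(x,y)\le \kappa''(\Norm{x}_\omega) \]
for some sublinear $\kappa''$ depending on $\omega$.

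\textbf{Step 2: pull the distance back to $X$.} Let $p$ be an $\omega$-geodesic from $x$ to $y$, viewed as a path in $X$. Its starting point $x$ satisfies $d(x,\go)=\Norm{x}$, so $d(p,\go)\le \Norm{x}$. This gives two cases.

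If $\ell(p)\ge \Norm{x}$, then $d(p,\go)\le \ell(p)$, so Proposition~\ref{BT2} with $D=1$ yields
\[ \ell(p)\le \frac{\ell_\omega(p)+r_1}{c}. \]

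If $\ell(p)<\Norm{x}$, we cannot apply Proposition~\ref{BT2} directly. Instead we compare with $D$ large, or split $p$ into pieces and use the linear lower bound $d(x,\go)\ge \Norm{x}_\omega/(2b)-r_0$ that comes from Proposition~\ref{BT1} along $[\go,x]$.

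In either case we aim for
\[ d(x,y)\le \ell(p)\le C(\omega)\,\kappa''\bigl(2b\Norm{x}+r_0\bigr)+C(\omega). \]
Since $\Norm{x}_\omega\le 2b\Norm{x}+r_0$ by Proposition~\ref{BT1}, the right-hand side is sublinear in $\Norm{x}$.

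\textbf{Step 3: conclude.} Taking $x=\alpha_r$, the point $\alpha_r$ is within sublinear $d$-distance of some $y\in\beta$. Then $\Norm{y}=r+o(r)$, so $d(\alpha_r,\beta_r)=o(r)$, and $\alpha$ and $\beta$ sublinearly fellow travel. Hence $[\alpha]=[\beta]$. Since both rays are $\kappa$-Morse, the result of \cite{QRT2} quoted before the definition of $\pka X$ upgrades this to $\kappa$-fellow traveling if needed.

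\textbf{Main obstacle.} The hardest step is Step 2: converting $d_\omega$-closeness back into $d$-closeness. Proposition~\ref{BT2} only bounds lengths of paths that are not too far from $\go$ relative to their own length. A short $\omega$-geodesic far from the origin need not satisfy $d(p,\go)\le D\,\ell(p)$ for a fixed $D$, so we cannot apply it with uniform constants. My first attempt is to take $D$ depending on the scale and use that $c(D)$ decreases only as $\beta(q^{-(D+1)})$. If that decay is too fast, the fallback is to avoid the issue by contradiction. Suppose $d(\alpha_{r_i},\beta)\ge \epsilon r_i$ along a sequence $r_i\to\infty$. Build the concatenated path $[\alpha_{r_i},x]_\omega\cdot[x,y]_\omega\cdot[y,\beta_{s_i}]_\omega$, which has small $\omega$-slope; its preimage has length of order $r_i$, so Proposition~\ref{BT2} applies with $D=1$ and gives bounded $X$-slope. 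This path has endpoints $\alpha_{r_i}$ and $\beta_{s_i}$ that are linearly separated, while it stays linearly close to both rays. Then derive a contradiction with the Morse property of $\alpha$ via the middle recurrence of Proposition~\ref{prop3.3} applied to the geodesic line formed by $\alpha\cup\beta$, mirroring the argument of Claim~\ref{39}.
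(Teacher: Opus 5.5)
Your skeleton matches the paper's: pull back an $\omega$-geodesic joining $\omega$-close points of the two images, and bound its $X$-length with Proposition~\ref{BT2}. The paper does this for the projections $s_{r_n},t_{r_n}$ and simply asserts the sublinear bound. However, your Step 2, which you correctly flag as the crux, is not closed. The case $\ell(p)<\Norm{x}$ is left to unspecified alternatives. The bound you aim for, $\ell(p)\le C(\omega)\kappa''(2b\Norm{x}+r_0)+C(\omega)$ with a \emph{fixed} sublinear function, is more than Proposition~\ref{BT2} can give, since the paper provides no quantitative control of $c(D)$ and $r_1(D,\omega)$ as $D\to\infty$. The fallback also fails. $\alpha\cup\beta$ is a union of two rays issuing from $\go$, not a geodesic line, and it is not known to be a quasi-geodesic or sublinearly Morse. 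So Proposition~\ref{prop3.3} cannot be applied to it, and you never identify which point of the concatenated path would violate the middle-third conclusion.

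The missing observation is that sublinear fellow travelling only needs $d(x,y)=o(\Norm{x})$, and that comes for free. Take $\omega$ in the intersection, over $D\in\NN$, of the full-measure sets of Proposition~\ref{BT2}. Fix $D$. If $\ell(p)\ge\Norm{x}/D$, then $d(p,\go)\le\Norm{x}\le D\,\ell(p)$, so $d_\omega(x,y)=\ell_\omega(p)\ge c(D)\Norm{x}/D-r_1$. This contradicts $d_\omega(x,y)=o(\Norm{x})$ once $\Norm{x}$ is large; that estimate holds because $\Norm{x}_\omega\le 2b\Norm{x}+r_0$ along $\alpha$. Hence $d(x,y)\le\ell(p)<\Norm{x}/D$ eventually, for every $D$. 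Your Case 1 is exactly this with $D=1$: it simply never occurs for large $\Norm{x}$.

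There is a second, smaller flaw in Step 1. The construction gives $\alpha_0\subset\calN(\alpha^\omega)$, but ``for every $x\in\alpha$'' needs the reverse inclusion, $\alpha^\omega$ inside a sublinear neighborhood of $\alpha_0$. In the paper that is Lemma~\ref{kappaboundinFPP}, which is proved afterwards and uses injectivity. You can sidestep it as the paper does:
\begin{itemize}
\item Let $s_n,t_n$ be $\omega$-projections of $\alpha_0(n),\beta_0(n)$ onto $\alpha,\beta$.
\item The argument above gives $d(s_n,t_n)=o(\Norm{s_n})$.
\item Applied to $s_n,s_{n+1}$, it gives $\bigl|\Norm{s_{n+1}}-\Norm{s_n}\bigr|=o(\Norm{s_n})$.
\item $\Norm{s_n}\to\infty$ linearly, by Proposition~\ref{BT1} along $\alpha$.
\end{itemize}
So the $s_n$ are $o(r)$-dense on $\alpha$, and $d(\alpha_r,\beta)=o(r)$ follows. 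This also makes rigorous the paper's vague sampling step.
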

\begin{proof}
Suppose $f_\omega^\star([\alpha])=[\alpha_\omega]$ and  $f_\omega^\star([\beta])$=$[\beta_{\omega}]$ represents the same point in $\partial_sX_\omega$, where $\alpha,\beta$ are  geodesics in $X$, starting at $\go$. We aim to show that $[\alpha]=[\beta]$ in $\partial_sX$. Since $ [\alpha_\omega]=[\beta_\omega]$ then $\frac{d_\omega\big(\alpha^{\omega}_r,\beta^\omega_r\big)}{r} \rightarrow 0$ as $r\rightarrow \infty$. Suppose by way of contradiction, $\alpha$ and $\beta$ do not fellow travel each other sublinearly in $X$.
Now, consider $\{r_n\}_n$, a sequence of natural number such that for each $n\in \mathbb{N}$,  $\alpha^\omega_{r_n}:=\alpha^\omega(t_{r_n}), \text{and } \beta^\omega_{r_n}:=\beta^\omega(t_{r_n})$ be two points on $\alpha_\omega$ and $\beta_\omega$, where, $t_{r_n}$ be the first time where $||\alpha^\omega(t_{r_n})||_\omega=||\beta^\omega(t_{r_n})||_\omega=r_n.$ Let $s_{r_n},t_{r_n}$ be the nearest point projection of $\alpha^\omega_{r_n}$ and $\beta^\omega_{r_n}$ on $f_\omega(\alpha)$ and $f_\omega(\beta)$ respectively. Hence we have, $||s_{r_n}||_\omega=2||\beta^\omega_{r_n}||_\omega \leq 2r_n$ and similiarly, $||t_{r_n}||_\omega\leq 2r_n.$
By definition ~\ref{themap}, \[d_\omega(\alpha^\omega_{r_n},s_{r_n}) \leq m_1 \cdot \kappa(r_n) \text{  and }d_\omega(\beta^\omega_{r_n},t_{r_n}) \leq m_2 \cdot \kappa'(r_n).\]
Therefore, \[r_n-m_1\cdot \kappa(r_n) \leq ||s_{r_n}||_\omega \leq 2r_n  \text{ and  } r_n-m_2\cdot \kappa'(r_n) \leq ||t_{r_n}||_\omega \leq 2r_n .\]
If we see $s_{r_n}, t_{r_n}$ in $X,$ then by Proposition ~\ref{BT1} and ~\ref{BT2}, \begin{align*}
\frac{r_n-m_1\cdot \kappa(r_n)-r_0}{2b} &\leq ||s_{r_n}|| \leq \frac{2r_n+r_1}{c}, \text{ and  } \\
 \frac{r_n-m_2\cdot \kappa'(r_n)-r_0}{2b} &\leq ||s_{r_n}|| \leq \frac{2r_n+r_1}{c}.
\end{align*}
Therefore, we have $\frac{c}{4b}\leq \lim_{n\to\infty}\frac{||s_{r_n}||}{||t_{r_n}||} \leq \frac{4b}{c}$. Moreover, $||s_{r_n}||,||t_{r_n}|| \to \infty.$


Let $P_{r_{n}}$ be a geodesic in $X_\omega$ joining $s_{r_{n}}$ and $t_{r_{n}}$ in $X_\omega$. Now, $P_{r_{n}}^\omega$ be the preimage path of $P_{r_{n}}$ joining $s_{r_{n}}$ and $t_{r_{n}}$ in $X$.  Since $s_{r_{n}}$ and $t_{r_{n}}$ have normals of bounded ratio, there exists $C' >0$ such that 
\[d_X( \go,s_{r_{n}})  \leq C' \frac{s_{r_{n}}}{t_{r_{n}}}, \]
By Proposition~\ref{BT2}, the lengths of $P_{r_{n}}^\omega$ are bounded above sublinearly by $\Norm P_{r_{n}}^\omega$.
However, since $\alpha, \beta$ do not sublinearly fellow travel, there exists $C''$ and an infinite collection of radii $\{\tau_i\}$ such that 
\[ d(\beta(\tau_i), \alpha) \geq C'' \Norm {\beta(\tau_i)}. \]
Since $\Norm P_{r_{n}}^\omega$ occurs infinitely many times and so do $\{\tau_i\}$, then $\beta$ travels back and forth from linearly far away from $\alpha$ to sublinearly far away from $\alpha$. This is only possible if the $P_{r_{n}}^\omega$'s (thus accordingly the $s_{r_n}$) are spaced linearly apart. However, since $\alpha^\omega$ and $\beta^\omega$ sublinearly fellow travel each other, any time sampling will render the same result for the associated $P_{r_{n}}^\omega$'s. Thus, pick the initial sequence $\{r_n\}$ such that there are $n_2$ points in any interval $[0, n]$, and we achieve the desired contradiction. 
\end{proof} 

\begin{lemma} \label{onto}
    The map $f_\omega^{\star}$  is onto.
\end{lemma}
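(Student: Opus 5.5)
The plan is to run the argument of Theorem~\ref{37} in reverse: start from a sublinearly Morse direction in $X_\omega$, pull it back to $X$, show the pullback is sublinearly Morse in $X$, and check that $f^\star_\omega$ sends the resulting class back to the one we started with. Let $\mathbf{c}\in\partial_s X_\omega$ and let $\eta$ be a geodesic representative of $\mathbf{c}$ in $X_\omega$ starting at $\go$; say $\eta$ is $\kappa$-Morse in $X_\omega$. Since $X$ and $X_\omega$ have the same vertex set, $\eta$ is a path of vertices in $X$, which I call $\eta^X$. First I would show $\eta^X$ is coarsely a quasi-geodesic ray in $X$. For vertices $u,v$ on $\eta$ with $\Norm{u}\le\Norm{v}$, Proposition~\ref{BT1} (applied to an $X$-geodesic from $u$ to $v$) gives $d_\omega(u,v)\le 2b\,d(u,v)+r_0$. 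For the other direction, take the $X$-path underlying $\eta_{[u,v]}$; its distance to $\go$ is at most $\Norm{u}$, which is linearly controlled by its length, so Proposition~\ref{BT2} gives $d_\omega(u,v)=\ell_\omega\ge c\,\ell(\eta^X_{[u,v]})-r_1\ge c\,d(u,v)-r_1$. Hence $\eta^X$ is a $(q(\omega),Q(\omega))$-quasi-geodesic ray in $X$, as in the corollary following Proposition~\ref{BT2}.

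Next I would prove that $\eta^X$ is sublinearly Morse in $X$, using the symmetric version of the proof of Theorem~\ref{37}. Suppose it is not. Then there are $(q',Q')$-quasi-geodesics $\alpha_i$ in $X$ with endpoints $s_i,e_i$ on $\eta^X$ and points $x_i\in\alpha_i$ with $d(x_i,\eta^X)\ge c_1\Norm{x_i}$ and $\Norm{x_i}\to\infty$. The first claim in the proof of Theorem~\ref{37} transfers verbatim, with the roles of $d$ and $d_\omega$ exchanged via Propositions~\ref{BT1} and \ref{BT2}, so along a subsequence the bases satisfy $d(s_i,e_i)\gtrsim\Norm{s_i}$. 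Each $\alpha_i$ is a quasi-geodesic near $\go$ at linear scale, so Proposition~\ref{BT1} bounds its $\omega$-length above linearly, and Proposition~\ref{BT2} bounds $d_\omega(s_i,e_i)$ below linearly. Together these say the paths $\alpha_i$ have bounded $\omega$-slope in $X_\omega$. By the middle recurrence of $\eta$ in $X_\omega$ (Proposition~\ref{prop3.3}), each $\alpha_i$ meets a $\kappa'$-neighborhood of the middle third of $\eta_{[s_i,e_i]}$ in the $d_\omega$ metric. Applying Lemma~\ref{neighborhood} in the reverse direction then shows that this neighborhood is also a sublinear neighborhood of $\eta^X$ in $X$. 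To reach a contradiction I would use Lemma~\ref{middle rec quasi geodesic} together with the proof of Proposition~\ref{prop3.4}: once middle recurrence is transferred to $\eta^X$ for paths of bounded slope, the argument there gives $\alpha_i\subset\calN_{\kappa''}(\eta^X,\cdot)$, contradicting $d(x_i,\eta^X)\ge c_1\Norm{x_i}$.

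Finally I would identify the image. Let $\gamma$ be a geodesic representative in $X$ of $[\eta^X]$, constructed as in Section~\ref{Remark1} with the roles of $X$ and $X_\omega$ reversed; then $\gamma$ lies in a sublinear neighborhood of $\eta^X$. By the computation in the proof of Lemma~\ref{neighborhood}, $\gamma$ viewed in $X_\omega$ lies in a sublinear neighborhood of $\eta$, and hence sublinearly fellow travels $\eta$. Since $f^\star_\omega(\gamma)$ is a geodesic representative of $[\gamma_\omega]$, Lemma~\ref{welldefined} and transitivity give $f^\star_\omega([\gamma])=[\eta]=\mathbf{c}$, which proves that $f^\star_\omega$ is onto.

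The main obstacle is the second step, because the edge weights are not symmetric. Propositions~\ref{BT1} and \ref{BT2} control $d_\omega$ by $d$ only along fixed paths, almost surely and uniformly for paths linearly close to $\go$, and $X_\omega$ is not itself a graph of bounded degree with i.i.d.\ weights. So I would avoid ``inverting'' the percolation and instead always work with $X$-paths and the two-sided length comparisons for them. The delicate point is checking that the pulled-back middle recurrence indeed holds for bounded-slope paths in $X$ with a sublinear gauge, which means redoing the slope estimate of Claim~\ref{39} with the inequalities reversed.
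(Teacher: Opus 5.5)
\textbf{Where your argument matches the paper.} Up to the point where Proposition~\ref{prop3.3} gives you points $p_i\in\alpha_i$ that are $d_\omega$-sublinearly close to the middle third of $\eta_{[s_i,e_i]}$, your argument is the paper's. Your ``reverse Lemma~\ref{neighborhood}'' is Claim~\ref{313} in contrapositive form: if the $\omega$-geodesic $\sigma$ from $p_i$ to $\eta$ had $X$-length at least $\Norm{p_i}/D$, then $d(\sigma,\go)\le D\,\ell(\sigma)$, and Proposition~\ref{BT2} would make $\ell_\omega(\sigma)$ linear.

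\textbf{The gap is in the closing step.} You rightly notice that $p_i$ need not be the far point $x_i$; the paper simply asserts that $p_i$ lies outside the $c_1$-linear neighbourhood. But you then finish through Lemma~\ref{middle rec quasi geodesic} and the proof of Proposition~\ref{prop3.4}, and that does not work. That argument applies middle recurrence of $\eta^X$ \emph{in $X$} to the concatenations $[x,a']\cdot\pp'\cdot[b',y]$, whose bases are only of order $\kappa(\Norm{a'})$, i.e.\ sublinear in their distance to $\go$. For such paths nothing transfers from $X_\omega$. Proposition~\ref{BT2} needs $d(\gamma,\go)\le D\,\ell(\gamma)$ with $D$ fixed, and $c(D)\to 0$ as $D\to\infty$. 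The slope estimate of Claim~\ref{39} rests entirely on the linear base $d(s_i,e_i)\ge c_3\Norm{s_i}$ from the first claim in the proof of Theorem~\ref{37}, which is exactly what these paths lack.

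This is not a technicality. At distance $R$ from $\go$ the union bound behind Proposition~\ref{BT2} runs over $(q+1)^R$ starting points, so paths of length $\sim\log R$ with badly distorted $\omega$-length can occur. Bounded $X$-slope therefore gives no control on $\omega$-slope at sublinear scales. So ``middle recurrence of $\eta^X$ with a sublinear gauge'' is not available, and Proposition~\ref{prop3.4} (which is in any case stated for geodesic rays) cannot be invoked.

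The same scale issue breaks your Step 1. For $u,v\in\eta$ with $d(u,v)\ll\Norm{u}$, $\Norm{u}$ is not linearly controlled by $\ell(\eta^X_{[u,v]})$, so $\eta^X$ has not been shown to be a quasi-geodesic. The paper sidesteps this by treating the pullback only as a closed set.

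\textbf{How to repair it.} The contradiction with $d(x_i,\eta^X)\ge c_1\Norm{x_i}$ only needs a linear-scale version of that lemma.
\begin{enumerate}
\item Let $\alpha_i'$ be the maximal subpath of $\alpha_i$ through $x_i$ lying outside the $\epsilon c_1$-linear neighbourhood of $\eta^X$. Close it up with $X$-geodesics of length at most $\epsilon c_1\Norm{\cdot}$ to points $u_i,v_i\in\eta^X$.
\item Because $\alpha_i$ is a quasi-geodesic, for large $i$ one gets $d(u_i,v_i)\ge \max\{\Norm{u_i},\Norm{v_i}\}/K$, with $K$ independent of $\epsilon\le 1/2$. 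So your bounded-$\omega$-slope step and Proposition~\ref{prop3.3} go through verbatim.
\item For $\epsilon$ small in terms of $b$, $c$, $q$, $c_1$, the connecting segments are too short in $d_\omega$ to reach the $\omega$-middle third of $\eta_{[u_i,v_i]}$. Hence the middle-recurrence point lies on $\alpha_i'$, at $X$-distance at least $\epsilon c_1\Norm{\cdot}$ from $\eta^X$.
\item Claim~\ref{313}, with $\epsilon c_1$ in place of $c_1$, then gives the contradiction.
\end{enumerate}

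Your Step 3, verifying $f^\star_\omega([\gamma])=[\eta]$, is a real improvement on the paper, which never checks it. Finally, both you and the paper apply Proposition~\ref{BT1}, a statement about one fixed path, to the $\omega$-dependent family $\alpha_i$; that step needs separate justification.
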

\begin{proof}
    Suppose the map is not onto. Therefore, there doesn't exist any $[a] \in \pka X$  such that $f_\omega([a])=[a^\omega]$ for some $\kappa$- Morse geodesic ray $a^\omega$ in $X_w$ starting at $\mathfrak{o}.$ That is, there does not exists any $\kappa$-Morse geodesic ray starting at $\mathfrak{o}$ whose image is in $[a^\omega] $ under $f_\omega$. Hence, for $\gamma^\omega \in [a^\omega]$ the pre image of $\gamma^\omega$ (say $\gamma$) under FPP map is not $\kappa$-Morse. Then there exists $(q, Q)$, $c_1 > 0$ and a family of $(q, Q)$-segments $\{\alpha_i \}$ with endpoints on $\gamma$ such that for each $i \in \NN$, there exists $\alpha_i$ whose starting point is of norm greater than $i$ and $\alpha_i$ is not contained in the $c_1$-linear neighborhood of $\gamma_\omega$. There is an infinite subsequence of {$\alpha_{i_k}$} with $d(s_{i_k},e_{i_k})\geq C\cdot d(s_{i_k},\mathfrak{o})$ for some constant $C$ where $s_{i_k},e_{i_k}$ are the endpoints of $\alpha_{i_k}$ on $\gamma$. By proposition ~\ref{BT2} for a.e. $\omega$  we have $d_\omega(s_{i_k},e_{i_k})\geq c \cdot d(s_{i_k},e_{i_k})-r_1$. Therefore,
    \[\frac{|\alpha_{i_k}^\omega|_\omega}{d_\omega(s_{i_k},e_{i_k})} \leq \frac{b \cdot |\alpha_{i_k}|+r_o}{c \cdot d(s_{i_k},e_{i_k})-r_0}\]
    Since  {$\alpha_{i_k}^\omega$} are quasi-geodesics as $i_k \to \infty$ we have $\frac{|\alpha_{i_k}|}{d(s_{i_k},e_{i_k})}$ is bounded. Hence, for large $i_k$, {$\alpha_{i_k}^\omega$} are paths with endpoints on $\gamma^\omega$ with bounded slope. Since $\gamma^\omega$ is $\kappa$-Morse so by proposition~\ref{prop3.3} $\alpha_{i_k}^\omega$ intersects a sublinear neighborhood of middle third of $\gamma^\omega[s_{i_k},e_{i_k}].$ Suppose $p_{i_k} \in \alpha_{i_k}$ be the point on intersection.  
   \begin{claim} \label{313}
     For a.e $\omega$ there exist $A=A(\omega), B=B(\omega)$ such that, \[\kappa(d_\omega(\mathfrak{o},p_{i_k})\geq d_\omega(p_{i_k},\gamma^\omega_{\frac 13[s_{i_k},e_{i_k}]}) \geq A \cdot d(\mathfrak{o},p)-B\]  
   \end{claim}
    \begin{proof}
    Since  $\alpha_i{_k} \cap \mathcal{N}_{\kappa'} (\gamma^\omega_{\frac 13[s_{i_k},e_{i_k}]}, m) \neq \emptyset$ then there exist $x$ on $\gamma^\omega$ such that, \[ d_\omega(p_{i_k}, \gamma^\omega)=d_\omega(p_{i_k}, x) \leq m \cdot \kappa(d_\omega(\mathfrak{o},p_{i_k}).\] Suppose $\sigma^\omega$ be the geodesic joining $p_{i_k}$ and $x$ in $X_\omega$ and under FPP map the preimage $\sigma$ be the path joining $p_{i_k}$,$x$ in $X$. Since $p_{i_k} \in \alpha_{i_k}$ then $p_{i_k}$ is outside the $c_1$ linear neighborhood. Therefore we get, 
    \[|\sigma|\geq d(p_{i_k},x)\geq d(p_{i_k},\gamma)\geq c_1 \cdot d(\mathfrak{o},p_{i_k}) \geq c_1 \cdot d(\mathfrak{o}, \sigma)\]
    Thus, by proposition ~\ref{BT2} for a.e $\omega$ there exist $r_1=r_1(\omega)$ such that, \[ d_\omega(p_{i_k},x)\geq c \cdot d(p_{i_k},x)-r_1.\] Hence, the claim follows.
    \end{proof}
    By proposition ~\ref{BT1}, ~\ref{313} and monotonicity of $\kappa$ we have, \[\kappa\big(b \cdot d(\mathfrak{o},p_{i_k})+r_o\big) \geq A \cdot d(\mathfrak{o},p_{i_k})-B.\]
    Which is a contradiction.
    \end{proof}
    


\begin{lemma} \label{kappaboundinFPP}
Let $\gamma \in X$ be a sublinearly Morse geodesic ray. Then in $X_\omega$, $\gamma_\omega$ and the geodesic ray $\gamma_0$ are in sublinear neighborhoods of each other.
\end{lemma}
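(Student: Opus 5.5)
We have to show two inclusions: $\gamma_0$ lies in a sublinear neighborhood of $\gamma_\omega$, and $\gamma_\omega$ lies in a sublinear neighborhood of $\gamma_0$. Here $\gamma_0$ is the geodesic representative from Construction~\ref{Remark1}.

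\emph{First inclusion.} This comes from the construction. By Theorem~\ref{37}, for a.e.\ $\omega$ the set $\gamma_\omega$ is $\kappa'$-Morse with some gauge $\mm$. Choose points $x_i\in\gamma_\omega$ with $\Norm{x_i}_\omega\to\infty$. The $\omega$-geodesics $[\go,x_i]_\omega$ are $(1,0)$-quasi-geodesic segments with both endpoints on $\gamma_\omega$, since $\go\in\gamma_\omega$. The $\kappa'$-weakly Morse condition (Definition~\ref{D:k-morse}, II) then gives
\[
[\go,x_i]_\omega\subset\calN_{\kappa'}\big(\gamma_\omega,\mm(1,0)\big)
\]
with a constant independent of $i$. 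Since $X_\omega$ is a locally finite weighted graph with $\nu(\{0\})=0$, balls are finite, so Arzelà–Ascoli (a diagonal argument) gives a subsequence converging on compacta to $\gamma_0$. Every point of $\gamma_0$ is a limit of points on the $[\go,x_i]_\omega$. Closedness of the sublinear neighborhood, together with continuity of $\kappa'$, then yields $\gamma_0\subset\calN_{\kappa'}(\gamma_\omega,\mm(1,0))$.

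\emph{Second inclusion.} This is the main obstacle: a sublinear neighborhood relation is not obviously symmetric. My plan is to use the fact from Definition~\ref{maponray} (via \cite[Lemma 4.2]{QRT2}) that $\gamma_0$ is itself sublinearly Morse, say $\kappa''$-Morse, and then argue by continuity along $\gamma_\omega$.

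Fix $y\in\gamma_\omega$ with $\Norm{y}_\omega=r$ large. By the first inclusion, points of $\gamma_0$ at norm roughly $s$ lie within $\mm\,\kappa'(s)$ of $\gamma_\omega$. Let $\pi(z)\in\gamma_\omega$ denote a closest point to $z$. As $z$ runs along $\gamma_0$ from $\go$ to infinity, $\pi(z)$ moves along $\gamma_\omega$ with jumps that are controlled sublinearly. I will check this by the triangle inequality and Lemma~\ref{lemma2.3}: two nearby points $z,z'$ have projections at distance at most $d(z,z')+O(\kappa'(\Norm z))$.

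Since $\gamma_\omega$ is unbounded along its parametrization, and $\go$ is the starting point of both, some $z\in\gamma_0$ has $\pi(z)$ within $O(\kappa'(r))$ of $y$. I will make this precise using first-entry times in the parametrization of $\gamma_\omega$ rather than its image, so that backtracking of $\gamma_\omega$ is handled. From this,
\[
d_\omega(y,\gamma_0)\le d_\omega(y,\pi(z))+d_\omega(\pi(z),z)=O(\kappa'(r)),
\]
with $\Norm{z}_\omega$ comparable to $r$ by Lemma~\ref{lemma2.3}.

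If the intermediate-value argument turns out delicate, because $\gamma_\omega$ is only a set and not a quasi-geodesic in $X_\omega$, I would fall back on a different route. Take $y$ and a far point $x_i$. The concatenation $[\go,x_i]_\omega$ together with the portion of $\gamma_\omega$ through $y$ forms the relevant configuration, and $y$ is close to $\gamma_0$ by the $\kappa''$-weak Morse property of $\gamma_0$. This needs a quasi-geodesic with endpoints on $\gamma_0$ passing near $y$, which I would build from Propositions~\ref{BT1} and~\ref{BT2}. The image of the $X$-geodesic $\gamma$ restricted between two times is a path whose $\omega$-length is comparable to its $X$-length, hence a uniform quasi-geodesic in $X_\omega$ for a.e.\ $\omega$. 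Its endpoints are moved onto $\gamma_0$ by sublinear amounts using the first inclusion.

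Finally, I would absorb the $\omega$-dependent additive constants $r_0(\omega)$ and $r_1(\omega)$ into a shifted sublinear function $\kappa(t+r_1(\omega))$, as in Lemma~\ref{neighborhood}.
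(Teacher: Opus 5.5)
\paragraph{Verdict.} The first inclusion is correct; the second inclusion has a genuine gap at the crossing step, and the fallback route does not close it.

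\paragraph{The crossing step.} Your first inclusion is fine. It is exactly how the paper places $\gamma_0$ near $\gamma_\omega$. One small correction: local finiteness of $X_\omega$ comes from Proposition~\ref{BT2}, not from $\nu(\{0\})=0$ alone.

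The gap is the crossing step of the second inclusion. Write $y=\gamma(t_0)$ and $\pi(z)=\gamma(\tau(z))$. The intermediate-value argument gives consecutive vertices $z^-,z$ of $\gamma_0$ with $\tau(z^-)<t_0\le\tau(z)$, and your jump estimate makes $\pi(z^-),\pi(z)$ close in $d_\omega$. That says nothing about $d_\omega(y,\pi(z))$. Nothing you have stated rules out the arc $\gamma[\tau(z^-),\tau(z)]$ being a long excursion whose endpoints happen to be $\omega$-close, and first-entry times do not address this.

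\paragraph{How to repair it.} The missing ingredient is a probabilistic no-long-excursion estimate, which you can get from Proposition~\ref{BT2}. Let $\sigma$ be the $X$-path underlying an $\omega$-geodesic from $u$ to $v$. Then $\ell(\sigma)\ge d(u,v)$ and $d(\sigma,\go)\le\Norm{u}$. So for each $D$, either $d(u,v)<\Norm{u}/D$ or $d_\omega(u,v)\ge c(D)\,d(u,v)-r_1(D)$.

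Since this holds almost surely for countably many $D\to\infty$ simultaneously, sublinear $\omega$-distance forces sublinear $X$-distance. Apply this to $z^{\pm}$ and $\pi(z^{\pm})$, recalling that $z^-,z$ are adjacent in $X$. This gives $\tau(z)-\tau(z^-)=d(\pi(z^-),\pi(z))=o(t_0)$. The strong law of large numbers along the fixed ray $\gamma$ then gives $d_\omega(y,\pi(z^-))\le\ell_\omega(\gamma[\tau(z^-),t_0])=o(t_0)$.

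This repairs your main route, but only with a bound $o(\Norm{y}_\omega)$ of unspecified rate, not the $O(\kappa'(r))$ you assert. That weaker bound is still enough, since the lemma only asks for some sublinear neighborhood.

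\paragraph{The fallback.} Your fallback does not help. The $\omega$-length of $\gamma|_{[s,t]}$ being comparable to its $X$-length does not make it a quasi-geodesic in $X_\omega$. That would require the $\omega$-length of every subsegment to be controlled by the $\omega$-distance between its endpoints. Propositions~\ref{BT1} and~\ref{BT2} only control segments starting at $\go$ and paths whose distance to $\go$ is at most $D$ times their length. Short subsegments far out are uncontrolled, and first passage percolation does not preserve quasi-geodesics in general, as the paper itself emphasizes. Also, the first inclusion puts $\gamma_0$ near $\gamma_\omega$, not conversely. So endpoints on $\gamma_\omega$ cannot simply be moved onto $\gamma_0$; they would have to be chosen as projections $\pi(z)$ of points of $\gamma_0$.

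\paragraph{Comparison with the paper.} The paper argues by contradiction. It takes points $c_i\in\gamma_\omega$ linearly far from $\gamma_0$ and forms a second limit ray $\gamma_1$ from $[\go,c_i]_\omega$. It then argues $\gamma_1\sim\gamma_0$ and applies \cite[Lemma 3.4]{QRT2}. Its decisive step is again placing points of $\gamma_\omega$ near a limit ray, which is the same kind of statement your proposal leaves unproved.
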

\begin{proof}
Consider the limit geodesic of $[\go, c_i]$ and call that $\gamma_1$,  By Definition~\ref{maponray},  $\gamma_1$ is a sublienarly Morse geodesic ray. By the construction of  $\gamma_1$, $\gamma_1$ is in a sublinear neighborhood of $\gamma_\omega$. Given $\ep>0$, there exists a subsequence of $c_{ij}$ that are within $\ep$-neighborhood of $\gamma_1$, therefore also in a sublinear neighbohrood of $\gamma_0$, contradicting the choice of the $c_i$.

Suppose $\gamma$ is a $\kappa$-Morse geodesic ray with morse gauge $M$ and $\omega(\alpha)$ is a sublinear Morse set for some sublinear function $\rho$. Now, by Construction~\ref{Remark1}, $\gamma_0$ is the geodesic representative. Let 
\begin{equation}\label{linearnbd} 
  \end{equation}
  denote a $c_1$-\emph{linear neighborhood} of a set $\beta$.
Suppose $\{c_i\}_{i\geq1}$ are the points on $\gamma$ (also on $\omega(\gamma))$ for which Equation~\ref{linearnbd} satifies and $d_\omega(\go,c_{i})\geq i$. Now if we take $[\go,c_i]_\omega$ then the limit set namely $\gamma_1$ is in a $\rho$-neighbourhood of $\omega(\gamma).$ Suppose by the construction the geodesic representative of preimage of $\gamma_1$ under $f_\omega^\star$ is $\alpha$. Then by construction, $\{c_i\}$'s are on $\alpha$. For each $i$, the segment $\alpha|_{[c_i,c_{i+1}]}$ is a geodesic therefore, $\alpha|_{[c_i,c_{i+1}]} \subset \calN_\kappa(\gamma, m), \text{ where $m=M(1,0)$}$. Therefore, $\alpha \subset  \calN_\kappa(\gamma, m)$. This implies $\alpha \sim \gamma$. By the injectivity of $ f_\omega^\star$, $\gamma_0 \sim \gamma_1$ in $X_\omega$. By \cite[Lemma 3.4]{QRT2}, $\gamma_0 \subset \calN_\rho(\gamma_1, m_{\gamma_1}(1,0))$ and $\gamma_1 \subset\calN_\rho(\gamma_0, 2m_{\gamma_1}(1,0))$.
\newline Suppose $x_{ij}$ are the points on $\gamma_1$ such that $d_\omega(x_{ij},c_{ij}) <\epsilon$. Also observe that, $d_\omega(\go,c_{ij})\leq d_\omega(\go,c_{ij})+ \epsilon$.
Hence,
\[c\cdot d_\omega(\go,c_{ij}) \leq d(c_{ij}, \gamma_0) \leq d_\omega(c_{ij},x_{ij})+d(x_{ij},\gamma_0)\]
Which implies, $c\cdot d_\omega(\go,c_{ij}) \leq \epsilon+2m \cdot \rho(d_\omega(\go,x_{ij}))\leq  \epsilon+2m \cdot \rho(d_\omega(\go,c_{ij})+ \epsilon)$. Which is a contradiction to the fact that $d_\omega(c_{ij},\go)\rightarrow \infty$.

\end{proof}

\begin{theorem}\label{invarianttopology}
Let $f_\omega: X \rightarrow X_\omega$ be a first passage percolation where $\nu({0})=0$.  Then for a.e $\omega $ $f_\omega$ induces a homeomorphism 
$f^\star_\omega \from \partial_s X \to \partial_s X_\omega$.
\end{theorem}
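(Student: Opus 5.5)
Since Lemma~\ref{oneone} and Lemma~\ref{onto} already give that $f^\star_\omega$ is a bijection for a.e.\ $\omega$, the remaining work is to show that $f^\star_\omega$ and its inverse are continuous for the topology of Definition~\ref{openset} (extended to $\partial_s X$). We fix once and for all a full-measure set of $\omega$ on which Propositions~\ref{BT1} and~\ref{BT2} hold with constants $b, r_0, c(D), r_1$. Because the paths $\omega$-geodesics pass through only countably many finite paths and the graph is countable, this set can be taken independent of the ray. On this set we also fix Theorem~\ref{37}, Lemma~\ref{neighborhood} and Lemma~\ref{kappaboundinFPP}. The argument is then deterministic. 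As $\partial_s X$ and $\partial_s X_\omega$ are metrizable, sequential continuity suffices in both directions.

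\textbf{Continuity of $f^\star_\omega$.} Let $[\beta_n]\to[\alpha]$ in $\partial_s X$, with $\alpha$ $\kappa$-Morse and $\alpha,\beta_n$ geodesic rays from $\go$. Fix $\rr_\omega$; we must show that eventually every $(\qq,\sQ)$-quasi-geodesic of $f^\star_\omega[\beta_n]$ stays in $\calN_{\kappa'}(\alpha_0,\mm)$ up to radius $\rr_\omega$, where $\alpha_0=f^\star_\omega(\alpha)$ and $\kappa'$ is from Theorem~\ref{37}.

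\begin{enumerate}
\item Choose $\rr$ large compared to $\rr_\omega$. Proposition~\ref{BT2} with $D=1$ shows $\Norm{x}\le(\Norm{x}_\omega+r_1)/c$ along $\omega$-geodesics, so $\rr\approx \rr_\omega/c$ plus constants covers the $\omega$-ball of radius $\rr_\omega$.
\item Convergence $[\beta_n]\to[\alpha]$ gives $\beta_n|_\rr\subset\calN_\kappa(\alpha,\mm_\alpha(1,0))$ for large $n$.
\item The computation in Lemma~\ref{neighborhood} transports this to $\beta_n|_\rr\subset\calN_{\kappa''}(\alpha,\cdot)$ in $X_\omega$.
\item Lemma~\ref{kappaboundinFPP} replaces $\alpha$ by $\alpha_0$ and $\beta_n$ by $(\beta_n)_0$ within sublinear error.
\item A quasi-geodesic of $f^\star_\omega[\beta_n]$ with endpoints near $(\beta_n)_0$ is then controlled by the $\kappa'$-Morse property (Definition~\ref{D:k-morse} I) of $\alpha_0$ from Theorem~\ref{37}.
\end{enumerate}

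The key point is that the strong Morse condition is applied to $\alpha_0$, not $\beta_n$. Once $(\beta_n)_0$ is sublinearly close to $\alpha_0$ at a far radius $R$, every quasi-geodesic in its class is forced into the neighborhood up to $\rr_\omega$.

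\textbf{Continuity of $(f^\star_\omega)^{-1}$.} Run the symmetric argument. Given $f^\star_\omega[\beta_n]\to f^\star_\omega[\alpha]$, use Proposition~\ref{BT1} in place of~\ref{BT2} to convert $\omega$-radii into $X$-radii. Pull back the sublinear neighborhood bound by the estimate of Claim~\ref{313}: a geodesic in $X$ from a point of $\beta_n$ to $\alpha$ has $\omega$-length at least $c\cdot$(its length)$-r_1$, by Proposition~\ref{BT2} applied since its distance to $\go$ is at most linear in its length. A sublinear $\omega$-distance therefore forces a sublinear $X$-distance. Then apply the $\kappa$-Morse property of $\alpha$ in $X$ (Definition~\ref{D:k-morse} I) to place all quasi-geodesics of $[\beta_n]$ near $\alpha$ up to radius $\rr$.

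\textbf{Expected main obstacle.} The hard step is the uniformity in step (4) and in its inverse analogue. Lemma~\ref{kappaboundinFPP} gives, for each ray, a sublinear neighborhood whose constants depend on the ray. For continuity we need these constants bounded uniformly over the $\beta_n$ converging to $\alpha$.

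My first attempt will avoid per-$\beta_n$ constants by working only with the finite segments $\beta_n|_R$. I will use that the geodesic representative $(\beta_n)_0$ is a limit of $\omega$-geodesics $[\go,x_i]_\omega$ with $x_i\in\beta_n$. For $x_i$ beyond radius $R$, these are $(1,0)$-quasi-geodesics whose endpoints lie, via step (2), $\kappa$-close to $\alpha$ near radius $R$. The weak Morse property of $\alpha_0$ (Definition~\ref{D:k-morse} II, with gauge depending only on $\alpha_0$) then bounds $[\go,x_i]_\omega|_{\rr_\omega}$ uniformly in $n$.

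If that fails, the fallback is to take $\rr$ so large that the errors $r_0, r_1$ and the factor $b/c$ are absorbed by the "small compared to $\rr$" condition (Eq.~\ref{Eq:Small}). This is where the loss of a fixed $\kappa$ (cf.\ the Remark after Theorem~\ref{37}) is unavoidable, and why the result is stated for $\partial_s$ rather than $\pka$.
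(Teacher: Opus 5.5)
Your outline matches the paper's proof. Bijectivity comes from Lemmas~\ref{oneone} and~\ref{onto}. Then, in each direction, you get a sublinear bound at one far radius $R$ on the distance from the varying ray to the base ray, move between $X$ and $X_\omega$ with Propositions~\ref{BT1}, \ref{BT2} and Lemma~\ref{kappaboundinFPP}, and feed the bound into condition~I of Definition~\ref{D:k-morse} for the base ray. But the step you single out as the main obstacle is the whole difficulty, and the proposal does not resolve it. Lemma~\ref{kappaboundinFPP} applied to $\beta_n$ only puts $f_\omega(\beta_n)$ and $(\beta_n)_0$ in each other's $\calN_{\rho_n}(\cdot,c_n)$, where $\rho_n,c_n$ depend on $\beta_n$ (through its Morse data and the gauge of $f_\omega(\beta_n)$ from Theorem~\ref{37}). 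Condition~I for $\alpha_0$ needs its sublinear function fixed before $R$ is chosen, so you need a bound on $c_n\rho_n(R)$ that does not grow with $n$. The same dependence appears in three places:
\begin{itemize}
\item Step (5): membership in $\calU(\cdot,\rr_\omega)$ quantifies over every quasi-geodesic ray of $f^\star_\omega[\beta_n]$, and the only link from such a ray to $(\beta_n)_0$ is the Morse gauge of $(\beta_n)_0$. For a quasi-isometry this is harmless, because quasi-geodesics of $[\beta_n]$ are carried to quasi-geodesics of the image class; FPP does not do this.
\item Your inverse direction, when passing from $(\beta_n)_0$ back to $\beta_n$.
\item The paper itself: it simply asserts that the constant $\nn_1$ from Lemma~\ref{kappaboundinFPP} depends only on $\kappa,\kappa',\mm_{b_Y}$, although $\rho'$ and $\nn_1$ there depend on the varying ray $a_X$. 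So the paper does not fill this gap either.
\end{itemize}

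Your first attempt at uniformity is circular. The points $x_i$ whose $\omega$-geodesics $[\go,x_i]_\omega$ converge to $(\beta_n)_0$ must tend to infinity along $\beta_n$. They therefore eventually lie beyond radius $\rr$, where step (2) gives nothing. Those segments have no endpoint near $\alpha_0$ other than $\go$, so condition~II of Definition~\ref{D:k-morse} does not apply. The finitely many segments with $\Norm{x_i}\le\rr$ do end near $\alpha$, but they do not determine the limit. Linking them to $(\beta_n)_0$ requires knowing that the far $\omega$-geodesics pass near $\beta_n$ around radius $R$, and that is precisely the Morse property of $f_\omega(\beta_n)$ with its $n$-dependent gauge. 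The fallback only absorbs $r_0$, $r_1$ and $b/c$, which are already independent of $n$. What you would need is a genuinely uniform estimate: a sublinear function and constant, depending only on $\alpha$ and $\omega$, that control at radius $R$ the $\omega$-geodesics and quasi-geodesics toward every ray that $\kappa$-fellow-travels $\alpha$ up to some $\rr\gg R$.

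Separately, choosing one full-measure set for all rays ``by countability'' is not justified. Proposition~\ref{BT1} and Theorem~\ref{37} are statements about one fixed path or ray, and $\partial_s X$ is in general uncountable. Countability of finite paths yields no uniform $r_0(\omega)$; only Proposition~\ref{BT2} is uniform over paths as stated. The paper passes over the same quantifier issue in Definition~\ref{themap}.
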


\begin{proof}

By Lemma ~\ref{oneone} and Lemma ~\ref{onto}, $f_\omega^\star$ defined in Definition ~\ref{maponray} gives a bijection between $\partial_sX$ 
and $\partial_s Y$. To prove the homeomorphism, we first show that $f_\omega^{\star^{-1}}$ is continuous, and secondly, we will show $f_\omega^\star$ is continuous. 
\newline To prove the first claim, we will use the classical fact that  $f_\omega^{\star}$ is an open map
if and only if the following holds:
For any $\bfb_X \in \ps X$ and $\bfb_Y : =f_\omega^\star(\bfb_X)$ and any $ \rr> 0$, there exists $\rr'$  such that if $\bfa_Y\in \calU_\kappa (\bfb_Y,\rr')$ then there exists $\bfa_X \in \calU_\kappa (\bfb_X,\rr)$ such that \[ (f_\omega^\star)^{-1}(\bfa_Y)=\bfa_X.\]

Assume without loss of generality that $\bfb_X$ is $\kappa$-Morse for some $\kappa$. Let $\bfb_Y = f_\omega^\star(\bfb_X)$. If $b_X$ is the geodesic representative $\bfb_Y$ then $f_\omega(b_X)$ is a $\kappa'$-Morse set for some $\kappa'$ and $b_Y$ is the geodesic representative of $\bfb_Y \in \partial_sX$ from the definition. Therefore by construction $b_Y \subset \calN_{\kappa'}(f_\omega(b_X),\mm_{f_\omega(b_X)})$  
similarly if $\bfa_X\in \partial_sX $ then $a_Y \subset \calN_{\kappa'}(f_\omega(a_X),\mm_{f_\omega(a_X)})$ where $a_X$ is a $\rho$-Morse geodesic representative in $\bfa_X$ whereas the image $f_\omega(a_X)$ is $\rho'$-Morse. By Lemma ~\ref{kappaboundinFPP}, there is a constant $\nn_1$ depending on $\kappa, \kappa'$ and $\mm_{b_Y}$ such that 
\[
f_\omega( a_X) \subset \calN_{\rho'}(a_Y, \nn_1). 
\]
Next, we let 
\[
\nn =  C(r)^{-1}[\nn_1 +2n+4\mm_{f_\omega(b_X)}(1,0)+r_0(\omega)] \cdot (b'+r_0'(\omega)) 
\]
Recall, $r_0(\omega)$ and $b$ are notations defined in Proposition ~\ref{BT1} and Proposition~\ref{BT2} and  $r_0'(\omega)$, $b'$ are constants depending on $b, r_0(\omega).$
and let $\sR = \sR(b_X, \rr, \nn, \kappa')$ as in Definition of sublinear Morse 
choose $\rr'$ such that:
\begin{enumerate}
\item $\rr' \geq 2bR + r_0(\omega) \cdot \kappa(r)$, and
\item $n$ is small  compare to $\rr'$.
\end{enumerate}
\begin{figure}
\tikzset{every picture/.style={line width=0.75pt}} 

\begin{tikzpicture}[x=0.75pt,y=0.75pt,yscale=-1,xscale=1]

\draw    (36,166.4) -- (176.91,167.33) ;
\draw [color={rgb, 255:red, 208; green, 2; blue, 27 }  ,draw opacity=1 ][fill={rgb, 255:red, 208; green, 2; blue, 27 }  ,fill opacity=1 ]   (36,166.4) -- (148.55,76.05) ;
\draw   (256.46,149.96) .. controls (258.96,151.67) and (261.35,153.3) .. (264.35,153.87) .. controls (267.36,154.44) and (270.15,153.79) .. (273.07,153.1) .. controls (275.99,152.42) and (278.78,151.77) .. (281.78,152.34) .. controls (284.79,152.91) and (287.18,154.54) .. (289.68,156.25) .. controls (292.17,157.96) and (294.57,159.6) .. (297.57,160.17) .. controls (300.58,160.74) and (303.37,160.09) .. (306.29,159.4) .. controls (309.2,158.71) and (311.99,158.06) .. (315,158.63) .. controls (318.01,159.2) and (320.4,160.83) .. (322.89,162.55) .. controls (325.39,164.26) and (327.78,165.89) .. (330.79,166.46) .. controls (333.79,167.03) and (336.58,166.38) .. (339.5,165.69) .. controls (342.42,165.01) and (345.21,164.35) .. (348.22,164.92) .. controls (351.22,165.49) and (353.61,167.13) .. (356.11,168.84) .. controls (358.61,170.55) and (361,172.19) .. (364,172.76) .. controls (367.01,173.33) and (369.8,172.67) .. (372.72,171.99) .. controls (375.64,171.3) and (378.43,170.65) .. (381.43,171.22) .. controls (384.44,171.79) and (386.83,173.42) .. (389.33,175.13) .. controls (391.83,176.85) and (394.22,178.48) .. (397.22,179.05) .. controls (400.23,179.62) and (403.02,178.97) .. (405.94,178.28) .. controls (408.85,177.6) and (411.65,176.94) .. (414.65,177.51) .. controls (417.66,178.08) and (420.05,179.72) .. (422.54,181.43) .. controls (425.04,183.14) and (427.43,184.78) .. (430.44,185.34) .. controls (433.44,185.91) and (436.23,185.26) .. (439.15,184.58) .. controls (441.92,183.93) and (444.56,183.31) .. (447.39,183.73) ;
\draw  [color={rgb, 255:red, 208; green, 2; blue, 27 }  ,draw opacity=1 ][dash pattern={on 4.5pt off 4.5pt}] (256.88,150.08) .. controls (259.73,149.13) and (262.45,148.22) .. (264.76,146.15) .. controls (267.07,144.07) and (268.33,141.39) .. (269.64,138.6) .. controls (270.96,135.8) and (272.21,133.12) .. (274.53,131.05) .. controls (276.84,128.97) and (279.56,128.06) .. (282.41,127.11) .. controls (285.27,126.16) and (287.99,125.26) .. (290.3,123.18) .. controls (292.61,121.1) and (293.87,118.43) .. (295.18,115.63) .. controls (296.49,112.83) and (297.75,110.16) .. (300.06,108.08) .. controls (302.37,106) and (305.1,105.09) .. (307.95,104.14) .. controls (310.8,103.2) and (313.53,102.29) .. (315.84,100.21) .. controls (318.15,98.13) and (319.41,95.46) .. (320.72,92.66) .. controls (322.03,89.86) and (323.29,87.19) .. (325.6,85.11) .. controls (327.91,83.03) and (330.63,82.12) .. (333.49,81.17) .. controls (336.34,80.23) and (339.06,79.32) .. (341.37,77.24) .. controls (343.68,75.16) and (344.94,72.49) .. (346.25,69.69) .. controls (347.57,66.89) and (348.82,64.22) .. (351.14,62.14) .. controls (353.45,60.06) and (356.17,59.15) .. (359.02,58.2) .. controls (361.88,57.26) and (364.6,56.35) .. (366.91,54.27) .. controls (369.22,52.19) and (370.48,49.52) .. (371.79,46.72) .. controls (373.1,43.92) and (374.36,41.25) .. (376.67,39.17) .. controls (378.98,37.09) and (381.71,36.18) .. (384.56,35.23) .. controls (387.41,34.29) and (390.14,33.38) .. (392.45,31.3) .. controls (394.76,29.22) and (396.02,26.55) .. (397.33,23.75) .. controls (398.57,21.1) and (399.76,18.56) .. (401.85,16.54) ;
\draw [color={rgb, 255:red, 208; green, 2; blue, 27 }  ,draw opacity=1 ][fill={rgb, 255:red, 208; green, 2; blue, 27 }  ,fill opacity=1 ]   (257.94,150.48) -- (432.61,51.23) ;
\draw  [color={rgb, 255:red, 144; green, 19; blue, 254 }  ,draw opacity=1 ][dash pattern={on 0.84pt off 2.51pt}] (372.74,194.02) .. controls (365.5,134.42) and (337.11,92.71) .. (287.56,68.87) ;
\draw  [line width=3] [line join = round][line cap = round] (90.92,122.39) .. controls (90.92,122.39) and (90.92,122.39) .. (90.92,122.39) ;
\draw  [line width=3] [line join = round][line cap = round] (116.58,167.33) .. controls (116.58,167.18) and (116.28,167.33) .. (116.13,167.33) ;
\draw  [line width=3] [line join = round][line cap = round] (357.88,149.08) .. controls (357.88,149.08) and (357.88,149.08) .. (357.88,149.08) ;
\draw    (203.02,112.09) -- (234.33,112.53) ;
\draw [shift={(236.33,112.56)}, rotate = 180.81] [color={rgb, 255:red, 0; green, 0; blue, 0 }  ][line width=0.75]    (10.93,-3.29) .. controls (6.95,-1.4) and (3.31,-0.3) .. (0,0) .. controls (3.31,0.3) and (6.95,1.4) .. (10.93,3.29)   ;
\draw  [color={rgb, 255:red, 144; green, 19; blue, 254 }  ,draw opacity=1 ][dash pattern={on 0.84pt off 2.51pt}] (126.49,209) .. controls (119.25,149.41) and (90.86,107.69) .. (41.31,83.85) ;
\draw    (257.94,150.48) -- (424.73,148.27) ;

\draw (153.97,60.12) node [anchor=north west][inner sep=0.75pt]  [font=\scriptsize]  {$\alpha _{X}$};
\draw (357.67,11.61) node [anchor=north west][inner sep=0.75pt]  [font=\footnotesize,color={rgb, 255:red, 208; green, 2; blue, 27 }  ,opacity=1 ]  {$f_{\omega }( \alpha _{X})$};
\draw (436.04,37.93) node [anchor=north west][inner sep=0.75pt]  [font=\footnotesize,color={rgb, 255:red, 208; green, 2; blue, 27 }  ,opacity=1 ]  {$f_{\omega }^{\star }( \alpha _{X})$};
\draw (178.36,160.27) node [anchor=north west][inner sep=0.75pt]  [font=\footnotesize]  {$b_{X}$};
\draw (424.25,189.85) node [anchor=north west][inner sep=0.75pt]  [font=\footnotesize]  {$f_{\omega }( b_{X})$};
\draw (427.81,136.36) node [anchor=north west][inner sep=0.75pt]  [font=\footnotesize]  {$f_{\omega }^{\star }( b_{X})$};
\draw (76.89,111.1) node [anchor=north west][inner sep=0.75pt]    {$x$};
\draw (105.67,166.88) node [anchor=north west][inner sep=0.75pt]    {$p$};
\draw (351.54,151.44) node [anchor=north west][inner sep=0.75pt]  [font=\scriptsize]  {$y'$};
\draw (209.06,79.86) node [anchor=north west][inner sep=0.75pt]    {$f_{\omega }$};
\draw (319.45,96.66) node [anchor=north west][inner sep=0.75pt]  [font=\scriptsize]  {$n_{1}$};
\draw (106.26,95.3) node [anchor=north west][inner sep=0.75pt]  [font=\scriptsize,color={rgb, 255:red, 208; green, 2; blue, 27 }  ,opacity=1 ,rotate=-321.74,xslant=0.04]  {$\rho$ --Morse};
\draw (370.14,60.94) node [anchor=north west][inner sep=0.75pt]  [font=\scriptsize,color={rgb, 255:red, 208; green, 2; blue, 27 }  ,opacity=1 ,rotate=-321.74,xslant=0.04]  {$\rho '$--Morse};
\draw (120.23,169.54) node [anchor=north west][inner sep=0.75pt]  [font=\scriptsize,color={rgb, 255:red, 0; green, 0; blue, 0 }  ,opacity=1 ,rotate=-0.49,xslant=0.04]  {$\kappa$ --Morse};
\draw (400.49,155.66) node [anchor=north west][inner sep=0.75pt]  [font=\scriptsize,color={rgb, 255:red, 0; green, 0; blue, 0 }  ,opacity=1 ,rotate=-14.2,xslant=0.04]  {$\kappa '$--Morse};
\draw (286.89,54.87) node [anchor=north west][inner sep=0.75pt]  [color={rgb, 255:red, 144; green, 19; blue, 254 }  ,opacity=1 ]  {$r'$};
\draw (42.62,89.26) node [anchor=north west][inner sep=0.75pt]  [color={rgb, 255:red, 144; green, 19; blue, 254 }  ,opacity=1 ]  {$R$};
\draw (160.51,15.06) node [anchor=north west][inner sep=0.75pt]  [color={rgb, 255:red, 65; green, 117; blue, 5 }  ,opacity=1 ]  {$X$};
\draw (304.75,10.43) node [anchor=north west][inner sep=0.75pt]  [color={rgb, 255:red, 65; green, 117; blue, 5 }  ,opacity=1 ]  {$X_{\omega }$};
\draw (209.1,125.03) node [anchor=north west][inner sep=0.75pt]  [font=\footnotesize]  {$f_{\omega }^{\star }$};

\end{tikzpicture}

\caption{ $f_\omega^{\star}$ is an open map.}
\end{figure}
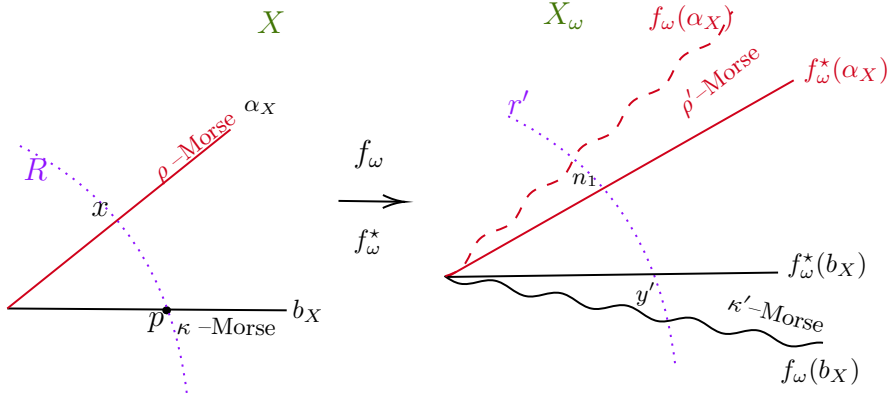

Now we are ready to consider any given  $\alpha \in \bfa_X$ be a $(\qq, \sQ)$--quasi-geodesic in $X$, where
\begin{enumerate}
\item $\qq, \sQ$ is such that  $\mm_{b_X}(\qq, \sQ)$ is small compared to $\rr$ with respect to $\kappa$; and
\item $f_\omega(\alpha)$ is a ray that is associated with an element in $\calU_{\kappa'}(\bfb_Y, \rr')$. 
\end{enumerate}
By the choice of $\rr'$, 
$n$ is small compared to $\rr'$. Hence, 
\[
(f_\omega^\star)(\alpha_X)=\alpha_Y|_{\rr'} \subset \calN_{\kappa'}\big(b_Y, n\big)
\]

Pick a point $x \in \alpha_X|_{\sR}$ such that $||x||=R$ in $X$. Lets consider $d(x,b_X)$ then  $d(x,b_X)=d(x,p)$ where $p \in \Pi_{b_X}(x)$. Then if $[x,p]_\omega$ is the $\omega$-geodesic joining $x$ and $p$. Then the pre image path namely, $\beta$ satisfy, $d(\go,\beta) \leq ||x||=R \leq R \cdot \ell(\beta)$. Therefore by Proposition ~\ref{BT1}, we have
\[d(x,b_X) \leq C(R)^{-1}(d_\omega(x,b_X)  + r_o)\]


\begin{align*} 
d_X(x, b_X) 
&\leq C(R)^{-1}(d_\omega(x,f_\omega(b_X))  + r_o)  \\
& \leq C(R)^{-1}[d_\omega(x,\yy)+d_\omega(\yy,\yy')+d_\omega(\yy',f_\omega(b_X)] \text{\hspace{0.3cm} (where, } \yy \in \Pi_{\alpha_Y}(x), \yy' \in \Pi_{b_Y}(x))\\
& \leq C(R)^{-1}[\nn_1 \cdot \rho'(||x||_\omega)+n \cdot \kappa'(||\yy||_\omega)+\mm_{f_\omega(b_X)}(1,0) \cdot \kappa'(||\yy'||_\omega)+r_0(\omega)]\\
& \leq   C(R)^{-1}[\nn_1  +2n+4\mm_{f_\omega(b_X)}(1,0)+r_0(\omega)] \cdot \calA(||x||_\omega)
\end{align*} 
where, $\calA=\max\{\rho',\kappa'\}$.
We also have 
\[
\calA(||x||_\omega) \leq b'\cdot \calA(||x||)+r_o'  \leq (b'+r_0'(\omega)) \calA(||x||)
\]
Note that by Proposition~\ref {extension},  $C(\param)$ is a decreasing function, hence, combining the preceding inequalities, we have,

\begin{align*} 
d_X(x, b_X) & \leq C(R)^{-1}[\nn_1  +2n+4\mm_{f_\omega(b_X)}(1,0)+r_0(\omega)] \cdot \calA(||x||_\omega)\\
& \leq C(R)^{-1}[\nn_1  +2n+4\mm_{f_\omega(b_X)}(1,0)+r_0(\omega)] \cdot (b'+r_0'(\omega) \calA(||x||)\\
& \leq C(r)^{-1}[\nn_1 +2n+4\mm_{f_\omega(b_X)}(1,0)+r_0(\omega)] \cdot (b'+r_0'(\omega) \calA(||x||)\\
& \leq \nn \cdot \calA(||x||)
\end{align*}

Now, Definition ~\ref{Def:Morse} implies that 
\[
\alpha_X|_\rr \subset \calN_\kappa(b_X, \mm_{b_X}(1,0)). 
\]
Therefore, $\bfa_X \in \calU_{\kappa}(\bfb_X, \rr)$ and
\[
(f_\omega^\star)^{-1} \big(\calU_{s}(\bfb_Y, \rr')\big) \subset \calU_{s}(\bfb_X, \rr). 
\]

For the other direction, it is equivalent to saying, the $(f_\omega^\star)^{-1}$ is open. Fix a $r >0$, then we aim to show that, for any  $\calU_\kappa(\bfb_Y,r)$ the pre image $(f_\omega^\star)^{-1}(\calU_\kappa(\bfb_Y,r))$ is open in $\partial_sX$. That is, for every $\bfa_X\in (f_\omega^\star)^{-1}(\calU_\kappa(\bfb_Y,r)) $ there exists a $r'$, such that $\calU_s(\bfa_X,r') \subset (f_\omega^\star)^{-1}(\calU_\kappa(\bfb_Y,r)).$ Without loss of generality, suppose $f_\omega^\star(\bfa_X)=\bfa_Y.$ \newline
Let $R(b_Y,r,\nn, \rho')$ then choose $r'$ such that:
\begin{enumerate}
    
    \item $r' \geq \frac{R+r_1}{c}$, where $r_1, c$ are the constant from Proposition ~\ref{BT2}.
    \item $\nn'$ is small compared to $r'$.
    \end{enumerate}
Suppose, $\bfb_X \in \calU_s(\bfa_X,r')$ and $b_X$ be a geodesic representative of $\bfb_X \in \partial_sX$ and $f_\omega^\star(\bfb_X)=\bfb_Y.$ 
Suppose $a_Y$ and $b_Y$ are geodesic representatives of $\bfa_Y$ and $\bfb_Y$ in $\partial_sX_{\omega}$ respectively.  

Suppose, $y$ be a point on $a_Y$ with $||y||_\omega=R$, and let $y' \in \Pi_y(\omega(a_X))$. Now, $y''$ be the nearest point projection of $y'$ on $a_X$ in $X$. Then,
\begin{align*}
  d_\omega(y, b_Y) & \leq d_\omega(y,y')+d_\omega(y',y'')+d_\omega(y'',b_Y)\\
  &\leq m_{b_Y}(1,0) \cdot \kappa'(R)+ 2b\cdot m_{a_X}(1,0) \cdot \rho(||y'||)+ r_0(\omega)+ n_1 \cdot \rho'(||y''||_\omega)  
\end{align*}
Similarly like the previous proof there exists $\nn'$ and a sublinear function $\eta$, we have \[d_\omega(y, b_Y) \leq \nn' \cdot \eta(R)\].
\[
a_Y|_\rr \subset \calN_\kappa(b_Y, \mm_{b_Y}(1,0)). 
\]
Hence, $f_\omega^*$ is continuous. 
\end{proof} 

Lastly, we would like to quickly observe that since sublinearly Morse boundaries are visibility spaces \cite[Theorem C]{DZ}, for any two directions $\alpha, \beta \in \pka X$, there exists a bi-infinite geodesic line in $X_\omega$ that converges to them. This is also an observation analogous to \cite[Theorem 1.13]{BM2}.
\begin{corollary}
    Let $X$ be an infinite graph with uniformly bounded degree. For almost every $\omega$, consider any two distinct element in $\pka X$,  there exists a bi-infinite geodesic line in $X_{\omega}$  that converges to these two directions in $\pka X_\omega$.
\end{corollary}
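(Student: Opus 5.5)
Given two distinct points $\bfa,\bfb\in\pka X$, the plan is to move them into $X_\omega$ via the homeomorphism of Theorem~\ref{invarianttopology}, apply the visibility property of sublinearly Morse boundaries there, and check that the resulting line does what we want.

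First, fix $\omega$ in the full-measure set where Propositions~\ref{BT1} and~\ref{BT2}, Theorem~\ref{37} and Theorem~\ref{invarianttopology} all hold. This is a countable intersection of full-measure events, so it still has full measure. Choose geodesic representatives $\alpha\in\bfa$ and $\beta\in\bfb$ starting at $\go$. Let $\alpha_0=f^\star_\omega(\alpha)$ and $\beta_0=f^\star_\omega(\beta)$ be the geodesic representatives constructed in Section~\ref{Remark1}. By Definition~\ref{maponray} (via \cite[Lemma 4.2]{QRT2}) both are sublinearly Morse geodesic rays in $X_\omega$, say $\kappa_1$-Morse for a common sublinear $\kappa_1$, obtained by taking the maximum of the two functions. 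Lemma~\ref{oneone} says $f^\star_\omega$ is injective, so $[\alpha_0]\neq[\beta_0]$ in $\ps X_\omega$.

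Second, apply the visibility theorem \cite[Theorem C]{DZ} to the proper geodesic space $X_\omega$ and the distinct classes $[\alpha_0],[\beta_0]\in\partial_{\kappa_1}X_\omega$. This produces a bi-infinite geodesic line $\ell\subset X_\omega$ whose two ends lie in the classes $[\alpha_0]$ and $[\beta_0]$. Here I need $X_\omega$ to be proper and geodesic. Balls in $X_\omega$ are finite because Proposition~\ref{BT2} forces $d_\omega$ to grow at least linearly in $d$ outside a bounded set, and $X$ has bounded degree. With finite balls, infima of path lengths between vertices are attained, so after including edges as segments $X_\omega$ is a proper geodesic space. If \cite{DZ} is stated only for geodesic spaces with a group action, I would instead reprove visibility directly. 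Take geodesics $[\alpha_0(n),\beta_0(n)]_\omega$. By the $\kappa_1$-Morse property and the fact that $\alpha_0,\beta_0$ are not sublinearly fellow travelling, these geodesics pass within sublinear distance, hence within bounded distance for the relevant scale, of a fixed compact set near $\go$. Arzel\`a--Ascoli then gives a limiting line. Its halves lie in sublinear neighborhoods of $\alpha_0$ and $\beta_0$ by the weak Morse condition II applied to the tails.

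Third, I would translate the conclusion back. By Theorem~\ref{invarianttopology} the identification $\ps X\simeq\ps X_\omega$ is a homeomorphism sending $\bfa\mapsto[\alpha_0]$ and $\bfb\mapsto[\beta_0]$. So the line $\ell$ converges to the directions corresponding to $\bfa$ and $\bfb$, which is how I would read ``converges to these two directions in $\pka X_\omega$.''

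I expect the main obstacle to be the second step, specifically the case where I reprove visibility. There I need a uniform bound showing that the approximating geodesics $[\alpha_0(n),\beta_0(n)]_\omega$ do not escape to infinity. Without it the limiting line might not exist or might not pass near $\go$. My approach is to argue by contradiction. If these geodesics escaped, the concatenation of $\alpha_0|_{[0,n]}$, the geodesic, and $\beta_0|_{[0,n]}$ would force $\alpha_0$ and $\beta_0$ to $\kappa_1$-fellow travel up to radii tending to infinity. That would give sublinear fellow travelling, contradicting $[\alpha_0]\neq[\beta_0]$.
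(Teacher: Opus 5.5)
Your proposal is correct and follows the paper's proof essentially step for step. Injectivity of $f^\star_\omega$ (the paper cites the homeomorphism of Theorem~\ref{invarianttopology}, you cite Lemma~\ref{oneone}) gives two distinct sublinearly Morse classes in $X_\omega$, the visibility theorem of \cite{DZ} produces a bi-infinite geodesic between them, and its ends are read back through $f^\star_\omega$. Your check via Proposition~\ref{BT2} that $X_\omega$ is proper and geodesic is a detail the paper leaves implicit, and the fallback visibility sketch is unnecessary; also, Theorem~\ref{37} and Proposition~\ref{BT1} hold ray by ray and path by path, so listing them does not give a countable intersection, and you should simply fix $\omega$ in the full-measure set supplied by Theorem~\ref{invarianttopology}.
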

\begin{proof}

Fix distinct $\xi_1,\xi_2\in \partial_s X$. For almost every $\omega$, the map $f_\omega^\star:\partial_s X\to \partial_s X_\omega$ is a homeomorphism, hence
$f_\omega^\star(\xi_1)\neq f_\omega^\star(\xi_2)$. Choose $\kappa$-Morse
quasi-geodesic rays $\alpha_1^\omega,\alpha_2^\omega$ in $X_\omega$, both based
at $\go$, representing $f_\omega^\star(\xi_1)$ and $f_\omega^\star(\xi_2)$,
respectively. Since these boundary points are distinct, the rays
$\alpha_1^\omega$ and $\alpha_2^\omega$ do not $\kappa$-fellow travel. By the
Theorem 2.13 in \cite{DZ}, there exists a bi-infinite geodesic line
$\beta_\omega:\mathbb{R}\to X_\omega$ whose two half-rays are $\kappa$-equivalent
to $\alpha_1^\omega$ and $\alpha_2^\omega$. Hence the two ends of $\beta_\omega$
represent $f_\omega^\star(\xi_1)$ and $f_\omega^\star(\xi_2)$.

\end{proof}

\bibliographystyle{alpha}

\end{document}